\theoremstyle{plain} 
\newtheorem{theorem}{Theorem}
\newtheorem*{theorem*}{Theorem}
\newtheorem{prop}[theorem]{Proposition}
\newtheorem*{prop*}{Proposition}
\newtheorem{lemma}[theorem]{Lemma}
\newtheorem*{lemma*}{Lemma}
\newtheorem{cor}[theorem]{Corollary}
\newtheorem*{cor*}{Corollary}
\newtheorem*{example*}{Example}
\newtheorem*{axiom*}{Axiom}
\newtheorem*{problem*}{Problem}
\newtheorem*{summary*}{Summary}
\newtheorem*{guide*}{Guide}
\theoremstyle{definition} 
\newtheorem{definition}[theorem]{Definition}
\newtheorem*{definition*}{Definition}
\theoremstyle{definition} 
\newtheorem{remark}[theorem]{Remark}
\newtheorem*{remark*}{Remark}
\numberwithin{theorem}{section}
\numberwithin{equation}{section}
\numberwithin{figure}{section}
\numberwithin{table}{section}
\renewenvironment{proof}[1][\proofname]{\par
  \normalfont
  \topsep6\p@\@plus6\p@ \trivlist
  \item[\hskip\labelsep{\bfseries #1}\@addpunct{\bfseries.}]\ignorespaces
}{%
  \endtrivlist
}
\renewcommand{\proofname}{Proof}
\def\BOXSYMBOL{\RIfM@\bgroup\else$\bgroup\aftergroup$\fi
  \vcenter{\hrule\hbox{\vrule height.85em\kern.6em\vrule}\hrule}\egroup}
\newcommand{\BOX}{%
  \ifmmode\else\leavevmode\unskip\penalty9999\hbox{}\nobreak\hfill\fi
  \quad\hbox{\BOXSYMBOL}}
\begin{document}
\title{\textbf{Elliptic Ding-Iohara Algebra and \\ the Free Field Realization of \\ the Elliptic Macdonald Operator}}
\author{\textbf{Yosuke Saito} \\ Mathematical Institute, Tohoku University, \\ Sendai, Japan.}
\maketitle
\begin{abstract}
The Ding-Iohara algebra [15][16][17] is a quantum algebra arising from the free field realization of the Macdonald operator. Starting from the elliptic kernel function introduced by Komori, Noumi and Shiraishi [18], we can define an elliptic analog of the Ding-Iohara algebra. The free field realization of the elliptic Macdonald operator is also constructed.
\end{abstract}
\tableofcontents
\pagestyle{headings}

\vskip 0.5cm
\textbf{Notations.} In this paper, we use the following symbols.
\begin{align*}
&\mathbf{Z} : \text{The set of integers}, \quad \mathbf{Z}_{\geq{0}}:=\{0,1,2,\cdots\}, \quad \mathbf{Z}_{>0}:=\{1, \, 2, \, \cdots\}, \\
&\mathbf{Q} : \text{The set of rational numbers},  \quad \mathbf{Q}(q,t) : \text{The field of rational functions of $q,\,t$ over $\mathbf{Q}$}, \\
&\mathbf{C} : \text{The set of complex numbers}, \quad \mathbf{C}^{\times}:=\mathbf{C}\setminus\{0\}, \\
&\mathbf{C}[[z,z^{-1}]] : \text{The set of formal power series of $z, \, z^{-1}$ over $\mathbf{C}$}, \\
&\text{The $q$-infinite product} : (x;q)_{\infty}:{=}\prod_{n\geq{0}}(1-xq^{n}) \,\, (|q|<1), \quad (x;q)_{n}:{=}\frac{(x;q)_{\infty}}{(q^{n}x;q)_{\infty}} \,\, (n\in{\mathbf{Z}}), \\
&\text{The theta function} : \Theta_{p}(x):=(p;p)_{\infty}(x;p)_{\infty}(px^{-1};p)_{\infty}, \\
&\text{The double infinite product} : (x;q,p)_{\infty}:=\prod_{m,n\geq{0}}(1-xq^{m}p^{n}), \\
&\text{The elliptic gamma function} : \Gamma_{q,p}(x):=\frac{(qpx^{-1};q,p)_{\infty}}{(x;q,p)_{\infty}}.
\end{align*} 
For the theta function and the elliptic gamma function, the following relations hold.
\begin{align*}
&\Theta_{p}(x)=-x\Theta_{p}(x^{-1}), \quad \Theta_{p}(px)=-x^{-1}\Theta_{p}(x), \\
&\Gamma_{q,p}(qx)=\frac{\Theta_{p}(x)}{(p;p)_{\infty}}\Gamma_{q,p}(x), \quad \Gamma_{q,p}(px)=\frac{\Theta_{q}(x)}{(q;q)_{\infty}}\Gamma_{q,p}(x).
\end{align*}

\section{Introduction}
The aims of this paper are to introduce an elliptic analog of the Ding-Iohara algebra and to construct the free field realization of the elliptic Macdonald operator. We accomplish them by starting from the elliptic kernel function defined below. Let us explain backgrounds and some motivations. 

The relation between quantum algebras and the Macdonald symmetric functions has been studied by several authors. In these works, one of the most remarkable work is the construction of the $q$-Virasoro algebra and the $q$-$W_{N}$ algebra by Awata, Odake, Kubo, and Shiraishi [10][11][12]. It is known that the singular vectors of the Virasoro algebra and the $W_{N}$ algebra correspond to the Jack symmetric functions [8]. On the other hand, the Macdonald symmetric functions are $q$-analog of the Jack symmetric functions [2][3]. Then Awata, Odake, Kubo, and Shiraishi constructed the $q$-Virasoro algebra and the $q$-$W_{N}$ algebra whose singular vectors correspond to the Macdonald symmetric functions :
\begin{align*}
\begin{matrix}
\text{$q$-Virasoro algebra, $q$-$W_{N}$ algebra} & \xrightarrow[]{\text{singular vector}}  &\text{Macdonald symmetric functions} \\
\uparrow \text{\footnotesize{$q$-deformation}} & {} &\uparrow \text{\footnotesize{$q$-deformation}} \\
\text{Virasoro algebra, $W_{N}$ algebra} & \xrightarrow[]{\text{singular vector}} &\text{Jack symmetric functions}
\end{matrix}
\end{align*}

In the middle of 2000's, another stream occurs from the free field realization of the Macdonald operator. The Macdonald operator $H_{N}(q,t)$ $(N\in\mathbf{Z}_{>0})$ is defined by
\begin{align}
H_{N}(q,t):=\sum_{i=1}^{N}\prod_{j\neq{i}}\frac{tx_{i}-x_{j}}{x_{i}-x_{j}}T_{q,x_{i}} \quad (T_{q,x_{i}}f(x_{1},\cdots, x_{N}):=f(x_{1},\cdots,qx_{i},\cdots,x_{N}))
\end{align}
and the free field realization of the Macdonald operator tells us that we can reproduce the operator by boson operators. As we will see in section 2, the free field realization of the Macdonald operator is based on the form of the kernel function defined by 
\begin{align}
\Pi(q,t)(x,y):=\prod_{i,j}\frac{(tx_{i}y_{j};q)_{\infty}}{(x_{i}y_{j};q)_{\infty}}.
\end{align}
It has been realized that from the free field realization of the Macdonald operator, a certain quantum algebra arises, \textit{the Ding-Iohara algebra} [15][16][17]. Recently this algebra has been applied to several  materials of mathematical physics, such as  the AGT conjecture [19][20][21], as well as the refined topological vertex which is used to calculate amplitudes and partition functions in the topological string theory [22].

On the other hand, in the elliptic theory side it is well-known that the Macdonald operator allows the elliptic analog defined by [1],
\begin{align}
H_{N}(q,t,p):=\sum_{i=1}^{N}\prod_{j\neq{i}}\frac{\Theta_{p}(tx_{i}/x_{j})}{\Theta_{p}(x_{i}/x_{j})}T_{q,x_{i}},
\end{align}
as well as the kernel function for this operator introduced by Komori, Noumi, and Shiraishi [18] :
\begin{align}
\Pi(q,t,p)(x,y):=\prod_{i,j}\frac{\Gamma_{q,p}(x_{i}y_{j})}{\Gamma_{q,p}(tx_{i}y_{j})}.
\end{align}
Since the free field realization of the Macdonald operator is available, that the above operator (1.3) can be derived from the free field realization will be a natural expect. In [17], Feigin, Hashizume, Hoshino, Shiraishi, Yanagida constructed the free field realization of the elliptic Macdonald operator and an elliptic analog of the Ding-Iohara algebra based on the idea of the quasi-Hopf twist. It is crucial that the authors of [17] noticed if one want to treat the elliptic Macdonald operator in the context of the free field realization, the Ding-Iohara algebra should become elliptic. However it is not clear whether the materials treated in [17] have connections to the elliptic kernel function. Hence the following problem remained open :
\begin{quote}
Construct the free field realization of the elliptic Macdonald operator $H_{N}(q,t,p)$ and the elliptic Ding-Iohara algebra which have connections to the elliptic kernel function $\Pi(q,t,p)(x,y)$.
\end{quote}

Our strategy to solve the above problem is the following. Since the free field realization of the Macdonald operator is based on the form of the kernel function, it is plausible that one can construct the free field realization of the elliptic Macdonald operator from the elliptic kernel function. It turns out this leads to another elliptic analog of the Ding-Iohara algebra :
\begin{align*}
\begin{matrix}
\genfrac{}{}{0pt}{0}{\text{Elliptic Macdonald operator}}{H_{N}(q,t,p)} & \xrightarrow[]{\textbf{free field realization !}}& \genfrac{}{}{0pt}{0}{\textbf{Elliptic Ding-Iohara algebra}}{\bm{\mathcal{U}(q,t,p)}} \\
\bigg{\uparrow}\text{\footnotesize{elliptic deformation}} & {} & \bigg{\uparrow}\text{\footnotesize{\textbf{elliptic deformation !}}} \\
\text{Macdonald operator} \, \, H_{N}(q,t) & \xrightarrow[]{\text{\quad free field realization \quad}}& \text{Ding-Iohara algebra} \,\, \mathcal{U}(q,t)
\end{matrix}
\end{align*}

Our main results are as follows.

\medskip
\begin{definition}[Definition 3.9 in section 3) (\textbf{Elliptic Ding-Iohara algebra $\mathcal{U}(q,t,p)$}] 
Let us define the structure function $g_{p}(x)$ by
\begin{align*}
g_{p}(x):=\frac{\Theta_{p}(qx)\Theta_{p}(t^{-1}x)\Theta_{p}(q^{-1}tx)}{\Theta_{p}(q^{-1}x)\Theta_{p}(tx)\Theta_{p}(qt^{-1}x)}.
\end{align*}
Here we have used the notation in page 2, and assume $|q|<1$, $|p|<1$. We define the elliptic Ding-Iohara algebra $\mathcal{U}(q,t,p)$ to be the associative $\mathbf{C}$-algebra generated by $\{x^{\pm}_{n}(p)\}_{n\in{\mathbf{Z}}}$, $\{\psi^{\pm}_{n}(p)\}_{n\in{\mathbf{Z}}}$ and $\gamma$ subject to the following relation : we set $\gamma$ as the central, invertible element and currents to be $x^{\pm}(p;z):=\sum_{n\in{\mathbf{Z}}}x^{\pm}_{n}(p)z^{-n}$, $\psi^{\pm}(p;z):=\sum_{n\in{\mathbf{Z}}}\psi^{\pm}_{n}(p)z^{-n}$.
\begin{align*}
&\hskip 1cm [\psi^{\pm}(p;z), \psi^{\pm}(p;w)]=0, \quad \psi^{+}(p;z)\psi^{-}(p;w)=\frac{g_{p}(\gamma z/w)}{g_{p}(\gamma^{-1}z/w)}\psi^{-}(p;w)\psi^{+}(p;z),\notag\\
&\hskip 3cm \psi^{\pm}(p;z)x^{+}(p;w)=g_{p}\Big(\gamma^{\pm\frac{1}{2}}\frac{z}{w}\Big)x^{+}(p;w)\psi^{\pm}(p;z),\notag\\
&\hskip 3cm \psi^{\pm}(p;z)x^{-}(p;w)=g_{p}\Big(\gamma^{\mp\frac{1}{2}}\frac{z}{w}\Big)^{-1}x^{-}(p;w)\psi^{\pm}(p;z),\notag\\
&\hskip 3cm x^{\pm}(p;z)x^{\pm}(p;w)=g_{p}\Big(\frac{z}{w}\Big)^{\pm 1}x^{\pm}(p;w)x^{\pm}(p;z),\notag\\
&[x^{+}(p;z),x^{-}(p;w)]
=\frac{\Theta_{p}(q)\Theta_{p}(t^{-1})}{(p;p)_{\infty}^{3}\Theta_{p}(qt^{-1})}\bigg\{\delta\Big(\gamma\frac{w}{z}\Big)\psi^{+}(p;\gamma^{1/2}w)-\delta\Big(\gamma^{-1}\frac{w}{z}\Big)\psi^{-}(p;\gamma^{-1/2}w)\bigg\},
\end{align*}
where we set the delta function $\delta(z):=\sum_{n\in{\mathbf{Z}}}z^{n}$.
\end{definition}

The free field realization of the elliptic Ding-Iohara algebra $\mathcal{U}(q,t,p)$ is constructed as follows. First for the theta function $\Theta_{p}(x)$ one can check the following :
\begin{align*}
\Theta_{p}(x) \xrightarrow[p \to 0]{} 1-x.
\end{align*}
On the other hand, we can rewrite $1-x$ and $\Theta_{p}(x)$ as follows :
\begin{align*}
1-x &=\exp\Big(\log(1-x)\Big)=\exp\bigg(-\sum_{n>0}\frac{x^{n}}{n}\bigg) \quad (|x|<1), \\
\Theta_{p}(x) &=(p;p)_{\infty}(x;p)_{\infty}(px^{-1};p)_{\infty} \\
&=(p;p)_{\infty}\exp\Big(\log(x;p)_{\infty}(px^{-1};p)_{\infty}\Big) \\
&=(p;p)_{\infty}\exp\bigg(-\sum_{n>0}\frac{p^{n}}{1-p^{n}}\frac{x^{-n}}{n}\bigg)\exp\bigg(-\sum_{n>0}\frac{1}{1-p^{n}}\frac{x^{n}}{n}\bigg) \quad (|p|<|x|<1).
\end{align*}
From these expressions, one can notice a procedure of the elliptic deformation as follows :
\begin{align*}
1-x &=\exp\bigg(-\sum_{n>0}\frac{x^{n}}{n}\bigg) 
\xrightarrow[\genfrac{}{}{0pt}{1}{\text{elliptic}}{\text{deformation}}]{} \exp\bigg(-\sum_{n>0}\frac{p^{n}}{1-p^{n}}\frac{x^{-n}}{n}\bigg)\exp\bigg(-\sum_{n>0}\frac{1}{1-p^{n}}\frac{x^{n}}{n}\bigg) \\
&=\frac{\Theta_{p}(x)}{(p;p)_{\infty}}.
\end{align*}
We can also recognize the above process as follows :
\begin{align*}
&(1) \, \text{Take the substitution as} \quad 1-x=\exp\bigg(-\sum_{n>0}\frac{x^{n}}{n}\bigg) \to \exp\bigg(-\sum_{n>0}\frac{1}{1-p^{n}}\frac{x^{n}}{n}\bigg). \\
&(2) \, \text{Multiply the above by the negative power part of $x$ as} \\
&\exp\bigg(-\sum_{n>0}\frac{1}{1-p^{n}}\frac{x^{n}}{n}\bigg) \to \exp\bigg(-\sum_{n>0}\frac{p^{n}}{1-p^{n}}\frac{x^{-n}}{n}\bigg)\exp\bigg(-\sum_{n>0}\frac{1}{1-p^{n}}\frac{x^{n}}{n}\bigg)=\frac{\Theta_{p}(x)}{(p;p)_{\infty}}.
\end{align*}
As is shown in this paper, for boson operators the procedure of the elliptic deformation similar to the above process is available (for example, see the proposition 3.1 in section 3 or the definition 5.1 in section 5). Using two sets of boson generators, we can reproduce the theta function and the elliptic gamma function from OPE (Operator Product Expansion) of boson operators. Consequently, we have the following.

\begin{theorem}[Theorem 3.8 in section 3) \, (\textbf{Free field realization of the elliptic Ding-Iohara algebra $\mathcal{U}(q,t,p)$}] 
Let us define an algebra $\mathcal{B}_{a,b}$ of bosons : it is generated by $a=\{a_{n}\}_{n\in{\mathbf{Z}\setminus\{0\}}}$, $b=\{b_{n}\}_{n\in{\mathbf{Z}\setminus\{0\}}}$ with the following relations :
\begin{align*}
&[a_{m},a_{n}]=m(1-p^{|m|})\frac{1-q^{|m|}}{1-t^{|m|}}\delta_{m+n,0}, \quad [b_{m},b_{n}]=m\frac{1-p^{|m|}}{(qt^{-1}p)^{|m|}}\frac{1-q^{|m|}}{1-t^{|m|}}\delta_{m+n,0}, \\
&[a_{m},b_{n}]=0.
\end{align*}
We define the boson Fock space $\mathcal{F}$ to be the left $\mathcal{B}_{a,b}$ module generated by the vacuum vector $|0 \rangle$ which satisfies $a_{n}|0 \rangle=b_{n}|0 \rangle=0 \, (n>0)$ :
\begin{align*}
\mathcal{F}=\text{\rm span}\{a_{-\lambda}b_{-\mu}|0 \rangle : \lambda, \mu\in{\mathcal{P}}\},
\end{align*}
where $\mathcal{P}$ denotes the set of partitions and $a_{-\lambda}:=a_{-\lambda_{1}}\cdots a_{-\lambda_{\ell(\lambda)}}$ for a partition $\lambda$. Set $\gamma:=(qt^{-1})^{-1/2}$ and define operators $\eta(p;z),\,\xi(p;z),\,\varphi^{\pm}(p;z) : \mathcal{F} \to \mathcal{F}\otimes\mathbf{C}[[z,z^{-1}]]$ as follows :
\begin{align*}
&\eta(p;z):=\bm{:}\exp\bigg(-\sum_{n\neq{0}}\frac{1-t^{-n}}{1-p^{|n|}}p^{|n|}b_{n}\frac{z^{n}}{n}\bigg)\exp\bigg(-\sum_{n\neq{0}}\frac{1-t^{n}}{1-p^{|n|}}a_{n}\frac{z^{-n}}{n}\bigg)\bm{:}, \\
&\xi(p;z)
:=\bm{:}\exp\bigg(\sum_{n\neq{0}}\frac{1-t^{-n}}{1-p^{|n|}}\gamma^{-|n|}p^{|n|}b_{n}\frac{z^{n}}{n}\bigg)\exp\bigg(\sum_{n\neq{0}}\frac{1-t^{n}}{1-p^{|n|}}\gamma^{|n|}a_{n}\frac{z^{-n}}{n}\bigg)\bm{:}, \\
&\varphi^{+}(p;z):=\bm{:}\eta(p;\gamma^{1/2}z)\xi(p;\gamma^{-1/2}z)\bm{:}, \quad \varphi^{-}(p;z):=\bm{:}\eta(p;\gamma^{-1/2}z)\xi(p;\gamma^{1/2}z)\bm{:}.
\end{align*}
Then the map defined by
\begin{align*}
x^{+}(p;z) \mapsto \eta(p;z), \quad x^{-}(p;z) \mapsto \xi(p;z), \quad \psi^{\pm}(p;z) \mapsto \varphi^{\pm}(p;z)
\end{align*}
gives a representation of the elliptic Ding-Iohara algebra $\mathcal{U}(q,t,p)$.
\end{theorem}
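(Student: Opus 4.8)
The plan is to check that the assignment $x^{\pm}(p;z)\mapsto\eta(p;z),\xi(p;z)$, $\psi^{\pm}(p;z)\mapsto\varphi^{\pm}(p;z)$ sends each defining relation of $\mathcal{U}(q,t,p)$ to a valid identity of $\mathrm{End}(\mathcal{F})$-valued currents. The single tool I would use is the normal-ordering rule for vertex operators: for exponentials of linear combinations of the modes, $\bm{:}e^{A}\bm{:}\,\bm{:}e^{B}\bm{:}=e^{\langle A_{+}B_{-}\rangle}\bm{:}e^{A+B}\bm{:}$, where $\langle A_{+}B_{-}\rangle$ is the scalar obtained by contracting the annihilation modes ($n>0$) of the left factor against the creation modes ($n<0$) of the right factor through $[a_{m},a_{n}]$ and $[b_{m},b_{n}]$; because $[a_{m},b_{n}]=0$ the two boson sectors contract separately.

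First I would compute the four basic contraction functions defined by $\eta(p;z)\eta(p;w)=f_{\eta\eta}(z,w)\bm{:}\eta(p;z)\eta(p;w)\bm{:}$ and similarly for $f_{\xi\xi},f_{\eta\xi},f_{\xi\eta}$. The $a$-sector yields a series in $w/z$ and the $b$-sector, carrying the weights $p^{|n|}$ and $(qt^{-1}p)^{-|n|}$, yields a series in $z/w$; the decisive step is that, on summing and comparing with $\Theta_{p}(x)/(p;p)_{\infty}=\exp\big(-\sum_{n>0}\frac{p^{n}}{1-p^{n}}\frac{x^{-n}}{n}\big)\exp\big(-\sum_{n>0}\frac{1}{1-p^{n}}\frac{x^{n}}{n}\big)$ from page 2 (the elliptic deformation of bosons recorded in Proposition 3.1), each contraction function is recognized as a ratio of four theta functions, e.g. $f_{\eta\eta}(z,w)=\frac{\Theta_{p}(w/z)\Theta_{p}(qt^{-1}w/z)}{\Theta_{p}(qw/z)\Theta_{p}(t^{-1}w/z)}$ and, using $\gamma^{2}=q^{-1}t$, $f_{\eta\xi}(z,w)=\frac{\Theta_{p}(\gamma qw/z)\Theta_{p}(\gamma t^{-1}w/z)}{\Theta_{p}(\gamma w/z)\Theta_{p}(\gamma^{-1}w/z)}$. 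I would stress that these are equalities of meromorphic functions: each series converges only in its own annulus, so every relation among currents is to be read after analytic continuation.

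The exchange relations then drop out by elementary theta-algebra. Forming $f_{\eta\eta}(z,w)/f_{\eta\eta}(w,z)$ and applying the reflection formula $\Theta_{p}(x^{-1})=-x^{-1}\Theta_{p}(x)$ repeatedly collapses the eight theta factors to exactly $g_{p}(z/w)$, which is the relation for $x^{+}$; since a direct computation gives $f_{\xi\xi}(z,w)=f_{\eta\eta}(w,z)$, the same manipulation produces $g_{p}(z/w)^{-1}$ for $x^{-}$. The $\psi$-$x$ and $\psi$-$\psi$ relations follow identically after substituting the $\gamma$-shifted arguments dictated by $\varphi^{\pm}(p;z)=\bm{:}\eta(p;\gamma^{\pm 1/2}z)\xi(p;\gamma^{\mp 1/2}z)\bm{:}$ into the four basic contractions and simplifying; in particular $f_{\eta\eta}(z,w)=f_{\xi\xi}(w,z)$ renders the $\psi^{\pm}$ self-contraction symmetric in $z,w$, giving $[\psi^{\pm}(p;z),\psi^{\pm}(p;w)]=0$.

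The genuine work is the commutator. I would write $[\eta(p;z),\xi(p;w)]=\big(f_{\eta\xi}(z,w)-f_{\xi\eta}(w,z)\big)\bm{:}\eta(p;z)\xi(p;w)\bm{:}$ and observe that $f_{\eta\xi}(z,w)$ and $f_{\xi\eta}(w,z)$ are expansions of one and the same meromorphic function $F(w/z)$ (the symmetry $F(s)=F(s^{-1})$ is what identifies them) in the complementary annuli $|z|>|w|$ and $|w|>|z|$. Their difference is therefore concentrated on the poles of $F$ separating the two regions, which — since $\gamma qt^{-1}=\gamma^{-1}$ — lie at $w/z=\gamma^{-1}$ and $w/z=\gamma$; by the $\sum_{n\in\mathbf{Z}}z^{n}=\delta(z)$ mechanism the difference becomes a combination of $\delta(\gamma w/z)$ and $\delta(\gamma^{-1}w/z)$ weighted by the residues of $F$. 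Evaluating those residues with $\lim_{y\to 1}(1-y)/\Theta_{p}(y)=1/(p;p)_{\infty}^{3}$ yields the scalar $\frac{\Theta_{p}(q)\Theta_{p}(t^{-1})}{(p;p)_{\infty}^{3}\Theta_{p}(qt^{-1})}$ at the first pole and precisely its negative at the second, the sign being forced by the reflection formula, while the attached normal-ordered product collapses to $\bm{:}\eta(p;\gamma w)\xi(p;w)\bm{:}=\varphi^{+}(p;\gamma^{1/2}w)$ and to $\bm{:}\eta(p;\gamma^{-1}w)\xi(p;w)\bm{:}=\varphi^{-}(p;\gamma^{-1/2}w)$ respectively. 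This reproduces the stated right-hand side. I expect this last step — the correct handling of the two expansion regions and of the relative sign between the two residues — to be the main obstacle, while all the exchange relations are a routine matter of summing geometric series and invoking the theta reflection identities.
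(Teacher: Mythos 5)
Your proposal is correct and follows essentially the same strategy as the paper: the paper likewise computes the OPE contraction functions by Wick's theorem (Propositions 3.3, 3.4, 3.7), identifies them as theta-function ratios, and extracts the delta-function terms of $[\eta(p;z),\xi(p;w)]$ supported at $w/z=\gamma^{\pm 1}$ — your complementary-annulus/residue argument is precisely the content of the paper's Lemma 3.5, whose identity $1/\Theta_{p}(x)+x^{-1}/\Theta_{p}(x^{-1})=\delta(x)/(p;p)_{\infty}^{3}$ formalizes your residue evaluation $\lim_{y\to 1}(1-y)/\Theta_{p}(y)=1/(p;p)_{\infty}^{3}$, including the relative sign between the two poles and the overall factor $\Theta_{p}(q)\Theta_{p}(t^{-1})/\Theta_{p}(qt^{-1})$. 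The only organizational difference is that for the relations involving $\varphi^{\pm}$ the paper reruns Wick's theorem on the explicit mode expansions of $\varphi^{\pm}$ and recognizes $g_{p}$ through its power-sum expansion (3.31), whereas you obtain the same relations by substituting $\gamma$-shifted arguments into the four basic contraction functions and applying the theta reflection formula — an equivalent bookkeeping of the same computation.
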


\begin{theorem}[Theorem 4.4 in section 4) (\textbf{Free field realization of the elliptic Macdonald operator}]
Let $\phi(p;z) : \mathcal{F} \to \mathcal{F}\otimes\mathbf{C}[[z,z^{-1}]]$ be an operator defined as 
\begin{align*}
\phi(p;z):=\exp\bigg(\sum_{n>0}\frac{(1-t^{n})(qt^{-1}p)^{n}}{(1-q^{n})(1-p^{n})}b_{-n}\frac{z^{-n}}{n}\bigg)\exp\bigg(\sum_{n>0}\frac{1-t^{n}}{(1-q^{n})(1-p^{n})}a_{-n}\frac{z^{n}}{n}\bigg).
\end{align*}
We use the notation $\phi_{N}(p;x):=\prod_{j=1}^{N}\phi(p;x_{j})$ $(N\in\mathbf{Z}_{>0})$. Then the operator $\eta(p;z)$ in the Theorem 1.2 and the operator $\phi(p;z)$ reproduce the elliptic Macdonald operator $H_{N}(q,t,p)$ as follows :
\begin{align*}
[\eta(p;z)-t^{-N}(\eta(p;z))_{-}(\eta(p;p^{-1}z))_{+}]_{1}\phi_{N}(p;x)|0 \rangle 
=\frac{t^{-N+1}\Theta_{p}(t^{-1})}{(p;p)_{\infty}^{3}}H_{N}(q,t,p)\phi_{N}(p;x)|0 \rangle,
\end{align*}
where $(\eta(p;z))_{\pm}$ stands for the plus and minus parts of $\eta(p;z)$ respectively as
\begin{align*}
(\eta(p;z))_{\pm}=\exp\bigg(-\sum_{\pm n>0}\frac{1-t^{-n}}{1-p^{|n|}}p^{|n|}b_{n}\frac{z^{n}}{n}\bigg)\exp\bigg(-\sum_{\pm n>0}\frac{1-t^{n}}{1-p^{|n|}}a_{n}\frac{z^{-n}}{n}\bigg)
\end{align*}
and $[f(z)]_{1}$ denotes the constant term of $f(z)$ in $z$.
\end{theorem}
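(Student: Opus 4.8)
The plan is to evaluate both operators in the bracket explicitly on $\phi_N(p;x)|0\rangle$, to show that their difference collapses to a sum of formal delta functions supported at $z=tx_i$, and then to read off the elliptic Macdonald operator by localizing the constant term. I would begin with the basic operator product expansion. Since $\phi(p;x)$ contains only the creation modes $a_{-n},b_{-n}$ $(n>0)$, whereas the annihilation part $(\eta(p;z))_+$ contains $a_{n},b_{n}$ $(n>0)$, commuting $(\eta(p;z))_+$ to the right past each $\phi(p;x_j)$ produces a scalar contraction. Computing the $a$- and $b$-contractions from the commutators of $\mathcal{B}_{a,b}$ and comparing with the exponential presentation $\Theta_p(x)/(p;p)_\infty=\exp(-\sum_{n>0}\frac{p^n}{1-p^n}\frac{x^{-n}}{n})\exp(-\sum_{n>0}\frac{1}{1-p^n}\frac{x^n}{n})$ recorded in the introduction, a direct calculation gives
\[
\eta(p;z)\phi_N(p;x)|0\rangle=\prod_{j=1}^N\frac{\Theta_p(x_j/z)}{\Theta_p(tx_j/z)}\,(\eta(p;z))_-\phi_N(p;x)|0\rangle,
\]
where the theta-quotient is expanded in the annulus lying just outside $|z|=|tx_j|$ that is dictated by $(\eta(p;z))_+$.

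Next I would treat the subtracted term. The same contraction with $(\eta(p;p^{-1}z))_+$ replaces each factor by $\Theta_p(px_j/z)/\Theta_p(tpx_j/z)$; applying the quasi-periodicity $\Theta_p(px)=-x^{-1}\Theta_p(x)$ to numerator and denominator shows this equals $t\,\Theta_p(x_j/z)/\Theta_p(tx_j/z)$, so the prefactor $t^{-N}$ exactly cancels the resulting $t^{N}$, and one obtains
\[
t^{-N}(\eta(p;z))_-(\eta(p;p^{-1}z))_+\phi_N(p;x)|0\rangle=\prod_{j=1}^N\frac{\Theta_p(x_j/z)}{\Theta_p(tx_j/z)}\,(\eta(p;z))_-\phi_N(p;x)|0\rangle,
\]
but now expanded in the annulus lying just inside $|z|=|tx_j|$, because the $p^{-1}$-shift moves the convergence region across that circle. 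Thus the two terms are the expansions of one and the same meromorphic function, sharing the residual operator $(\eta(p;z))_-\phi_N(p;x)|0\rangle$, in two adjacent annuli separated precisely by the simple poles $z=tx_i$.

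The crux is then to show that the difference of the ``outside'' and ``inside'' expansions is a sum of formal delta functions. Using $\Theta_p(w)=(p;p)_\infty^3(1-w)+O((1-w)^2)$ near $w=1$ together with the elementary identity $[(1-w)^{-1}]_{|w|<1}-[(1-w)^{-1}]_{|w|>1}=\delta(w)$, I would compute the residue of the theta-quotient at each $z=tx_i$: the non-singular factors contribute $\Theta_p(t^{-1})\prod_{j\neq i}\Theta_p(x_j/(tx_i))/\Theta_p(x_j/x_i)$, which the reflection $\Theta_p(x)=-x\Theta_p(x^{-1})$ rewrites as $t^{-(N-1)}\prod_{j\neq i}\Theta_p(tx_i/x_j)/\Theta_p(x_i/x_j)$, while the vanishing denominator yields the factor $\delta(tx_i/z)/(p;p)_\infty^3$. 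I expect this region-to-region comparison to be the main obstacle in the elliptic setting: because $\Theta_p$ has infinitely many zeros, one must verify that only the $k=0$ poles $z=tx_i$ (and none of the companions $z=tx_ip^{-k}$) lie between the two annuli, and that for the product over $j$ the crossing of $|z|=|tx_i|$ affects only the $i$-th factor. Genericity of the moduli $|x_j|$ and the explicit annuli found above are what make this bookkeeping clean.

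Finally I would assemble. Matching the creation-mode coefficients of $(\eta(p;z))_-$ and $\phi(p;x_i)$ shows that at $z=tx_i$ the creation part of $\eta$ merges with $\phi(p;x_i)$ into $\phi(p;qx_i)$, so that $(\eta(p;tx_i))_-\phi_N(p;x)|0\rangle=T_{q,x_i}\phi_N(p;x)|0\rangle$. Localizing the constant term by $[\delta(tx_i/z)h(z)]_1=h(tx_i)$ then gives
\[
\bigl[\eta(p;z)-t^{-N}(\eta(p;z))_-(\eta(p;p^{-1}z))_+\bigr]_1\phi_N(p;x)|0\rangle=\frac{t^{-N+1}\Theta_p(t^{-1})}{(p;p)_\infty^3}\sum_{i=1}^N\prod_{j\neq i}\frac{\Theta_p(tx_i/x_j)}{\Theta_p(x_i/x_j)}T_{q,x_i}\phi_N(p;x)|0\rangle,
\]
which is exactly $\frac{t^{-N+1}\Theta_p(t^{-1})}{(p;p)_\infty^3}H_N(q,t,p)\phi_N(p;x)|0\rangle$, completing the proof.
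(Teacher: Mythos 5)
Your proof is correct, and its skeleton is the paper's: the same Wick contraction giving $\eta(p;z)\phi_N(p;x)|0\rangle=\prod_{j}\frac{\Theta_p(x_j/z)}{\Theta_p(tx_j/z)}\,(\eta(p;z))_-\phi_N(p;x)|0\rangle$, the same use of quasi-periodicity $\Theta_p(px)=-x^{-1}\Theta_p(x)$ to identify $t^{-N}(\eta(p;z))_-(\eta(p;p^{-1}z))_+\phi_N(p;x)|0\rangle$ with the oppositely expanded copy of that same theta quotient multiplying $(\eta(p;z))_-\phi_N(p;x)|0\rangle$, and the same localization via $(\eta(p;tx_i))_-\phi_N(p;x)|0\rangle=T_{q,x_i}\phi_N(p;x)|0\rangle$ followed by constant-term extraction. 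Where you genuinely depart from the paper is in how the central delta-function identity is established. The paper proves it purely formally: Lemma 4.1(1), the partial-fraction expansion of $\prod_i\Theta_p(t^{-1}x_iz)/\Theta_p(x_iz)$, whose proof runs through the Riemann relation for theta functions (Appendix B, induction on $N$), is combined with Lemma 3.5, the hyperfunction-style identity $\frac{1}{\Theta_p(x)}+\frac{x^{-1}}{\Theta_p(x^{-1})}=\frac{1}{(p;p)_\infty^3}\delta(x)$. You bypass both lemmas and argue by residues: the two terms are Laurent expansions of one meromorphic function in adjacent annuli, so their difference is the sum over the separating simple poles $z=tx_i$ of delta functions weighted by residues, computed from $\Theta_p(w)=(p;p)_\infty^3(1-w)+O((1-w)^2)$ and converted by the reflection $\Theta_p(x)=-x\Theta_p(x^{-1})$ into exactly the paper's coefficient $\frac{t^{-N+1}\Theta_p(t^{-1})}{(p;p)_\infty^3}\prod_{j\neq i}\frac{\Theta_p(tx_i/x_j)}{\Theta_p(x_i/x_j)}$. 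The trade-off: the paper's route is entirely algebraic (no convergence or pole bookkeeping, and the partial-fraction formula is reused elsewhere, e.g.\ in Remark 4.2), whereas yours is shorter and avoids the Appendix-B machinery, but it genuinely requires the analytic bookkeeping you flag — the common annuli must be non-empty and only the $k=0$ poles $z=tx_i$ (none of $z=tx_ip^{-k}$, $k\neq 0$) may lie between them, which holds when the $|x_j|$ are generic and within a factor $|p|^{-1}$ of one another, after which the coefficient identities extend by continuation. That step is left informal in your write-up but is routine to complete, so I regard the proposal as a valid alternative derivation of the same expansion.
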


\textbf{Organization of this paper.}

This paper is organized as follows. In section 2, we give a review of the trigonometric case. In section 3, we show how we can obtain the elliptic Ding-Iohara algebra. First, we define the elliptic kernel function introduced by Komori, Noumi and Shiraishi [18]. This is regarded as an important function to construct an elliptic analog of the Macdonald symmetric functions. Second, from the elliptic kernel function, we define elliptic currents denoted by $\eta(p;z),\, \xi(p;z),$ and $\varphi^{\pm}(p;z)$ which satisfy elliptic deformed relations of Ding-Iohara algebra's. Consequently, we can define the elliptic Ding-Iohara algebra $\mathcal{U}(q,t,p).$

In section 4, to clarify whether the elliptic Macdonald operator can be represented by $\eta(p;z)$, we study relations between the elliptic current $\eta(p;z)$ and the elliptic Macdonald operator. We derive the free field realization for the elliptic Macdonald operator in the form of the Theorem 1.3.

In section 5, some observations and remarks are given, and section 6 is appendix which contains proofs of Wick's theorem, Ramanujan's summation formula, the formula of the partial fraction expansion involving the theta functions.

\section{A review of the trigonometric case}
In this section, before considering the elliptic case we review some materials which construct backgrounds of this paper ; the Macdonald symmetric functions, the free field realization of the Macdonald operator, and the Ding-Iohara algebra.

\subsection{Macdonald symmetric functions}
First, we give some notations of symmetric polynomials and symmetric functions [2][3][14]. Let $q,t\in{\mathbf{C}}$ be parameters and assume $|q|<1$. We denote the $N$-th symmetric group by $\mathfrak{S}_{N}$ and set $\Lambda_{N}(q,t):=\mathbf{Q}(q,t)[x_{1},\cdots,x_{N}]^{\mathfrak{S}_{N}}$ as the space of $N$-variables symmetric polynomials over $\mathbf{Q}(q,t)$. If  a sequence $\lambda=(\lambda_{1}, \cdots, \lambda_{N})\in{(\mathbf{Z}_{\geq{0}})^{N}}$ satisfies the condition $\lambda_{i}\geq{\lambda_{i+1}} \,(1\leq{\forall i}\leq{N})$, $\lambda$ is called a partition. We denote the set of partitions by $\mathcal{P}$. For a partition $\lambda$, $\ell(\lambda):=\sharp\{i : \lambda_{i}\neq{0}\}$ denotes the length of $\lambda$, and $|\lambda|:=\sum_{i=1}^{\ell(\lambda)}\lambda_{i}$ denotes the size of $\lambda$.

For $\alpha=(\alpha_{1}, \cdots, \alpha_{N})\in{(\mathbf{Z}_{\geq{0}})^{N}}$, we set $x^{\alpha}:=x_{1}^{\alpha_{1}}\cdots x_{N}^{\alpha_{N}}$. For a partition $\lambda$, we define the monomial symmetric polynomial $m_{\lambda}(x)$ as follows :
\begin{align*}
m_{\lambda}(x):=\sum_{\text{$\alpha$ : $\alpha$ is a permutation of $\lambda$}}x^{\alpha}.
\end{align*}
As is well-known, $\{m_{\lambda}(x)\}_{\lambda\in{\mathcal{P}}}$ form a basis of $\Lambda_{N}(q,t)$. Let $p_{n}(x):=\sum_{i=1}^{N}x_{i}^{n} \,(n\in{\mathbf{Z}_{>0}})$ be the power sum and for a partition $\lambda$, we define $p_{\lambda}(x):=p_{\lambda_{1}}(x)\cdots p_{\lambda_{\ell(\lambda)}}(x)$.

Let $\rho^{N+1}_{N} : \Lambda_{N+1}(q,t) \to \Lambda_{N}(q,t)$ be the homomorphism defined by 
\begin{align*}
(\rho^{N+1}_{N}f)(x_{1},\cdots,x_{N}):=f(x_{1},\cdots,x_{N},0) \quad (f\in\Lambda_{N+1}(q,t)).
\end{align*}
Let us define the ring of symmetric functions $\Lambda(q,t)$ as the projective limit defined by $\{\rho^{N+1}_{N}\}_{N\geq{1}}$ :
\begin{align*}
\Lambda(q,t):=\lim_{\longleftarrow}\Lambda_{N}(q,t). 
\end{align*}
It is known that $\{p_{\lambda}(x)\}_{\lambda\in{\mathcal{P}}}$ form a basis of $\Lambda(q,t)$. We define $n_{\lambda}(a):=\sharp\{i : \lambda_{i}=a\}$ and $z_{\lambda},\,z_{\lambda}(q,t)$ as
\begin{align*}
z_{\lambda}:=\prod_{a\geq{1}}a^{n_{\lambda}(a)}n_{\lambda}(a)!, \quad z_{\lambda}(q,t):=z_{\lambda}\prod_{i=1}^{\ell(\lambda)}\frac{1-q^{\lambda_{i}}}{1-t^{\lambda_{i}}}. 
\end{align*}
Then we define an inner product $\langle \,, \rangle_{q,t}$ as follows :
\begin{align}
\langle p_{\lambda}(x), p_{\mu}(x) \rangle_{q,t}=\delta_{\lambda\mu}z_{\lambda}(q,t).
\end{align}
We define the kernel function $\Pi(q,t)(x,y)$ as follows :
\begin{align}
\Pi(q,t)(x,y):=\prod_{i,j}\frac{(tx_{i}y_{j};q)_{\infty}}{(x_{i}y_{j};q)_{\infty}}.
\end{align}
Then we have the following.
\begin{align*}
\sum_{\lambda\in{\mathcal{P}}}\frac{1}{z_{\lambda}(q,t)}p_{\lambda}(x)p_{\lambda}(y)=\Pi(q,t)(x,y).
\end{align*}

\begin{remark}
Assume that $u_{\lambda}(x),\,v_{\lambda}(x)$ $(\lambda\in\mathcal{P})$ are homogeneous symmetric functions whose degree are $|\lambda|$, and $\{u_{\lambda}(x)\}_{\lambda\in{\mathcal{P}}}$ and $\{v_{\lambda}(x)\}_{\lambda\in{\mathcal{P}}}$ form a basis of $\Lambda(q,t)$ respectively. Then the following holds :
\begin{align*}
&\hskip 1cm \text{$\{u_{\lambda}(x)\}_{\lambda\in{\mathcal{P}}}$ and $\{v_{\lambda}(x)\}_{\lambda\in{\mathcal{P}}}$ are dual basis under the inner product $\langle\, , \rangle_{q,t}$}. \\
&\Longleftrightarrow \text{$\{u_{\lambda}(x)\}_{\lambda\in{\mathcal{P}}}$ and $\{v_{\lambda}(x)\}_{\lambda\in{\mathcal{P}}}$ satisfy the relation} \, \sum_{\lambda\in{\mathcal{P}}}u_{\lambda}(x)v_{\lambda}(y)=\Pi(q,t)(x,y).
\end{align*}
Due to this fact, the form of the inner product $\langle\,,\rangle_{q,t}$ is determined by the form of the kernel function $\Pi(q,t)(x,y)$.
\end{remark}

The Macdonald symmetric functions are $q$-analog of the Schur symmetric functions and the Jack symmetric functions. The existence of the Macdonald symmetric functions due to Macdonald is stated as follows [2][3][14]. We define the order in $\mathcal{P}$ as follows :
\begin{align*}
\lambda\geq{\mu} \Longleftrightarrow \text{$|\lambda|=|\mu|$ and for all $i$, $\lambda_{1}+\cdots+\lambda_{i}\geq{\mu_{1}+\cdots+\mu_{i}}$}.
\end{align*}

\begin{theorem}[\textbf{Existence theorem of the Macdonald symmetric functions}] 
For each partition $\lambda$, a symmetric function $P_{\lambda}(x)\in{\Lambda(q,t)}$ satisfying the following conditions uniquely exists.
\begin{align}
&(1) \quad P_{\lambda}(x)=\sum_{\mu\leq{\lambda}}u_{\lambda\mu}m_{\mu}(x) \quad (u_{\lambda\mu}\in{\mathbf{Q}(q,t)}), \\
&(2) \quad \lambda \neq \mu \Longrightarrow \langle P_{\lambda}(x), P_{\mu}(x)\rangle_{q,t}=0.
\end{align}
\end{theorem}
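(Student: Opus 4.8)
The plan is to realize each $P_\lambda(x)$ as an eigenvector of the Macdonald operator $H_N(q,t)$ of (1.1) and to read conditions (1) and (2) off from the triangularity and the self-adjointness of that operator, respectively. I would work degree by degree: since $\langle\,,\rangle_{q,t}$ pairs $p_\lambda$ and $p_\mu$ only when $|\lambda|=|\mu|$, both conditions decouple across homogeneous components, so fix $n$ and restrict attention to the finite-dimensional space spanned by $\{m_\lambda:|\lambda|=n\}$, partially ordered by dominance. Taking $N\ge n$ variables, I build the $P_\lambda$ as eigenvectors of $H_N(q,t)$ and then check that they stabilize as $N\to\infty$ to well-defined elements of $\Lambda(q,t)$.

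First I would establish the triangularity of $H_N(q,t)$ in the monomial basis. Expanding the shift operators $T_{q,x_i}$ and extracting the leading behaviour of the coefficients $\prod_{j\ne i}(tx_i-x_j)/(x_i-x_j)$ gives
\[H_N(q,t)\,m_\lambda = d_\lambda\, m_\lambda + \sum_{\mu<\lambda} c_{\lambda\mu}\, m_\mu, \qquad d_\lambda=\sum_{i=1}^N t^{N-i}q^{\lambda_i},\]
so $H_N(q,t)$ preserves the degree-$n$ space and is upper-triangular for the dominance order, with diagonal entries $d_\lambda$. The next step is to check that these eigenvalues separate partitions: writing $d_\lambda$ as a polynomial in $t$ whose coefficient of $t^{N-i}$ is $q^{\lambda_i}$, one sees at once that $d_\lambda=d_\mu$ forces $\lambda=\mu$. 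Granting this, a triangular operator with pairwise distinct eigenvalues has, for each $\lambda$, a unique monic eigenvector $P_\lambda=m_\lambda+\sum_{\mu<\lambda}u_{\lambda\mu}m_\mu$; this is precisely condition (1), and it already pins $P_\lambda$ down, yielding the uniqueness half of the theorem.

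For condition (2) I would prove that $H_N(q,t)$ is self-adjoint for $\langle\,,\rangle_{q,t}$ and deduce orthogonality in the standard way: if $\lambda\ne\mu$ then $(d_\lambda-d_\mu)\langle P_\lambda,P_\mu\rangle_{q,t}=\langle H_N P_\lambda,P_\mu\rangle_{q,t}-\langle P_\lambda,H_N P_\mu\rangle_{q,t}=0$, and $d_\lambda\ne d_\mu$ forces $\langle P_\lambda,P_\mu\rangle_{q,t}=0$. The self-adjointness is where the kernel function enters. By the Remark above, the dual-basis property is equivalent to the reproducing identity $\sum_\lambda u_\lambda(x)v_\lambda(y)=\Pi(q,t)(x,y)$; a short computation with dual bases then shows that for any operator $A$ one has $A^{(x)}\Pi(q,t)(x,y)=A^{*(y)}\Pi(q,t)(x,y)$, so $A$ is self-adjoint exactly when it acts identically on the two sets of variables of the kernel. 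Thus it suffices to verify the kernel symmetry $H_N(q,t)^{(x)}\Pi(q,t)(x,y)=H_N(q,t)^{(y)}\Pi(q,t)(x,y)$, which follows from the explicit product form (2.2) of $\Pi(q,t)(x,y)$ together with a partial-fraction identity for the coefficients $\prod_{j\ne i}(tx_i-x_j)/(x_i-x_j)$.

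I expect the self-adjointness, equivalently the kernel symmetry, to be the main obstacle, since it is the one genuinely nontrivial input, whereas triangularity and eigenvalue separation are direct computations. A secondary point needing care is stabilization: the eigenvalues $d_\lambda$ depend on $N$, so I would check that the monic dominance-triangular eigenvectors $P_\lambda$ are themselves independent of $N$ for $N\ge\ell(\lambda)$ and compatible under the restriction maps $\rho^{N+1}_N$, so that they descend to genuine symmetric functions in $\Lambda(q,t)$ and conditions (1) and (2) hold in the projective limit.
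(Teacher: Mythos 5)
The paper itself contains no proof of this statement: Theorem 2.2 is quoted as known background from Macdonald [2][3][14], so there is no in-paper argument to compare against, and your proposal is in effect a reconstruction of Macdonald's classical proof. Its main steps are sound. The triangularity $H_{N}(q,t)m_{\lambda}=d_{\lambda}m_{\lambda}+\sum_{\mu<\lambda}c_{\lambda\mu}m_{\mu}$ with $d_{\lambda}=\sum_{i=1}^{N}q^{\lambda_{i}}t^{N-i}$ (the paper's $\varepsilon_{N}(\lambda)$ in (2.6)), the separation of eigenvalues over $\mathbf{Q}(q,t)$, the reduction of self-adjointness to the kernel symmetry $H_{N}(q,t)_{x}\Pi=H_{N}(q,t)_{y}\Pi$ of (2.7) via the reproducing property of $\Pi(q,t)(x,y)$, and the stabilization in $N$ are all correct and standard; for the last point the key fact making your plan work is the intertwining relation $\rho^{N+1}_{N}\circ H_{N+1}(q,t)=(tH_{N}(q,t)+1)\circ\rho^{N+1}_{N}$, which shows that the normalized operator $t^{-N}\bigl(H_{N}(q,t)-\sum_{i=1}^{N}t^{N-i}\bigr)$ has $N$-independent eigenvalues $\sum_{i}(q^{\lambda_{i}}-1)t^{-i}$ and that the monic triangular eigenvectors are compatible with restriction.

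The genuine gap is in the uniqueness step. You argue that a dominance-triangular operator with pairwise distinct diagonal entries has, for each $\lambda$, a unique monic triangular eigenvector, and that this ``already pins $P_{\lambda}$ down, yielding the uniqueness half of the theorem.'' That establishes uniqueness only within the class of eigenvector families; the theorem asserts uniqueness among all families satisfying (1) and (2), and a family satisfying (1) and (2) is not a priori a family of eigenvectors of $H_{N}(q,t)$. To close this you need the Gram--Schmidt induction on the dominance order: if $\{Q_{\lambda}\}$ also satisfies (1) and (2), then $Q_{\lambda}-P_{\lambda}=\sum_{\mu<\lambda}c_{\mu}P_{\mu}$ (both families are unitriangular over $\{m_{\mu}\}$, so they span the same flag), and pairing with $P_{\nu}$ for $\nu<\lambda$ gives $c_{\nu}\langle P_{\nu},P_{\nu}\rangle_{q,t}=0$ once the inductive hypothesis $Q_{\nu}=P_{\nu}$ is used. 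This forces $c_{\nu}=0$ only if $\langle P_{\nu},P_{\nu}\rangle_{q,t}\neq 0$, an anisotropy input your sketch never supplies: over an abstract field, orthogonality conditions alone do not determine the family when isotropic vectors are present. Here the input is available but must be stated: for generic real $0<q,t<1$ the inner product (2.1) is positive definite (all $z_{\lambda}(q,t)>0$), and the coefficients of $P_{\nu}$ are rational functions regular at generic such points, so $\langle P_{\nu},P_{\nu}\rangle_{q,t}$ takes positive values there and is therefore a nonzero element of $\mathbf{Q}(q,t)$. With this supplement (the same pairing argument then also shows that any family satisfying (1) and (2) automatically consists of $H_{N}$-eigenvectors), your proof is complete.
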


\begin{remark}
Set $\langle \lambda \rangle_{q,t}:=\langle P_{\lambda}(x),P_{\lambda}(x) \rangle_{q,t}$. Then the Macdonald symmetric functions satisfy the following relation.
\begin{align*}
\sum_{\lambda\in{\mathcal{P}}}\frac{1}{\langle \lambda \rangle_{q,t}}P_{\lambda}(x)P_{\lambda}(y)=\Pi(q,t)(x,y).
\end{align*}
This means that $\{P_{\lambda}(x)\}_{\lambda\in{\mathcal{P}}}$ form a basis of $\Lambda(q,t)$.
\end{remark}

For the Macdonald symmetric function $P_{\lambda}(x)$, we define the $N$-variable symmetric polynomial $P_{\lambda}(x_{1},\cdots,x_{N})$ as $P_{\lambda}(x_{1},\cdots,x_{N}):=P_{\lambda}(x_{1},\cdots,x_{N},0,0,\cdots)$. We call it the $N$-variables Macdonald polynomials. We set the $q$-shift operator by 
\begin{align*}
T_{q,x_{i}}f(x_{1},\cdots, x_{N}):=f(x_{1},\cdots,qx_{i},\cdots,x_{N})
\end{align*}
and define the Macdonald operator $H_{N}(q,t) : \Lambda_{N}(q,t) \to \Lambda_{N}(q,t)$ as follows :
\begin{align}
H_{N}(q,t):=\sum_{i=1}^{N}\prod_{j\neq{i}}\frac{tx_{i}-x_{j}}{x_{i}-x_{j}}T_{q,x_{i}}.
\end{align}

\begin{prop}
(1) For each partition $\lambda \, (\ell(\lambda)\leq{N})$, $P_{\lambda}(x_{1},\cdots,x_{N})$ is an eigen function of the Macdonald operator :
\begin{align}
H_{N}(q,t)P_{\lambda}(x_{1},\cdots,x_{N})=\varepsilon_{N}(\lambda)P_{\lambda}(x_{1},\cdots,x_{N}), \quad \varepsilon_{N}(\lambda):=\sum_{i=1}^{N}q^{\lambda_{i}}t^{N-i}.
\end{align}
(2) The kernel function $\Pi(q,t)(x,y)$ and the Macdonald operator $H_{N}(q,t)$ satisfy
\begin{align}
H_{N}(q,t)_{x}\Pi(q,t)(x_{1},\cdots,x_{N},y_{1},\cdots,y_{N})=H_{N}(q,t)_{y}\Pi(q,t)(x_{1},\cdots,x_{N},y_{1},\cdots,y_{N}).
\end{align}
Here we set $H_{N}(q,t)_{x},\,H_{N}(q,t)_{y}$ as
\begin{align*}
H_{N}(q,t)_{x}:=\sum_{i=1}^{N}\prod_{j\neq{i}}\frac{tx_{i}-x_{j}}{x_{i}-x_{j}}T_{q,x_{i}}, \quad H_{N}(q,t)_{y}:=\sum_{i=1}^{N}\prod_{j\neq{i}}\frac{ty_{i}-y_{j}}{y_{i}-y_{j}}T_{q,y_{i}}.
\end{align*}
\end{prop}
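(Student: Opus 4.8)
The plan is to treat the two parts together but to establish the kernel identity (2.11) first, since the eigenfunction property (2.8)--(2.9) will rest on it through the self-adjointness of $H_N(q,t)$. I would begin part (2) with the single $q$-shift. Using $T_{q,x_i}(x_iy_j;q)_{\infty}=(qx_iy_j;q)_{\infty}=(x_iy_j;q)_{\infty}/(1-x_iy_j)$ and the analogous relation for the $t$-factor, one reads off
\[
\frac{T_{q,x_i}\Pi(q,t)(x,y)}{\Pi(q,t)(x,y)}=\prod_{k=1}^{N}\frac{1-x_iy_k}{1-tx_iy_k},
\]
so that
\[
\frac{H_N(q,t)_x\Pi}{\Pi}=\sum_{i=1}^{N}\prod_{j\neq i}\frac{tx_i-x_j}{x_i-x_j}\prod_{k=1}^{N}\frac{1-x_iy_k}{1-tx_iy_k},
\]
with the mirror expression for the $y$-side. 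The identity (2.11) reduces to the purely rational statement that these two sums coincide. I would prove this by regarding each side as a rational function of a single variable, say $x_1$, with all other variables fixed, and matching its (simple) poles, the corresponding residues, and its value or growth as $x_1\to\infty$; the residue bookkeeping here is exactly the partial-fraction expansion that the paper defers to Section 6, which I would invoke to organize the cancellation. This is the main obstacle of the whole proposition: the sum is not manifestly symmetric between the $x$- and $y$-variables, and the real work is to verify that the apparent poles (at $x_i=x_j$ and at $tx_iy_k=1$) are spurious and that the residues on the two sides agree.

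Granting (2.11), I would deduce self-adjointness of $H_N(q,t)$ as follows. Since $\Pi(q,t)(x,y)=\sum_{\lambda}z_\lambda(q,t)^{-1}p_\lambda(x)p_\lambda(y)$ and $\langle p_\lambda,p_\mu\rangle_{q,t}=\delta_{\lambda\mu}z_\lambda(q,t)$, the kernel is reproducing: $\langle \Pi(q,t)(x,y),f(x)\rangle_{q,t,x}=f(y)$ for every symmetric function $f$. Pairing the identity $H_N(q,t)_x\Pi=H_N(q,t)_y\Pi$ with $p_\mu(x)$ in the $x$-variables and applying the reproducing property to each side yields $(H_N^{*}p_\mu)(y)=(H_N p_\mu)(y)$, where $H_N^{*}$ denotes the adjoint; as $\mu$ is arbitrary, $H_N(q,t)$ is self-adjoint with respect to $\langle\,,\,\rangle_{q,t}$. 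Some care is needed to pass between the $N$-variable kernel appearing in (2.11) and the inner product on the ring of symmetric functions, but the stability of $H_N$ under the restriction maps $\rho^{N+1}_N$ handles this passage.

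For part (1) two further ingredients remain. First, triangularity: $H_N(q,t)$ maps $\Lambda_N(q,t)$ into itself (the apparent poles at $x_i=x_j$ cancel because the numerator changes sign across that wall), and in the monomial basis it is triangular for the dominance order,
\[
H_N(q,t)\,m_\lambda=\varepsilon_N(\lambda)\,m_\lambda+\sum_{\mu<\lambda}c_{\lambda\mu}\,m_\mu .
\]
I would extract the diagonal coefficient by letting $x_1\gg x_2\gg\cdots\gg x_N$: in this regime $\prod_{j\neq i}(tx_i-x_j)/(x_i-x_j)\to t^{N-i}$ and $T_{q,x_i}x^\lambda=q^{\lambda_i}x^\lambda$, producing $\varepsilon_N(\lambda)=\sum_{i=1}^{N}q^{\lambda_i}t^{N-i}$ and showing every other monomial is strictly lower in dominance. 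Second, I combine triangularity with self-adjointness. Writing $H_N(q,t)P_\lambda=\sum_{\mu}b_{\lambda\mu}P_\mu$, the triangularity of $H_N$ together with that of $P_\lambda$ over the $m_\mu$ forces $b_{\lambda\mu}=0$ unless $\mu\leq\lambda$, while self-adjointness and orthogonality of the $P_\mu$ give
\[
b_{\lambda\mu}\langle P_\mu,P_\mu\rangle_{q,t}=\langle H_N P_\lambda,P_\mu\rangle_{q,t}=\langle P_\lambda,H_N P_\mu\rangle_{q,t}=b_{\mu\lambda}\langle P_\lambda,P_\lambda\rangle_{q,t},
\]
so $b_{\lambda\mu}=0$ also forces $\lambda\leq\mu$. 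Hence $b_{\lambda\mu}=0$ for $\lambda\neq\mu$, i.e.\ $H_N(q,t)P_\lambda=b_{\lambda\lambda}P_\lambda$, and comparing leading monomials identifies $b_{\lambda\lambda}=\varepsilon_N(\lambda)$, which is (2.8)--(2.9). It is worth noting that this last step needs no separate verification that the eigenvalues $\varepsilon_N(\lambda)$ are distinct: the self-adjointness does that work automatically (assuming the norms $\langle P_\lambda,P_\lambda\rangle_{q,t}$ are nonzero, which holds for generic $q,t$).
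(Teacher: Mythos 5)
The paper never proves this proposition: it appears in the review section as known material, with proofs delegated to Macdonald [2][3] and Shiraishi [14], so your argument can only be measured against the classical one. In outline it is exactly that classical argument — reduce part (2) to a rational-function identity, use the kernel identity to get self-adjointness of $H_{N}$, prove triangularity on the monomial basis with diagonal entries $\varepsilon_{N}(\lambda)$, and combine the two to force diagonality on the $P_{\lambda}$ — and the computation of $T_{q,x_{i}}\Pi/\Pi$, the extraction of $t^{N-i}$, and the final orthogonality argument (including the remark that distinctness of eigenvalues is not needed) are all correct. Two steps, however, would not go through as written.

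First, the rational identity behind part (2). Matching poles and residues in $x_{1}$ does work (the poles at $x_{1}=x_{j}$ cancel pairwise and the residues at $x_{1}=1/(ty_{k})$ of the two sides agree), but this only shows the difference is a polynomial in $x_{1}$ with a finite limit at infinity, hence a constant; pinning that constant to zero is not a matter of ``growth,'' because both sides tend to \emph{nonzero} limits as $x_{1}\to\infty$, and the equality of those limits is itself a nontrivial identity (now with $N-1$ variables $x$ against $N$ variables $y$), so this part of the plan is circular as stated. The efficient repair is the tool the paper itself uses in proving Proposition 2.6: apply the partial fraction expansion (6.20) with all $t_{i}=t$ \emph{inside each sum}. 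Writing $A_{i}(x)=\prod_{j\neq i}\frac{tx_{i}-x_{j}}{x_{i}-x_{j}}$ and $B_{k}(y)=\prod_{l\neq k}\frac{ty_{k}-y_{l}}{y_{k}-y_{l}}$, it gives
\[
\prod_{k=1}^{N}\frac{1-x_{i}y_{k}}{1-tx_{i}y_{k}}=\frac{1-t}{1-t^{N}}\sum_{k=1}^{N}\frac{1-t^{1-N}x_{i}y_{k}}{1-tx_{i}y_{k}}B_{k}(y),
\qquad
\prod_{i=1}^{N}\frac{1-x_{i}y_{k}}{1-tx_{i}y_{k}}=\frac{1-t}{1-t^{N}}\sum_{i=1}^{N}\frac{1-t^{1-N}x_{i}y_{k}}{1-tx_{i}y_{k}}A_{i}(x),
\]
so that $H_{N}(q,t)_{x}\Pi/\Pi$ and $H_{N}(q,t)_{y}\Pi/\Pi$ both equal the manifestly symmetric double sum $\frac{1-t}{1-t^{N}}\sum_{i,k}A_{i}(x)B_{k}(y)\frac{1-t^{1-N}x_{i}y_{k}}{1-tx_{i}y_{k}}$, with no residue bookkeeping at all. (Alternatively, finish your version by noting the difference is constant in every variable and letting $y\to 0$, using $\sum_{i}A_{i}(x)=(1-t^{N})/(1-t)$.)

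Second, the self-adjointness step. The inner product $\langle\,,\rangle_{q,t}$ lives on $\Lambda(q,t)$, while $H_{N}$ acts on $\Lambda_{N}(q,t)$, and ``pairing $\Pi$ with $p_{\mu}(x)$ in the $x$-variables'' is not well defined at the $N$-variable level: the restricted power sums $p_{\lambda}(x_{1},\cdots,x_{N})$ with $\ell(\lambda)>N$ are linearly dependent on the others, so the reproducing-kernel computation does not transport to $\Lambda_{N}$, and the stability relation $\rho^{N+1}_{N}H_{N+1}=(tH_{N}+1)\rho^{N+1}_{N}$ by itself does not repair this. The clean repair is to expand the kernel in the $P$-basis instead of the $p$-basis: by Remark 2.3, together with the vanishing $P_{\lambda}(x_{1},\cdots,x_{N})=0$ for $\ell(\lambda)>N$ (a consequence of (2.3), since $\mu\leq\lambda$ and $|\mu|=|\lambda|$ force $\ell(\mu)\geq\ell(\lambda)$), one has
\[
\Pi(q,t)(x,y)=\sum_{\ell(\lambda)\leq N}\frac{1}{\langle\lambda\rangle_{q,t}}P_{\lambda}(x_{1},\cdots,x_{N})P_{\lambda}(y_{1},\cdots,y_{N}),
\]
and the $P_{\lambda}(x_{1},\cdots,x_{N})$ with $\ell(\lambda)\leq N$ form a genuine basis of $\Lambda_{N}(q,t)$. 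Writing $H_{N}P_{\lambda}=\sum_{\mu}b_{\lambda\mu}P_{\mu}$ and comparing coefficients of $P_{\mu}(x)P_{\lambda}(y)$ on the two sides of the kernel identity yields $\langle\lambda\rangle_{q,t}^{-1}b_{\lambda\mu}=\langle\mu\rangle_{q,t}^{-1}b_{\mu\lambda}$, which is exactly the symmetry your final step consumes, without ever defining an adjoint on $\Lambda_{N}$. With these two repairs your proof closes and is the standard one.
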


\subsection{Free field realization of the Macdonald operator}
In this subsection, we show the free field realization of the Macdonald operator [14]. In the following, let $q,t\in{\mathbf{C}}$ be parameters and we assume $|q|<1$. First we define the algebra $\mathcal{B}$ of boson to be generated by $\{a_{n}\}_{n\in{\mathbf{Z}\setminus\{0\}}}$ and the relation :
\begin{align}
[a_{m},a_{n}]=m\frac{1-q^{|m|}}{1-t^{|m|}}\delta_{m+n,0}.
\end{align}
We set the normal ordering $\bm{:} \bullet \bm{:}$ as 
\begin{align*}
\bm{:}a_{m}a_{n}\bm{:}=
\begin{cases}
a_{m}a_{n} \quad (m<n), \\
a_{n}a_{m} \quad (m\geq{n}).
\end{cases}
\end{align*}
Let $|0 \rangle$ be the vacuum vector which satisfies $a_{n}|0 \rangle=0 \, (n>0)$. For a partition $\lambda$, we set $a_{-\lambda}:=a_{-\lambda_{1}}\cdots a_{-\lambda_{\ell(\lambda)}}$ and define the boson Fock space $\mathcal{F}$ as a left $\mathcal{B}$ module :
\begin{align*}
\mathcal{F}:=\text{span}\{a_{-\lambda}|0 \rangle : \lambda\in{\mathcal{P}}\}.
\end{align*}
We set the dual vacuum vector $\langle 0|$ which satisfies $\langle 0|a_{n}=0 \, (n<0)$. Similar to the definition of $\mathcal{F}$, we define the dual boson Fock space $\mathcal{F}^{\ast}$, as a right $\mathcal{B}$ module :
\begin{align*}
\mathcal{F}^{\ast}:=\text{span}\{\langle 0|a_{\lambda} : \lambda\in{\mathcal{P}}\} \quad (a_{\lambda}:=a_{\lambda_{1}}\cdots a_{\lambda_{\ell(\lambda)}}).
\end{align*}
Let us define a bilinear form $\langle \bullet|\bullet \rangle : \mathcal{F}^{\ast} \times \mathcal{F} \to \mathbf{C}$ by the following conditions.
\begin{align*}
(1) \, \langle 0|0 \rangle=1, \quad (2) \, \langle 0|a_{\lambda}a_{-\mu}|0 \rangle=\delta_{\lambda\mu}z_{\lambda}(q,t).
\end{align*}

\begin{remark}
It is clear that the bilinear form defined above corresponds to the inner product $\langle \,,\rangle_{q,t}$ in (2.1). Therefore the relation (2.8) is determined by the inner product $\langle\,,\rangle_{q,t}$, or equivalently, by the form of the kernel function $\Pi(q,t)(x,y)$.
\end{remark}

To reproduce the Macdonald operator from a boson operator, let us define an operator $\eta(z), \xi(z) : \mathcal{F} \to \mathcal{F}\otimes \mathbf{C}[[z,z^{-1}]]$ as follows $(\gamma:=(qt^{-1})^{-1/2})$.
\begin{align}
\eta(z):=\bm{:}\exp\bigg(-\sum_{n\neq{0}}(1-t^{n})a_{n}\frac{z^{-n}}{n}\bigg)\bm{:}, \quad
\xi(z):=\bm{:}\exp\bigg(\sum_{n\neq{0}}(1-t^{n})\gamma^{|n|}a_{n}\frac{z^{-n}}{n}\bigg)\bm{:}.
\end{align}
We can check that $\eta(z)$, $\xi(z)$ satisfy the relation :
\begin{align}
&\eta(z)\eta(w)=\frac{(1-w/z)(1-qt^{-1}w/z)}{(1-qw/z)(1-t^{-1}w/z)}\bm{:}\eta(z)\eta(w)\bm{:}, \\
&\xi(z)\xi(w)=\frac{(1-w/z)(1-q^{-1}tw/z)}{(1-q^{-1}w/z)(1-tw/z)}\bm{:}\xi(z)\xi(w)\bm{:}.
\end{align}
Define operators $\phi(z) : \mathcal{F} \to \mathcal{F}\otimes\mathbf{C}[[z,z^{-1}]]$, $\phi^{\ast}(z) : \mathcal{F}^{\ast} \to \mathcal{F}^{\ast}\otimes\mathbf{C}[[z,z^{-1}]]$ as follows :
\begin{align}
\phi(z):=\exp\bigg(\sum_{n>0}\frac{1-t^{n}}{1-q^{n}}a_{-n}\frac{z^{n}}{n}\bigg), \quad \phi^{\ast}(z):=\exp\bigg(\sum_{n>0}\frac{1-t^{n}}{1-q^{n}}a_{n}\frac{z^{n}}{n}\bigg).
\end{align}
Then we can check the relations.
\begin{align*}
& \eta(z)\phi(w)=\frac{1-w/z}{1-tw/z}\bm{:}\eta(z)\phi(w)\bm{:}, \quad \bm{:}\eta(tz)\phi(z)\bm{:}|0 \rangle=\phi(qz)|0 \rangle, \\
& \xi(z)\phi(w)=\frac{1-t\gamma w/z}{1-\gamma w/z}\bm{:}\xi(z)\phi(w)\bm{:}, \quad \bm{:}\xi(\gamma z)\phi(z)\bm{:}|0 \rangle=\phi(q^{-1}z)|0 \rangle.
\end{align*}
They are shown in the following way. By Wick's theorem we have 
\begin{align*}
\eta(z)\phi(w)
&=\exp\bigg(-\sum_{m>0}(1-t^{m})\frac{1-t^{m}}{1-q^{m}}\cdot m\frac{1-q^{m}}{1-t^{m}}\frac{(w/z)^{m}}{m\cdot m}\bigg)\bm{:}\eta(z)\phi(w)\bm{:} \\
&=\exp\bigg(-\sum_{m>0}(1-t^{m})\frac{(w/z)^{m}}{m}\bigg)\bm{:}\eta(z)\phi(w)\bm{:} \\
&=\frac{1-w/z}{1-tw/z}\bm{:}\eta(z)\phi(w)\bm{:}, \\
\xi(z)\phi(w)
&=\exp\bigg(\sum_{m>0}(1-t^{m})\gamma^{m}\frac{(w/z)^{m}}{m}\bigg)\bm{:}\xi(z)\phi(w)\bm{:} \\
&=\frac{1-t\gamma w/z}{1-\gamma w/z}\bm{:}\xi(z)\phi(w)\bm{:},
\end{align*}
where we use $\log(1-x)=-\displaystyle \sum_{n>0}\frac{x^{n}}{n} \, (|x|<1)$. The rest equations follow from simple calculations.

Set $\phi_{N}(x):=\prod_{j=1}^{N}\phi(x_{j})$ $(N\in\mathbf{Z}_{>0})$. Then we have the following.

\begin{prop}
Constant terms of $\eta(z)$, $\xi(z)$ act on $\phi_{N}(x)|0 \rangle$ as follows :
\begin{align}
& [\eta(z)]_{1}\phi_{N}(x)|0 \rangle=t^{-N}\{(t-1)H_{N}(q,t)+1\}\phi_{N}(x)|0 \rangle, \\
& [\xi(z)]_{1}\phi_{N}(x)|0 \rangle=t^{N}\{(t^{-1}-1)H_{N}(q^{-1},t^{-1})+1\}\phi_{N}(x)|0 \rangle.
\end{align}
\end{prop}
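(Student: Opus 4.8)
The plan is to compute $\eta(z)\phi_N(x)|0\rangle$ in closed form, read off the coefficient of $z^0$, and recognize the Macdonald operator. First I would factor $\eta(z)=\eta_{-}(z)\eta_{+}(z)$ into its creation part $\eta_{-}(z)$ (only nonnegative powers of $z$) and its annihilation part $\eta_{+}(z)$ (only nonpositive powers of $z$). Commuting $\eta_{+}(z)$ to the right through each $\phi(x_j)$ produces exactly the operator product factor recorded above, expanded in powers of $1/z$, and then $\eta_{+}(z)|0\rangle=|0\rangle$. Hence
\begin{align*}
\eta(z)\phi_N(x)|0\rangle=\prod_{j=1}^{N}\frac{z-x_j}{z-tx_j}\,\bm{:}\eta(z)\phi_N(x)\bm{:}|0\rangle,
\end{align*}
where $h(z):=\bm{:}\eta(z)\phi_N(x)\bm{:}|0\rangle=\eta_{-}(z)\phi_N(x)|0\rangle$ is a genuine power series in $z$ with $h(0)=\phi_N(x)|0\rangle$.

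Next I would extract the constant term in $z$. A naive reading gives an infinite sum, so instead I would split the rational prefactor by partial fractions,
\begin{align*}
\prod_{j=1}^{N}\frac{z-x_j}{z-tx_j}=1+\sum_{i=1}^{N}\frac{A_i}{z-tx_i},\qquad A_i=(t-1)x_i\,t^{-(N-1)}\prod_{j\neq i}\frac{tx_i-x_j}{x_i-x_j},
\end{align*}
in which the Macdonald weight $\prod_{j\neq i}\frac{tx_i-x_j}{x_i-x_j}$ already appears. Since the prefactor is expanded at $z=\infty$ (matching the way $\eta_{+}$ acts on the vacuum), the constant term of $\frac{h(z)}{z-tx_i}$ equals $\big(h(tx_i)-h(0)\big)/(tx_i)$, while the leading $1$ contributes $h(0)=\phi_N(x)|0\rangle$.

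The key input is then the evaluation $\bm{:}\eta(tx_i)\phi(x_i)\bm{:}|0\rangle=\phi(qx_i)|0\rangle$ established above: because the remaining $\phi(x_j)$ commute, it gives $h(tx_i)=\bm{:}\eta(tx_i)\phi_N(x)\bm{:}|0\rangle=T_{q,x_i}\phi_N(x)|0\rangle$. Substituting, the $A_i$-sum produces $t^{-N}(t-1)H_N(q,t)\phi_N(x)|0\rangle$ from the shift part and a scalar multiple of $\phi_N(x)|0\rangle$ from the $h(0)$ part; the scalar is the classical identity $\sum_{i=1}^{N}\prod_{j\neq i}\frac{tx_i-x_j}{x_i-x_j}=\frac{t^N-1}{t-1}$, i.e.\ the $\lambda=0$ eigenvalue $\varepsilon_N(0)$ from (2.7). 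Collecting all $\phi_N(x)|0\rangle$ terms leaves precisely $t^{-N}\{(t-1)H_N(q,t)+1\}\phi_N(x)|0\rangle$, which is (2.15).

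For $\xi(z)$ I would run the identical argument with the two relations $\xi(z)\phi(w)=\frac{1-t\gamma w/z}{1-\gamma w/z}\bm{:}\xi(z)\phi(w)\bm{:}$ and $\bm{:}\xi(\gamma x_i)\phi(x_i)\bm{:}|0\rangle=\phi(q^{-1}x_i)|0\rangle$; now the poles sit at $z=\gamma x_i$, the residue yields the weight $\prod_{j\neq i}\frac{t^{-1}x_i-x_j}{x_i-x_j}$ of $H_N(q^{-1},t^{-1})$ after extracting a factor $t^{N-1}$, and the scalar identity becomes $\sum_{i}\prod_{j\neq i}\frac{t^{-1}x_i-x_j}{x_i-x_j}=\frac{t^{-N}-1}{t^{-1}-1}$, giving (2.16). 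The only genuine obstacle is the bookkeeping around the constant term: one must consistently expand the prefactor at $z=\infty$ and observe that the would-be infinite sum collapses to the finite evaluation $h(tx_i)$ via the shift relation; everything after that reduces to the two symmetric-function identities and routine algebra.
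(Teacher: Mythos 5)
Your proof is correct --- I checked the residues $A_{i}$, the constant-term evaluation, and the final constants for both equations of the proposition --- but it routes the middle of the argument differently from the paper. Both proofs share the same skeleton: the OPE relation $\eta(z)\phi_{N}(x)=\prod_{i}\frac{1-x_{i}/z}{1-tx_{i}/z}\bm{:}\eta(z)\phi_{N}(x)\bm{:}$, a partial-fraction split of the prefactor, and the shift relation $(\eta(tx_{i}))_{-}\phi(x_{i})=\phi(qx_{i})$. The paper, however, uses the symmetric partial-fraction identity
\begin{align*}
\prod_{i=1}^{N}\frac{1-x_{i}/z}{1-tx_{i}/z}=\frac{1-t}{1-t^{N}}\sum_{i=1}^{N}\frac{1-t^{-N}tx_{i}/z}{1-tx_{i}/z}\prod_{j\neq{i}}\frac{tx_{i}-x_{j}}{x_{i}-x_{j}}
\end{align*}
(a specialization of (6.20) in Appendix B) together with the hyperfunction splitting $\frac{1}{1-x}=\delta(x)-\frac{x^{-1}}{1-x^{-1}}$, so the Macdonald weights arrive multiplied by $\delta(tx_{i}/z)$ --- the delta function performs the evaluation at $z=tx_{i}$ --- while the leftover piece $t^{-N}\prod_{i}\frac{1-z/x_{i}}{1-t^{-1}z/x_{i}}$, re-expanded at $z=0$, contributes exactly $t^{-N}$ to the constant term. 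You instead take the elementary residue decomposition $1+\sum_{i}A_{i}/(z-tx_{i})$, keep every expansion consistently at $z=\infty$, and perform the pole evaluation by hand via $[h(z)/(z-tx_{i})]_{1}=(h(tx_{i})-h(0))/(tx_{i})$; this is exactly what the paper's delta function implements, and it is equally legitimate at the paper's level of formality, since $h(tx_{i})=(\eta(tx_{i}))_{-}\phi_{N}(x)|0\rangle$ makes sense coefficientwise. The cost of your bookkeeping is the subtraction $-h(0)$ in each summand, which forces you to invoke the separate scalar identity $\sum_{i}\prod_{j\neq{i}}\frac{tx_{i}-x_{j}}{x_{i}-x_{j}}=\frac{t^{N}-1}{t-1}$ (your justification via $H_{N}(q,t)1=\varepsilon_{N}(0)$, i.e.\ Proposition 2.3 at $\lambda=0$, is valid; it is also the $z\to\infty$ limit of the paper's partial-fraction formula), whereas in the paper that identity is pre-packaged in the coefficient $\frac{1-t}{1-t^{N}}$ and the final form $t^{-N}\{(t-1)H_{N}(q,t)+1\}$ drops out term by term. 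What your route buys is a proof with no delta-function (Sato hyperfunction) calculus and a single Laurent expansion throughout. Your $\xi$ case is likewise correct: the poles sit at $z=\gamma x_{i}$, the rewriting $\prod_{j\neq{i}}\frac{x_{i}-tx_{j}}{x_{i}-x_{j}}=t^{N-1}\prod_{j\neq{i}}\frac{t^{-1}x_{i}-x_{j}}{x_{i}-x_{j}}$ supplies the weights of $H_{N}(q^{-1},t^{-1})$, and the inverted scalar identity collapses the remaining scalars to $t^{N}$, reproducing the second equation.
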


\begin{proof}[\textit{Proof}]
Here we show (2.13). From the relation of $\eta(z)$ and $\phi(z)$, we have
\begin{align*}
\eta(z)\phi_{N}(x)=\prod_{i=1}^{N}\frac{1-x_{i}/z}{1-tx_{i}/z}\bm{:}\eta(z)\phi_{N}(x)\bm{:}.
\end{align*}
By the formula of the partial fraction expansion, we have the following :
\begin{align*}
\prod_{i=1}^{N}\frac{1-x_{i}/z}{1-tx_{i}/z}=\frac{1-t}{1-t^{N}}\sum_{i=1}^{N}\frac{1-t^{-N}tx_{i}/z}{1-tx_{i}/z}\prod_{j\neq{i}}\frac{tx_{i}-x_{j}}{x_{i}-x_{j}}.
\end{align*}
Furthermore, we use the formal expression of the delta function $\delta(x)$ :
\begin{align*}
\delta(x)=\sum_{n\in{\mathbf{Z}}}x^{n}=\frac{1}{1-x}+\frac{x^{-1}}{1-x^{-1}}.
\end{align*}
This should be recognized as an expression of the Sato hyperfunction [5]. Then we have 
\begin{align*}
\prod_{i=1}^{N}\frac{1-x_{i}/z}{1-tx_{i}/z} 
&=\frac{1-t}{1-t^{N}}\sum_{i=1}^{N}(1-t^{-N}tx_{i}/z)\bigg\{\delta\Big(t\frac{x_{i}}{z}\Big)-\frac{t^{-1}x_{i}^{-1}z}{1-t^{-1}x_{i}^{-1}z}\bigg\}\prod_{j\neq{i}}\frac{tx_{i}-x_{j}}{x_{i}-x_{j}}\\
&=t^{-N}(t-1)\sum_{i=1}^{N}\prod_{j\neq{i}}\frac{tx_{i}-x_{j}}{x_{i}-x_{j}}\delta\Big(t\frac{x_{i}}{z}\Big)+t^{-N}\prod_{i=1}^{N}\frac{1-z/x_{i}}{1-t^{-1}z/x_{i}}.
\end{align*}
By this equation, the following holds :
\begin{align*}
&\quad [\eta(z)]_{1}\phi_{N}(x)|0 \rangle \\
&=\left\{t^{-N}(t-1)\sum_{i=1}^{N}\prod_{j\neq{i}}\frac{tx_{i}-x_{j}}{x_{i}-x_{j}}(\eta(tx_{i}))_{-}+t^{-N}\left[\prod_{i=1}^{N}\frac{1-z/x_{i}}{1-t^{-1}z/x_{i}}(\eta(z))_{-}\right]_{1}\right\}\phi_{N}(x)|0 \rangle \\
&=t^{-N}(t-1)\sum_{i=1}^{N}\prod_{j\neq{i}}\frac{tx_{i}-x_{j}}{x_{i}-x_{j}}T_{q,x_{i}}\phi_{N}(x)|0 \rangle +t^{-N}\phi_{N}(x)|0 \rangle \quad (\because \, (\eta(tz))_{-}\phi(z)=\phi(qz))\\
&=t^{-N}\{(t-1)H_{N}(q,t)+1\}\phi_{N}(x)|0 \rangle,
\end{align*}
where we use the equation 
\begin{align*}
\left[\prod_{i=1}^{N}\frac{1-z/x_{i}}{1-t^{-1}z/x_{i}}(\eta(z))_{-}\right]_{1}=1.
\end{align*} 
The proof of (2.14) is similar to the above, thus we omit it. \quad $\Box$
\end{proof}

\begin{remark}
(1) Set the operator $\phi^{\ast}_{N}(x):=\prod_{j=1}^{N}\phi^{\ast}(x_{j})$ $(N\in\mathbf{Z}_{>0})$. Then the kernel function $\Pi(q,t)(x,y)$ is reproduced by the operators $\phi_{N}^{\ast}(x)$, $\phi_{N}(y)$ as
\begin{align*}
\langle 0|\phi^{\ast}_{N}(x)\phi_{N}(y)|0 \rangle=\Pi(q,t)(x,y)=\prod_{1\leq{i,j}\leq{N}}\frac{(tx_{i}y_{j};q)_{\infty}}{(x_{i}y_{j};q)_{\infty}}.
\end{align*}

(2) Let us recall that the kernel function $\Pi(q,t)(x,y)$ determines the form of the relation (2.8). Therefore we can understand that the free field realization of the Macdonald operator is based on the form of the kernel function $\Pi(q,t)(x,y)$.
\end{remark}

\subsection{Ding-Iohara algebra $\mathcal{U}(q,t)$}
As is seen in the previous subsection, we can represent the Macdonald operator by using $\eta(z),\,\xi(z)$. By Wick's theorem, we can show the following.

\begin{prop}[\textbf{Relations of $\eta(z),\,\xi(z)$ and $\varphi^{\pm}(z)$}] 
Set $\gamma=(qt^{-1})^{-1/2}$ and let us define operators $\varphi^{\pm}(z) : \mathcal{F} \to \mathcal{F}\otimes\mathbf{C}[[z,z^{-1}]]$ as
\begin{align}
\varphi^{+}(z):=\bm{:}\eta(\gamma^{1/2}z)\xi(\gamma^{-1/2}z)\bm{:}, \quad \varphi^{-}(z):=\bm{:}\eta(\gamma^{-1/2}z)\xi(\gamma^{1/2}z)\bm{:}.
\end{align}
We set the structure function $g(x)$ as
\begin{align}
g(x):=\frac{(1-qx)(1-t^{-1}x)(1-q^{-1}tx)}{(1-q^{-1}x)(1-tx)(1-qt^{-1}x)}.
\end{align}
Then operators $\eta(z),\, \xi(z)$ and $\varphi^{\pm}(z)$ enjoy the following relations.
\begin{align}
&\hskip 1cm [\varphi^{\pm}(z), \varphi^{\pm}(w)]=0, \quad \varphi^{+}(z)\varphi^{-}(w)=\frac{g(\gamma z/w)}{g(\gamma^{-1}z/w)}\varphi^{-}(w)\varphi^{+}(z),\notag\\
&\varphi^{\pm}(z)\eta(w)=g\Big(\gamma^{\pm\frac{1}{2}}\frac{z}{w}\Big)\eta(w)\varphi^{\pm}(z), \quad \varphi^{\pm}(z)\xi(w)=g\Big(\gamma^{\mp\frac{1}{2}}\frac{z}{w}\Big)^{-1}\xi(w)\varphi^{\pm}(z),\notag\\
&\hskip 1cm \eta(z)\eta(w)=g\Big(\frac{z}{w}\Big)\eta(w)\eta(z),\quad \xi(z)\xi(w)=g\Big(\frac{z}{w}\Big)^{-1}\xi(w)\xi(z),\notag\\
&[\eta(z),\xi(w)]=\frac{(1-q)(1-t^{-1})}{1-qt^{-1}}\bigg\{\delta\Big(\gamma\frac{w}{z}\Big)\varphi^{+}(\gamma^{1/2}w)-\delta\Big(\gamma^{-1}\frac{w}{z}\Big)\varphi^{-}(\gamma^{-1/2}w)\bigg\}.
\end{align}
\end{prop}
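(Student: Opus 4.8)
The plan is to verify every relation in (2.19) by direct application of Wick's theorem, since each side is a product of vertex operators built from the single Heisenberg algebra $\mathcal{B}$. For two normal-ordered exponentials, the product equals $\exp(\text{contraction})$ times the normal-ordered product, where the contraction pairs the annihilation modes of the left operator with the creation modes of the right one via $[a_m,a_{-m}]=m\frac{1-q^m}{1-t^m}$. I would first record the elementary OPEs. The relations (2.10) and (2.11) already express $\eta(z)\eta(w)$ and $\xi(z)\xi(w)$ as explicit rational prefactors times their normal-ordered products; the one still to compute is the mixed contraction, which gives
\[
\eta(z)\xi(w)=A\Big(\tfrac{w}{z}\Big)\bm{:}\eta(z)\xi(w)\bm{:},\qquad A(u):=\frac{(1-q\gamma u)(1-t^{-1}\gamma u)}{(1-\gamma u)(1-\gamma^{-1}u)},
\]
after simplifying the denominator with $qt^{-1}=\gamma^{-2}$. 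The same computation with $z$ and $w$ interchanged yields $\xi(w)\eta(z)=A(z/w)\bm{:}\eta(z)\xi(w)\bm{:}$.

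For the exchange relations of the form $A(z)B(w)=g(\cdots)B(w)A(z)$, I would use that the normal-ordered product is symmetric, so the exchange factor is simply the ratio of the two scalar prefactors coming from the two orderings. For $\eta(z)\eta(w)$ this ratio is $f(w/z)/f(z/w)$ with $f$ the prefactor of (2.10); writing each binomial $1-cu$ as $-c(1-c^{-1}u)$ and cancelling the common factor $qt^{-1}$ shows this equals $g(z/w)$, and the analogous manipulation (equivalently, the substitution $q\to q^{-1},t\to t^{-1}$, under which $f\mapsto$ the prefactor of (2.11), $\gamma\mapsto\gamma^{-1}$, and $g\mapsto g^{-1}$) gives the $\xi\xi$ factor $g(z/w)^{-1}$. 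The relations involving $\varphi^{\pm}$ are then obtained by expanding $\varphi^{\pm}(z)=\bm{:}\eta(\gamma^{\pm1/2}z)\xi(\gamma^{\mp1/2}z)\bm{:}$ through their defining normal-ordered products and multiplying the already-computed elementary factors; the commutativity $[\varphi^{\pm},\varphi^{\pm}]=0$ and the $\varphi^{+}\varphi^{-}$ braiding follow because the product of all the pairwise contraction factors is symmetric (respectively has the stated ratio) under $z\leftrightarrow w$.

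The main obstacle is the commutator $[\eta(z),\xi(w)]$, where no simple ratio appears; instead one must subtract the two prefactors $A(w/z)$ and $A(z/w)$, each expanded as a power series in its own small variable. The crucial observation is that $A$ is invariant under $u\mapsto u^{-1}$: multiplying numerator and denominator of $A(u^{-1})$ by $u^{2}$ and using $\gamma^{2}=q^{-1}t$ together with $qt^{-1}\gamma^{2}=1$ gives $A(u^{-1})=A(u)$ as rational functions. Hence the commutator coefficient is the difference of the expansions of one and the same rational function about $u=0$ and $u=\infty$. By the partial-fraction decomposition $A(u)=1+\frac{R_1}{1-\gamma u}+\frac{R_2}{1-\gamma^{-1}u}$ and the identity $\delta(x)=\frac{1}{1-x}+\frac{x^{-1}}{1-x^{-1}}$ (as used in the proof of Proposition 2.7), this difference collapses to $R_1\,\delta(\gamma w/z)+R_2\,\delta(\gamma^{-1}w/z)$, where the residues compute to $R_1=\frac{(1-q)(1-t^{-1})}{1-qt^{-1}}$ and $R_2=-R_1$.

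Finally, since $\delta(\gamma w/z)$ forces $z=\gamma w$, I would apply the localization $\delta(\gamma w/z)\,\bm{:}\eta(z)\xi(w)\bm{:}=\delta(\gamma w/z)\,\bm{:}\eta(\gamma w)\xi(w)\bm{:}=\delta(\gamma w/z)\,\varphi^{+}(\gamma^{1/2}w)$, and likewise $\delta(\gamma^{-1}w/z)\,\bm{:}\eta(z)\xi(w)\bm{:}=\delta(\gamma^{-1}w/z)\,\varphi^{-}(\gamma^{-1/2}w)$, recognizing the coincidence limits as $\varphi^{\pm}$ directly from their definitions. Together with $R_2=-R_1$ this lands exactly on the right-hand side of the last line of (2.19), completing the proof.
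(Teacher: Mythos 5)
Most of your proposal is correct and is in substance the paper's own argument: the paper omits a direct proof of Proposition 2.8 (Remark 2.9) and instead proves its elliptic analogue in Section 3 by exactly the ingredients you use, namely Wick's theorem for the OPE prefactors, the formal identity $\delta(x)=\frac{1}{1-x}+\frac{x^{-1}}{1-x^{-1}}$ applied to the mixed prefactor, and the localization $\delta(\gamma w/z)\,\bm{:}\eta(z)\xi(w)\bm{:}=\delta(\gamma w/z)\,\varphi^{+}(\gamma^{1/2}w)$. Your mixed OPE factor $A(u)$, the inversion symmetry $A(u^{-1})=A(u)$, the residues $R_{1}=-R_{2}=\frac{(1-q)(1-t^{-1})}{1-qt^{-1}}$, and the partial-fraction route to the commutator (which is precisely the $p\to 0$ shadow of the paper's Lemma 3.5 manipulation in the proof of Proposition 3.7) all check out.

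There is, however, one genuine flaw: your justification of $[\varphi^{\pm}(z),\varphi^{\pm}(w)]=0$, namely that ``the product of all the pairwise contraction factors is symmetric under $z\leftrightarrow w$'', is not a valid inference, and your own computation of $[\eta(z),\xi(w)]$ is the counterexample. There the prefactor $A$ is also inversion-symmetric, so $A(w/z)=A(z/w)$ as rational functions, yet the commutator is nonzero, precisely because the two orderings expand this single rational function in different annuli and the difference of the two expansions is a sum of delta functions supported at its poles. Symmetry of the prefactor therefore proves nothing by itself; what actually saves the $\varphi^{\pm}$ case is that the product of the four elementary factors is pole-free, indeed identically $1$. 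Writing $f$ and $h$ for the prefactors in (2.10) and (2.11), one checks, using $\gamma^{2}=q^{-1}t$, that $A(\gamma^{-1}u)=1/f(u)$ and $A(\gamma u)=1/h(u)$, so for $\varphi^{+}(z)\varphi^{+}(w)$ the total factor $f(u)\,h(u)\,A(\gamma u)\,A(\gamma^{-1}u)$ with $u=w/z$ equals $1$; hence both orderings equal the same normal-ordered product and commute. Equivalently, and this is how the paper disposes of it in the proof of Theorem 3.8, the creation parts inside $\bm{:}\eta(\gamma^{1/2}z)\xi(\gamma^{-1/2}z)\bm{:}$ cancel exactly, so $\varphi^{+}$ consists solely of annihilation modes (and $\varphi^{-}$ solely of creation modes), making the commutativity manifest. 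With this step repaired, your proof is complete and parallels the paper's.
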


\begin{remark}
(1) In section 3, we will prove an elliptic version of the proposition 2.8. Therefore we omit the proof of the proposition 2.8.

(2) As $[\varphi^{\pm}(z)]_{1}=1$, this leads that $[[\eta(z)]_{1},[\xi(w)]_{1}]=0$. The equation corresponds to the commutativity of the Macdonald operators : $[H_{N}(q,t),H_{N}(q^{-1},t^{-1})]=0$.
\end{remark}

It is important that these relations (2.17) are similar to the relations of the Drinfeld realization of $U_{q}(\widehat{sl_{2}})$ [6][7]. By this fact, we can understand (2.17) as a kind of quantum group structure. By this way, we can define the Ding-Iohara algebra $\mathcal{U}(q,t)$ as follows [17].

\begin{definition}[\textbf{Ding-Iohara algebra $\mathcal{U}(q,t)$}]
Let $g(x)$ be the same function defined by (2.16). Let $\gamma$ be the central, invertible element and set currents 
$x^{\pm}(z):=\sum_{n\in{\mathbf{Z}}}x^{\pm}_{n}z^{-n}$, $\psi^{\pm}(z):=\sum_{n\in\mathbf{Z}}\psi^{\pm}_{n}z^{-n}$ satisfying the relations :
\begin{align}
&\hskip 1.5cm [\psi^{\pm}(z), \psi^{\pm}(w)]=0, \quad \psi^{+}(z)\psi^{-}(w)=\frac{g(\gamma z/w)}{g(\gamma^{-1}z/w)}\psi^{-}(w)\psi^{+}(z), \notag\\
&\psi^{\pm}(z)x^{+}(w)=g\left(\gamma^{\pm\frac{1}{2}}\frac{z}{w}\right)x^{+}(w)\psi^{\pm}(z), \quad \psi^{\pm}(z)x^{-}(w)=g\left(\gamma^{\mp\frac{1}{2}}\frac{z}{w}\right)^{-1}x^{-}(w)\psi^{\pm}(z), \notag\\
&\hskip 4cm x^{\pm}(z)x^{\pm}(w)=g\left(\frac{z}{w}\right)^{\pm 1}x^{\pm}(w)x^{\pm}(z), \notag\\
&[x^{+}(z),x^{-}(w)]=\frac{(1-q)(1-t^{-1})}{1-qt^{-1}}\bigg\{\delta\Big(\gamma\frac{w}{z}\Big)\psi^{+}\big(\gamma^{1/2}w\big)-\delta\Big(\gamma^{-1}\frac{w}{z}\Big)\psi^{-}\big(\gamma^{-1/2}w\big)\bigg\}.
\end{align}
Then we define the Ding-Iohara algebra $\mathcal{U}(q,t)$ to be the associative $\mathbf{C}$-algebra generated by $\{x^{\pm}_{n}\}_{n\in{\mathbf{Z}}},\,\{\psi^{\pm}_{n}\}_{n\in{\mathbf{Z}}}$ and $\gamma$ with the above relations.
\end{definition}

Due to the proposition 2.8, the map 
\begin{align*}
\gamma \mapsto (qt^{-1})^{-1/2}, \quad x^{+}(z) \mapsto \eta(z), \quad x^{-}(z) \mapsto \xi(z), \quad \psi^{\pm}(z) \mapsto \varphi^{\pm}(z)
\end{align*}
gives a representation of the Ding-Iohara algebra (the free field realization).

\begin{remark}
It is known that $\mathcal{U}(q,t)$ has the coproduct $\Delta : \mathcal{U}(q,t) \to \mathcal{U}(q,t)\otimes \mathcal{U}(q,t)$ defined as follows [17] :
\begin{align}
&\Delta(\gamma^{\pm 1})=\gamma^{\pm 1}\otimes\gamma^{\pm 1}, \quad \Delta(\psi^{\pm}(z))=\psi^{\pm}(\gamma^{\pm1/2}_{(2)}z)\otimes\psi^{\pm}(\gamma^{\mp1/2}_{(1)}z), \notag\\
&\Delta(x^{+}(z))=x^{+}(z)\otimes 1+\psi^{-}(\gamma^{1/2}_{(1)}z)\otimes x^{+}(\gamma_{(1)}z), \notag\\
&\Delta(x^{-}(z))=x^{-}(\gamma_{(2)}z)\otimes \psi^{+}(\gamma^{1/2}_{(2)}z)+1\otimes x^{-}(z).
\end{align}
Here we define $\gamma_{(1)}:=\gamma\otimes 1,\, \gamma_{(2)}:=1\otimes\gamma$.
\end{remark}

\section{Elliptic Ding-Iohara algebra}
In this section, we are going to show that : 1) From the elliptic kernel function we can construct elliptic currents, 2) From relations among the elliptic currents satisfy, an elliptic analog of the Ding-Iohara algebra arises. 

In the following, we use parameters $q,t,p\in{\mathbf{C}}$ which satisfy $|q|<1,\,|p|<1$.

\subsection{Kernel function introduced by Komori, Noumi and Shiraishi}
First fix a positive integer $N\in\mathbf{Z}_{>0}$. The elliptic kernel function introduced by Komori, Noumi and Shiraishi [18] is defined as 
\begin{align}
\Pi(q,t,p)(x,y):=\prod_{1\leq{i,j}\leq{N}}\frac{\Gamma_{q,p}(x_{i}y_{j})}{\Gamma_{q,p}(tx_{i}y_{j})}.
\end{align}
As $\Gamma_{q,p}(x) \xrightarrow[p \to 0]{} (x;q)_{\infty}^{-1}$, the elliptic kernel function degenerates to $\Pi(q,t)(x,y)$ in the limit $p \to 0$ :
\begin{align*}
\Pi(q,t,p)(x,y) \xrightarrow[p \to 0]{} \Pi(q,t)(x,y)=\prod_{1\leq{i,j}\leq{N}}\frac{(tx_{i}y_{j};q)_{\infty}}{(x_{i}y_{j};q)_{\infty}}.
\end{align*}

\begin{remark}
In the paper [18], it is shown that the elliptic kernel function $\Pi(q,t,p)(x,y)$ and the elliptic Macdonald operator $H_{N}(q,t,p)$ in (1.3) satisfy the following relation :
\begin{align*}
H_{N}(q,t,p)_{x}\Pi(q,t,p)(x_{1},\cdots,x_{N},y_{1},\cdots,y_{N})=H_{N}(q,t,p)_{y}\Pi(q,t,p)(x_{1},\cdots,x_{N},y_{1},\cdots,y_{N}).
\end{align*}
\end{remark}

We can check the expression of $\Gamma_{q,p}(x)$ :
\begin{align*}
\Gamma_{q,p}(x)=\exp\bigg(-\sum_{n>0}\frac{(qp)^{n}}{(1-q^{n})(1-p^{n})}\frac{x^{-n}}{n}\bigg)\exp\bigg(\sum_{n>0}\frac{1}{(1-q^{n})(1-p^{n})}\frac{x^{n}}{n}\bigg).
\end{align*}
Then we can rewrite $\Pi(q,t,p)(x,y)$ by using power sums as
\begin{align}
&\Pi(q,t,p)(x,y)=\exp\bigg(\sum_{n>0}\frac{(1-t^{n})(qt^{-1}p)^{n}}{(1-q^{n})(1-p^{n})}\frac{p_{n}(\overline{x})p_{n}(\overline{y})}{n}\bigg) \notag\\
&\hskip 5cm \exp\bigg(\sum_{n>0}\frac{1-t^{n}}{(1-q^{n})(1-p^{n})}\frac{p_{n}(x)p_{n}(y)}{n}\bigg).
\end{align}
Here $p_{n}(\overline{x}):=\sum_{i=1}^{N}x_{i}^{-n} \, (n\in{\mathbf{Z}_{>0}})$ denotes the negative power sum, and for a partition $\lambda$, set $p_{\lambda}(\overline{x}):=p_{\lambda_{1}}(\overline{x})\cdots p_{\lambda_{\ell(\lambda)}}(\overline{x})$. We also define 
\begin{align}
z_{\lambda}(q,t,p):=z_{\lambda}\prod_{i=1}^{\ell(\lambda)}(1-p^{\lambda_{i}})\frac{1-q^{\lambda_{i}}}{1-t^{\lambda_{i}}}, \quad \overline{z}_{\lambda}(q,t,p):=z_{\lambda}\prod_{i=1}^{\ell(\lambda)}\frac{1-p^{\lambda_{i}}}{(qt^{-1}p)^{\lambda_{i}}}\frac{1-q^{\lambda_{i}}}{1-t^{\lambda_{i}}}.
\end{align}
Then we can expand $\Pi(q,t,p)(x,y)$ as the following form :
\begin{align}
\Pi(q,t,p)(x,y)=\sum_{\lambda\in{\mathcal{P}}}\frac{1}{\overline{z}_{\lambda}(q,t,p)}p_{\lambda}(\overline{x})p_{\lambda}(\overline{y})\sum_{\mu\in{\mathcal{P}}}\frac{1}{z_{\mu}(q,t,p)}p_{\mu}(x)p_{\mu}(y).
\end{align}

\subsection{Operator $\phi(p;z)$ and elliptic currents $\eta(p;z),\,\xi(p;z)$ and $\varphi^{\pm}(p;z)$}
Here in this subsection we are going to define the elliptic currents and study their properties. Keeping the expression of $\Pi(q,t,p)(x,y)$ as (3.4) in mind, we introduce  an algebra $\mathcal{B}_{a,b}$ of bosons generated by $\{a_{n}\}_{n\in{\mathbf{Z}\setminus\{0\}}},\, \{b_{n}\}_{n\in{\mathbf{Z}\setminus\{0\}}}$ and the following relations :
\begin{align}
&[a_{m},a_{n}]=m(1-p^{|m|})\frac{1-q^{|m|}}{1-t^{|m|}}\delta_{m+n,0}, \quad [b_{m},b_{n}]=m\frac{1-p^{|m|}}{(qt^{-1}p)^{|m|}}\frac{1-q^{|m|}}{1-t^{|m|}}\delta_{m+n,0}, \notag\\
&[a_{m},b_{n}]=0.
\end{align}
As in the trigonometric case, let $|0 \rangle$ be the vacuum vector which satisfies the conditions $a_{n}|0 \rangle=b_{n}|0 \rangle=0 \, (n>0)$ and set the boson Fock space $\mathcal{F}$ as a left $\mathcal{B}_{a,b}$ module :
\begin{align}
\mathcal{F}:=\text{span}\{a_{-\lambda}b_{-\mu}|0 \rangle : \lambda, \mu\in{\mathcal{P}}\}.
\end{align}
The dual vacuum vector $\langle 0|$ is defined by the conditions $\langle 0|a_{n}=\langle 0|b_{n}=0 \, (n<0)$ and we set the dual boson Fock space $\mathcal{F}^{\ast}$ as a right $\mathcal{B}_{a,b}$ module :
\begin{align}
\mathcal{F}^{\ast}:=\text{span}\{\langle 0|a_{\lambda}b_{\mu} : \lambda, \mu\in{\mathcal{P}}\}.
\end{align}
We define a bilinear form $\langle \bullet|\bullet \rangle : \mathcal{F}^{\ast} \times \mathcal{F} \to \mathbf{C}$ by the following conditions.
\begin{align*}
(1) \, \langle 0|0 \rangle=1, \quad (2) \, \langle 0|a_{\lambda_{1}}b_{\lambda_{2}}a_{-\mu_{1}}b_{-\mu_{2}}|0 \rangle=\delta_{\lambda_{1}\mu_{1}}\delta_{\lambda_{2}\mu_{2}}z_{\lambda_{1}}(q,t,p)\overline{z}_{\lambda_{2}}(q,t,p).
\end{align*}
We also define the normal ordering $\bm{:} \bullet \bm{:}$ as usual :
\begin{align*}
\bm{:}a_{m}a_{n}\bm{:}=
\begin{cases}
a_{m}a_{n} \quad (m<n), \\
a_{n}a_{m} \quad (m\geq{n}),
\end{cases}
\bm{:}b_{m}b_{n}\bm{:}=
\begin{cases}
b_{m}b_{n} \quad (m<n), \\
b_{n}b_{m} \quad (m\geq{n}).
\end{cases}
\end{align*}

\begin{remark}
The above defined algebra of bosons leads to consider the space of symmetric functions $\Lambda_{N}(q,t,p):=\mathbf{C}[[x_{i},x_{i}^{-1} : 1\leq{i}\leq{N}]]^{\mathfrak{S}_{N}}$.
But it is not clear whether an elliptic analog of the Macdonald symmetric functions live in $\Lambda_{N}(q,t,p)$.
\end{remark}

Define operators $\phi(p;z) : \mathcal{F} \to \mathcal{F}\otimes \mathbf{C}[[z,z^{-1}]]$ and $\phi^{\ast}(p;z) : \mathcal{F}^{\ast} \to \mathcal{F}^{\ast}\otimes\mathbf{C}[[z,z^{-1}]]$ as follows :
\begin{align}
&\phi(p;z):=\exp\bigg(\sum_{n>0}\frac{(1-t^{n})(qt^{-1}p)^{n}}{(1-q^{n})(1-p^{n})}b_{-n}\frac{z^{-n}}{n}\bigg)\exp\bigg(\sum_{n>0}\frac{1-t^{n}}{(1-q^{n})(1-p^{n})}a_{-n}\frac{z^{n}}{n}\bigg), \\
&\phi^{\ast}(p;z):=\exp\bigg(\sum_{n>0}\frac{(1-t^{n})(qt^{-1}p)^{n}}{(1-q^{n})(1-p^{n})}b_{n}\frac{z^{-n}}{n}\bigg)\exp\bigg(\sum_{n>0}\frac{1-t^{n}}{(1-q^{n})(1-p^{n})}a_{n}\frac{z^{n}}{n}\bigg).
\end{align}
Set $\phi_{N}(p;x):=\prod_{j=1}^{N}\phi(p;x_{j})$, $\phi^{\ast}_{N}(p;x):=\prod_{j=1}^{N}\phi^{\ast}(p;x_{j})$ $(N\in\mathbf{Z}_{>0})$, then we have 
\begin{align}
\langle 0|\phi^{\ast}_{N}(p;x)\phi_{N}(p;y)|0 \rangle=\Pi(q,t,p)(x_{1},\cdots,x_{N},y_{1},\cdots,y_{N}).
\end{align}
We can check this. First we have
\begin{align*}
&\quad \phi^{\ast}(p;z)\phi(p;w) \\
&=\exp\bigg(\sum_{m>0}\frac{(1-t^{m})(qt^{-1}p)^{m}}{(1-q^{m})(1-p^{m})}\frac{(1-t^{m})(qt^{-1}p)^{m}}{(1-q^{m})(1-p^{m})}\cdot m\frac{1-p^{m}}{(qt^{-1}p)^{m}}\frac{1-q^{m}}{1-t^{m}}\frac{(zw)^{-m}}{m}\bigg) \\
&\times \exp\bigg(\sum_{m>0}\frac{(1-t^{m})}{(1-q^{m})(1-p^{m})}\frac{(1-t^{m})}{(1-q^{m})(1-p^{m})}\cdot m(1-p^{m})\frac{1-q^{m}}{1-t^{m}}\frac{(zw)^{m}}{m}\bigg) \\
&\hskip 11cm \times \bm{:}\phi^{\ast}(p;z)\phi(p;w)\bm{:} \\
&=\exp\bigg(\sum_{m>0}\frac{(1-t^{m})(qt^{-1}p)^{m}}{(1-q^{m})(1-p^{m})}\frac{(zw)^{-m}}{m}\bigg)\exp\bigg(\sum_{m>0}\frac{(1-t^{m})}{(1-q^{m})(1-p^{m})}\frac{(zw)^{m}}{m}\bigg) \\
&\hskip 11cm \times \bm{:}\phi^{\ast}(p;z)\phi(p;w)\bm{:}.
\end{align*}
By this equation and the expression of the kernel function (3.2), we have (3.10). 

Next let us construct an operator $\eta(p;z) : \mathcal{F} \to \mathcal{F}\otimes \mathbf{C}[[z,z^{-1}]]$ which satisfies the conditions :
\begin{align}
1) \, \bm{:}\eta(p;tz)\phi(p;z)\bm{:}|0 \rangle=\phi(p;qz)|0 \rangle, \quad 2) \, \langle 0|\bm{:}\phi^{\ast}(p;z)\eta(p;z^{-1})\bm{:}=\langle 0|\phi^{\ast}(p;qz).
\end{align}
These conditions are satisfied by the following operator, which we would like to call the elliptic current.

\begin{prop}[\textbf{Elliptic current $\eta(p;z)$}] 
Let $\eta(p;z) : \mathcal{F} \to \mathcal{F}\otimes \mathbf{C}[[z,z^{-1}]]$ be an operator defined as follows :
\begin{align}
\eta(p;z):=\bm{:}\exp\bigg(-\sum_{n\neq{0}}\frac{1-t^{-n}}{1-p^{|n|}}p^{|n|}b_{n}\frac{z^{n}}{n}\bigg)\exp\bigg(-\sum_{n\neq{0}}\frac{1-t^{n}}{1-p^{|n|}}a_{n}\frac{z^{-n}}{n}\bigg)\bm{:}.
\end{align}
Then we have
\begin{align}
&\text{i) \, \textit{$\eta(p;z)$ satisfies the conditions 1) and 2) in (3.11).}} \notag\\
&\text{ii)} \quad \eta(p;z)\eta(p;w)=\frac{\Theta_{p}(w/z)\Theta_{p}(qt^{-1}w/z)}{\Theta_{p}(qw/z)\Theta_{p}(t^{-1}w/z)}\bm{:}\eta(p;z)\eta(p;w)\bm{:} \quad 
\left(\begin{cases}
\smallskip |p|<|qw/z|<1, \\ 
\smallskip |p|<|t^{-1}w/z|<1
\end{cases}\right).
\end{align}
\end{prop}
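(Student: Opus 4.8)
The plan is to treat the two claims separately. Part i) is a statement about states in the Fock space, so I would prove it by exploiting that $\phi(p;z)$ is built purely from creation operators $a_{-n},b_{-n}$ ($n>0$), whereas part ii) is an operator product expansion, which I would establish by Wick's theorem (proved in section 6), reducing the normal ordering to a scalar contraction factor. For condition 1) of part i), the normal ordering $\bm{:}\eta(p;tz)\phi(p;z)\bm{:}$ places all annihilation operators ($a_n,b_n$ with $n>0$) to the right, and since $\phi(p;z)$ contributes none of these, the annihilation exponential acts as the identity on $|0\rangle$. Hence $\bm{:}\eta(p;tz)\phi(p;z)\bm{:}|0\rangle$ is obtained by summing the creation parts of $\eta(p;tz)$ and $\phi(p;z)$ in a single exponential applied to $|0\rangle$. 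I would then collect, mode by mode, the coefficient of each $a_{-k}$ and $b_{-k}$; a short computation using $\frac{1}{1-q^k}-1=\frac{q^k}{1-q^k}$ shows that the $a_{-k}$-coefficient becomes $\frac{(1-t^k)q^k}{(1-q^k)(1-p^k)}$ and the $b_{-k}$-coefficient becomes $\frac{(1-t^k)(pt^{-1})^k}{(1-q^k)(1-p^k)}$, which are exactly the coefficients of $\phi(p;qz)$. Condition 2) is the dual statement and follows from the same bookkeeping applied to $\langle 0|$, where now $\phi^{\ast}$ contributes only annihilation operators.

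For part ii) I would write $\eta(p;z)=\bm{:}e^{B(z)}e^{A(z)}\bm{:}$ with $A(z)$ the $a$-part and $B(z)$ the $b$-part, and apply Wick's theorem to obtain $\eta(p;z)\eta(p;w)=\exp\big([A_+(z),A_-(w)]+[B_+(z),B_-(w)]\big)\bm{:}\eta(p;z)\eta(p;w)\bm{:}$, where the subscript $+$ denotes the annihilation ($n>0$) part and $-$ the creation ($n<0$) part; the $a$- and $b$-sectors decouple because $[a_m,b_n]=0$. Using $[a_m,a_{-m}]=m(1-p^m)\frac{1-q^m}{1-t^m}$, the $a$-contraction evaluates to $-\sum_{m>0}\frac{(1-q^m)(1-t^{-m})}{1-p^m}\frac{(w/z)^m}{m}$, and using $[b_m,b_{-m}]=m\frac{1-p^m}{(qt^{-1}p)^m}\frac{1-q^m}{1-t^m}$ (after simplifying $p^{2m}(qt^{-1}p)^{-m}=(pq^{-1}t)^m$ together with $(1-t^{-m})t^m=-(1-t^m)$ and $(1-q^m)q^{-m}=-(1-q^{-m})$) the $b$-contraction evaluates to $-\sum_{m>0}\frac{p^m(1-q^{-m})(1-t^m)}{1-p^m}\frac{(z/w)^m}{m}$.

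To finish I would compare these sums with the logarithm of the target factor. Expanding each theta via $\Theta_p(x)=(p;p)_\infty\exp\big(-\sum_{n>0}\frac{p^n}{1-p^n}\frac{x^{-n}}{n}\big)\exp\big(-\sum_{n>0}\frac{1}{1-p^n}\frac{x^n}{n}\big)$, the $(p;p)_\infty$ prefactors cancel in the ratio, and with $u=w/z$ the positive-power part of the log is $-\sum_{n>0}\frac{1}{1-p^n}\frac{u^n}{n}\big(1+(qt^{-1})^n-q^n-t^{-n}\big)$. The identity $1+(qt^{-1})^n-q^n-t^{-n}=(1-q^n)(1-t^{-n})$ reproduces the $a$-contraction exactly, while the negative-power part uses $1+(qt^{-1})^{-n}-q^{-n}-t^n=(1-q^{-n})(1-t^n)$ to reproduce the $b$-contraction. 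Thus the total contraction equals the logarithm of the stated factor, proving ii). The expansions of $\Theta_p(qw/z)$ and $\Theta_p(t^{-1}w/z)$ in the denominator require $|p|<|qw/z|<1$ and $|p|<|t^{-1}w/z|<1$, which are precisely the convergence conditions recorded in (3.13).

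The main obstacle is the bookkeeping in part ii): one must track the asymmetric $p$-weights in the two commutators and verify that the $b$-sector contraction, which naturally produces negative powers of $w/z$ weighted by $p^m$, matches exactly the negative-power ($x^{-n}$) contribution coming from the $(px^{-1};p)_\infty$ factors of the theta functions. The two factorization identities $(1-q^n)(1-t^{-n})=1+(qt^{-1})^n-q^n-t^{-n}$ and its $n\to-n$ counterpart are what cause the otherwise trigonometric-looking contractions to reorganize into genuine theta functions, and applying them correctly on both sectors simultaneously is the crux of the argument.
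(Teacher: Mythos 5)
Your proposal is correct and takes essentially the same route as the paper: part i) by combining the commuting creation exponentials and matching the $a_{-k}$-, $b_{-k}$-coefficients against those of $\phi(p;qz)$ (and dually $\phi^{\ast}(p;qz)$ for condition 2), and part ii) by Wick's theorem with exactly the $a$- and $b$-sector contractions the paper computes, including the correct identities $(1-q^{m})(1-t^{-m})(qt^{-1})^{-m}=(1-q^{-m})(1-t^{m})$ and the convergence conditions. The only cosmetic difference is directional: you expand the logarithm of the target ratio $\Theta_{p}(w/z)\Theta_{p}(qt^{-1}w/z)/\Theta_{p}(qw/z)\Theta_{p}(t^{-1}w/z)$ and match it to the contractions, while the paper assembles the contractions into $(x;p)_{\infty}$ products and then recognizes the theta functions.
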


\begin{proof}[\textit{Proof}]
 i) We show 1) in (3.11) is satisfied. Then we have
\begin{align*}
\bm{:}\eta(p;tz)\phi(p;z)\bm{:}|0 \rangle=(\eta(p;tz))_{-}\phi(p;z)| 0\rangle.
\end{align*}
Hence what we have to show is $(\eta(p;tz))_{-}\phi(p;z)=\phi(p;qz)$. The proof of the relation is straightforward. Since the operator $(\eta(p;z))_{-}$ takes the form 
\begin{align*}
(\eta(p;z))_{-}=\exp\bigg(\sum_{n>0}\frac{1-t^{n}}{1-p^{n}}p^{n}b_{-n}\frac{z^{-n}}{n}\bigg)\exp\bigg(\sum_{n>0}\frac{1-t^{-n}}{1-p^{n}}a_{-n}\frac{z^{n}}{n}\bigg),
\end{align*}
we have
\begin{align*}
&\quad (\eta(p;tz))_{-}\phi(p;z) \\
&=\exp\bigg(\sum_{n>0}\frac{(qt^{-1}p)^{n}(1-t^{n})}{(1-q^{n})(1-p^{n})}\{t^{-n}(1-q^{n})(qt^{-1})^{-n}+1\}b_{-n}\frac{z^{-n}}{n}\bigg) \\
&\quad \times\exp\bigg(\sum_{n>0}\frac{1-t^{n}}{(1-q^{n})(1-p^{n})}\{-t^{-n}(1-q^{n})t^{n}+1\}a_{-n}\frac{z^{n}}{n}\bigg) \\
&=\exp\bigg(\sum_{n>0}\frac{(qt^{-1}p)^{n}(1-t^{n})}{(1-q^{n})(1-p^{n})}q^{-n}b_{-n}\frac{z^{-n}}{n}\bigg)\exp\bigg(\sum_{n>0}\frac{1-t^{n}}{(1-q^{n})(1-p^{n})}q^{n}a_{-n}\frac{z^{n}}{n}\bigg) \\
&=\phi(p;qz). 
\end{align*}
Next we show $\eta(p;z)$ satisfies 2) in (3.11). Due to the relation
\begin{align*}
\langle 0|\bm{:}\phi^{\ast}(p;z)\eta(p;z^{-1})\bm{:}=\langle 0|\phi^{\ast}(p;z)(\eta(p;z^{-1}))_{+},
\end{align*}
what we have to show is $\phi^{\ast}(p;z)(\eta(p;z^{-1}))_{+}=\phi^{\ast}(p;qz)$. $(\eta(p;z^{-1}))_{+}$ takes the form as
\begin{align*}
(\eta(p;z^{-1}))_{+}=\exp\bigg(-\sum_{n>0}\frac{1-t^{-n}}{1-p^{n}}p^{n}b_{n}\frac{z^{-n}}{n}\bigg)\exp\bigg(-\sum_{n>0}\frac{1-t^{n}}{1-p^{n}}a_{n}\frac{z^{n}}{n}\bigg),
\end{align*}
hence we have
\begin{align*}
\phi^{\ast}(p;z)(\eta(p;z^{-1}))_{+}
&=\exp\bigg(\sum_{n>0}\frac{(1-t^{n})(qt^{-1}p)^{n}}{(1-q^{n})(1-p^{n})}\{1+q^{-n}(1-q^{n})\}b_{n}\frac{z^{-n}}{n}\bigg) \\
&\quad \times \exp\bigg(\sum_{n>0}\frac{1-t^{n}}{(1-q^{n})(1-p^{n})}\{1-(1-q^{n})\}a_{n}\frac{z^{n}}{n}\bigg) \\
&=\phi^{\ast}(p;qz).
\end{align*}

ii) By Wick's theorem, we have the following :
\begin{align*}
&\quad \eta(p;z)\eta(p;w)\\
&=\exp\bigg(\sum_{m>0}\frac{1-t^{-m}}{1-p^{m}}p^{m}\frac{1-t^{m}}{1-p^{m}}p^{m}\cdot m\frac{1-p^{m}}{(qt^{-1}p)^{m}}\frac{1-q^{m}}{1-t^{m}}\frac{(z/w)^{m}}{m(-m)}\bigg)\\
&\quad \times\exp\bigg(\sum_{m>0}\frac{1-t^{m}}{1-p^{m}}\frac{1-t^{-m}}{1-p^{m}}\cdot m(1-p^{m})\frac{1-q^{m}}{1-t^{m}}\frac{(w/z)^{m}}{m(-m)}\bigg)\bm{:}\eta(p;z)\eta(p;w)\bm{:}\\
&=\exp\bigg(-\sum_{m>0}\frac{(1-q^{m})(1-t^{-m})(qt^{-1})^{-m}}{1-p^{m}}p^{m}\frac{(z/w)^{m}}{m}\bigg)\\
&\quad \times\exp\bigg(-\sum_{m>0}\frac{(1-q^{m})(1-t^{-m})}{1-p^{m}}\frac{(w/z)^{m}}{m}\bigg)\bm{:}\eta(p;z)\eta(p;w)\bm{:}\\
&=\frac{(q^{-1}tpz/w;p)_{\infty}(pz/w;p)_{\infty}}{(tpz/w;p)_{\infty}(q^{-1}pz/w;p)_{\infty}}\frac{(w/z;p)_{\infty}(qt^{-1}w/z;p)_{\infty}}{(qw/z;p)_{\infty}(t^{-1}w/z;p)_{\infty}}\bm{:}\eta(p;z)\eta(p;w)\bm{:}\\
&=\frac{\Theta_{p}(w/z)\Theta_{p}(qt^{-1}w/z)}{\Theta_{p}(qw/z)\Theta_{p}(t^{-1}w/z)}\bm{:}\eta(p;z)\eta(p;w)\bm{:}. \quad \Box
\end{align*}
\end{proof}

Since the relation (3.13) is an elliptic analog of the trigonometric case (2.10), we can understand $\eta(p;z)$ is an elliptic analog of $\eta(z)$. In the similar way, we can define an operator $\xi(p;z) : \mathcal{F} \to \mathcal{F}\otimes \mathbf{C}[[z,z^{-1}]]$ which is an elliptic analog of $\xi(z)$.

\begin{prop}[\textbf{Elliptic current $\xi(p;z)$}]
Let $\xi(p;z) : \mathcal{F} \to \mathcal{F}\otimes \mathbf{C}[[z,z^{-1}]]$ be an operator defined as follows :
\begin{align}
\xi(p;z)
:=\bm{:}\exp\bigg(\sum_{n\neq{0}}\frac{1-t^{-n}}{1-p^{|n|}}\gamma^{-|n|}p^{|n|}b_{n}\frac{z^{n}}{n}\bigg)\exp\bigg(\sum_{n\neq{0}}\frac{1-t^{n}}{1-p^{|n|}}\gamma^{|n|}a_{n}\frac{z^{-n}}{n}\bigg)\bm{:}.
\end{align}
Then $\xi(p;z)$ satisfies
\begin{align}
&\text{i)} \quad (\xi(p;\gamma z))_{-}\phi(p;z)=\phi(p;q^{-1}z), \quad \phi^{\ast}(p;z)(\xi(p;t\gamma^{-1}z^{-1}))_{+}=\phi^{\ast}(p;q^{-1}z), \\
&\text{ii)} \quad \xi(p;z)\xi(p;w)=\frac{\Theta_{p}(w/z)\Theta_{p}(q^{-1}tw/z)}{\Theta_{p}(q^{-1}w/z)\Theta_{p}(tw/z)}\bm{:}\xi(p;z)\xi(p;w)\bm{:} \quad
\left(\begin{cases}
\smallskip |p|<|q^{-1}w/z|<1, \\ 
\smallskip |p|<|tw/z|<1
\end{cases}\right).
\end{align}
\end{prop}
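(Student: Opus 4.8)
The plan is to follow verbatim the strategy used for the elliptic current $\eta(p;z)$ in Proposition 3.3: part i) is proved by extracting the explicit creation (resp.\ annihilation) part of $\xi(p;z)$ and combining exponentials mode by mode, while part ii) is an application of Wick's theorem followed by a repackaging of infinite products into theta functions. Indeed $\xi(p;z)$ differs from $\eta(p;z)$ only by the insertion of the scaling factors $\gamma^{\mp|n|}$ and an overall sign in the exponents, so the same machinery applies, with $\gamma^{2}=q^{-1}t$ doing all of the bookkeeping.

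For i), I would first write out the creation part
\[
(\xi(p;z))_{-}=\exp\Bigl(-\sum_{m>0}\frac{1-t^{m}}{1-p^{m}}\gamma^{-m}p^{m}b_{-m}\frac{z^{-m}}{m}\Bigr)\exp\Bigl(-\sum_{m>0}\frac{1-t^{-m}}{1-p^{m}}\gamma^{m}a_{-m}\frac{z^{m}}{m}\Bigr),
\]
together with the analogous $(\xi(p;z))_{+}$ collecting the modes $a_{m},b_{m}$ with $m>0$. Since $(\xi(p;\gamma z))_{-}$ and $\phi(p;z)$ involve only creation operators, and creation operators mutually commute, the product $(\xi(p;\gamma z))_{-}\phi(p;z)$ is obtained simply by adding the two exponents. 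Replacing $z\mapsto\gamma z$ produces the factors $\gamma^{\mp 2m}=(q^{-1}t)^{\mp m}$ in the $b$- and $a$-exponents respectively, and in each mode the bracketed coefficient collapses, through telescoping identities such as $q^{-m}(1-q^{m})+1=q^{-m}$, to exactly the coefficient appearing in $\phi(p;q^{-1}z)$; this yields the first identity in i). The second identity $\phi^{\ast}(p;z)(\xi(p;t\gamma^{-1}z^{-1}))_{+}=\phi^{\ast}(p;q^{-1}z)$ is entirely dual: one uses $(\xi(p;\cdot))_{+}$ (the annihilation modes) and left-multiplies by $\phi^{\ast}(p;z)$, combining exponents in the same way.

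For ii), I would invoke Wick's theorem (proved in the appendix) to write $\xi(p;z)\xi(p;w)=\exp(C_{b})\exp(C_{a})\,\bm{:}\xi(p;z)\xi(p;w)\bm{:}$, where $C_{a}$ (resp.\ $C_{b}$) is the single contraction of the annihilation $a$-modes (resp.\ $b$-modes) of the left factor against the creation modes of the right factor, weighted by $[a_{m},a_{-m}]$ (resp.\ $[b_{m},b_{-m}]$). Each of $C_{a},C_{b}$ is a power sum in $w/z$ or $z/w$, and here the factors $\gamma^{\pm 2m}=(q^{-1}t)^{\pm m}$ are precisely what convert the shifts $qt^{-1}$ occurring in the $\eta$-$\eta$ case into the shifts $q^{-1}t$ of the statement. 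Using $-\log(1-x)=\sum_{n>0}x^{n}/n$ I would rewrite the two exponentials as ratios of $(\,\cdot\,;p)_{\infty}$-products and then reassemble them into theta functions via $\Theta_{p}(x)=(p;p)_{\infty}(x;p)_{\infty}(px^{-1};p)_{\infty}$, arriving at $\frac{\Theta_{p}(w/z)\Theta_{p}(q^{-1}tw/z)}{\Theta_{p}(q^{-1}w/z)\Theta_{p}(tw/z)}$.

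The computation is mechanical and presents no conceptual obstacle beyond Proposition 3.3; the one delicate point is the bookkeeping in ii). One must split each contraction correctly into the part producing positive powers of $w/z$ and the part producing the reciprocal tail in $pz/w$, so that the two halves recombine into the \emph{symmetric} theta functions rather than into bare $(\,\cdot\,;p)_{\infty}$ factors, and one must respect the stated regions $|p|<|q^{-1}w/z|<1$ and $|p|<|tw/z|<1$, which are exactly what is needed to justify the geometric expansions and to identify the resulting series with the theta-function ratio.
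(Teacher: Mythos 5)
Your proposal is correct and follows essentially the same route as the paper: part i) via mode-by-mode addition of the exponents of commuting creation (resp.\ annihilation) operators, which is exactly the argument of Proposition 3.3 i) that the paper invokes and omits to repeat, and part ii) via the same Wick's theorem contraction of the $a$- and $b$-modes, reassembled into theta functions through $\Theta_{p}(x)=(p;p)_{\infty}(x;p)_{\infty}(px^{-1};p)_{\infty}$. The bookkeeping identities you highlight, such as $q^{-m}(1-q^{m})+1=q^{-m}$ and $\gamma^{\pm 2m}=(q^{-1}t)^{\pm m}$, are precisely the ones driving the paper's displayed computation.
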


\begin{proof}[\textit{Proof}]
Since the proof of (3.15) is quite similar to the proof of i) in the proposition 3.3, we omit it. (3.16) is shown as follows :
\begin{align*}
&\quad \xi(p;z)\xi(p;w) \\
&=\exp\bigg(\sum_{m>0}\frac{1-t^{-m}}{1-p^{m}}\gamma^{-m}p^{m}\frac{1-t^{m}}{1-p^{m}}\gamma^{-m}p^{m}\cdot m\frac{1-p^{m}}{(qt^{-1}p)^{m}}\frac{1-q^{m}}{1-t^{m}}\frac{(z/w)^{m}}{m(-m)}\bigg)\\
&\quad\times\exp\bigg(\sum_{m>0}\frac{1-t^{m}}{1-p^{m}}\gamma^{m}\frac{1-t^{-m}}{1-p^{m}}\gamma^{m}\cdot m(1-p^{m})\frac{1-q^{m}}{1-t^{m}}\frac{(w/z)^{m}}{m(-m)}\bigg)\bm{:}\xi(p;z)\xi(p;w)\bm{:}\\
&=\exp\bigg(-\sum_{m>0}\frac{(1-q^{m})(1-t^{-m})}{1-p^{m}}p^{m}\frac{(z/w)^{m}}{m}\bigg)\\
&\quad \times\exp\bigg(-\sum_{m>0}\frac{(1-q^{m})(1-t^{-m})(qt^{-1})^{-m}}{1-p^{m}}\frac{(w/z)^{m}}{m}\bigg)\bm{:}\xi(p;z)\xi(p;w)\bm{:}\\
&=\frac{\Theta_{p}(w/z)\Theta_{p}(q^{-1}tw/z)}{\Theta_{p}(q^{-1}w/z)\Theta_{p}(tw/z)}\bm{:}\xi(p;z)\xi(p;w)\bm{:}. \quad \Box
\end{align*}
\end{proof}

As in the trigonometric case, it is natural to calculate a commutation relation between $\eta(p;z)$ and $\xi(p;z)$. For the calculation of $[\eta(p;z),\xi(p;w)]$, we need a lemma which gives a relation between the theta function $\Theta_{p}(x)$ and the delta function $\delta(x)$.

\begin{lemma}
For the theta function $\Theta_{p}(x)$ and the delta function $\delta(x)$, the following relations are satisfied :
\begin{align}
&\frac{1}{\Theta_{p}(x)}+\frac{x^{-1}}{\Theta_{p}(x^{-1})}=\frac{1}{(p;p)_{\infty}^{3}}\delta(x), \\
&\frac{1}{\Theta_{p}(x)}+\frac{x^{-1}}{\Theta_{p}(px)}=\frac{1}{(p;p)_{\infty}^{3}}\delta(x).
\end{align}
This leads that
\begin{align}
\frac{1}{\Theta_{p}(px)}=\frac{1}{\Theta_{p}(x^{-1})}.
\end{align}
\end{lemma}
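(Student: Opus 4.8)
The plan is to read this as the theta-function lift of the elementary hyperfunction identity $\delta(x)=\frac{1}{1-x}+\frac{x^{-1}}{1-x^{-1}}$ already used in the proof of Proposition 2.7. Note first that as \emph{meromorphic} functions the two summands on the left of the first relation cancel: quasi-periodicity $\Theta_{p}(x)=-x\Theta_{p}(x^{-1})$ gives $\frac{x^{-1}}{\Theta_{p}(x^{-1})}=-\frac{1}{\Theta_{p}(x)}$. Hence all the content of the lemma lies in the fact that the two terms are to be expanded as formal Laurent series in two \emph{different} annuli, and it is the pole crossed between these annuli that manufactures the delta function, exactly as in the Sato-hyperfunction reading of $\delta$ in Section 2.

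First I would locate the poles of $1/\Theta_{p}(x)$. From $\Theta_{p}(x)=(p;p)_{\infty}(x;p)_{\infty}(px^{-1};p)_{\infty}$ the zeros of $\Theta_{p}$ are simple and sit exactly at $x=p^{n}$, $n\in\mathbf{Z}$, so $1/\Theta_{p}(x)$ is holomorphic on each of the two adjacent annuli $A_{-}:|p|<|x|<1$ and $A_{+}:1<|x|<|p|^{-1}$, separated only by the simple pole at $x=1$. The one computation that matters is the residue there: writing $(x;p)_{\infty}=(1-x)(px;p)_{\infty}$ one gets $\Theta_{p}(x)\sim(p;p)_{\infty}^{3}(1-x)$ as $x\to1$, hence $\Theta_{p}'(1)=-(p;p)_{\infty}^{3}$ and $\operatorname{Res}_{x=1}\tfrac{1}{\Theta_{p}(x)}=-\tfrac{1}{(p;p)_{\infty}^{3}}$.

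Next I would compare the two expansions. Writing $\frac{1}{\Theta_{p}(x)}=\sum_{n}c_{n}x^{n}$ on $A_{-}$ and $=\sum_{n}c_{n}'x^{n}$ on $A_{+}$, a contour shift from $|x|=\rho\in A_{-}$ to $|x|=\rho'\in A_{+}$ picks up precisely the residue at $x=1$, giving $c_{n}-c_{n}'=-\operatorname{Res}_{x=1}\frac{x^{-n-1}}{\Theta_{p}(x)}=\frac{1}{(p;p)_{\infty}^{3}}$ for every $n$, i.e. $\sum_{n}(c_{n}-c_{n}')x^{n}=\frac{1}{(p;p)_{\infty}^{3}}\delta(x)$. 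Quasi-periodicity then identifies the outer expansion with the second summand: expanding $\frac{x^{-1}}{\Theta_{p}(x^{-1})}=-\frac{1}{\Theta_{p}(x)}$ in the region $|x^{-1}|<1$, i.e. on $A_{+}$, gives coefficients $-c_{n}'$, so the left-hand side of the first relation, read as ($1/\Theta_{p}$ on $A_{-}$) plus ($x^{-1}/\Theta_{p}(x^{-1})$ on $A_{+}$), has $n$-th coefficient $c_{n}-c_{n}'=1/(p;p)_{\infty}^{3}$. This is the first relation.

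For the remaining two I would argue purely algebraically. Quasi-periodicity gives $\Theta_{p}(px)=-x^{-1}\Theta_{p}(x)=-x^{-1}(-x)\Theta_{p}(x^{-1})=\Theta_{p}(x^{-1})$, which is the third relation $1/\Theta_{p}(px)=1/\Theta_{p}(x^{-1})$; substituting this into the first relation immediately yields the second. The one place to be careful, and the main obstacle, is the bookkeeping of the expansion regions: the identity is simply false if both terms are read as expansions of the same meromorphic function, so throughout I must keep the two summands anchored to $A_{-}$ and $A_{+}$ respectively, and verify that the residue sign together with the factor from $\Theta_{p}'(1)$ conspire to give exactly the constant $1/(p;p)_{\infty}^{3}$. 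An alternative to the contour argument, presumably the route of the appendix, is to produce the coefficients $c_{n}$ explicitly from Ramanujan's ${}_1\psi_1$ summation and check $c_{n}-c_{n}'=1/(p;p)_{\infty}^{3}$ directly; this avoids complex-analytic language but needs the same residue constant as input.
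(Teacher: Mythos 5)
Your proof is correct, but it takes a genuinely different route from the paper's. The paper works entirely inside formal power series: it factors $(x;p)_{\infty}=(1-x)(px;p)_{\infty}$, substitutes the formal identity $\frac{1}{1-x}=\delta(x)-\frac{x^{-1}}{1-x^{-1}}$ (the same trick used in the proof of Proposition 2.6), uses the delta-function property $\delta(x)f(x)=\delta(x)f(1)$ to turn $(px;p)_{\infty}(px^{-1};p)_{\infty}\delta(x)$ into $(p;p)_{\infty}^{2}\delta(x)$, and recombines the remaining infinite products into $\Theta_{p}(x^{-1})$; the second relation (3.18) is proved by the same manipulation applied to $\Theta_{p}(px)$, and only then is (3.19) obtained, by subtracting the two delta identities. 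You instead prove (3.17) analytically --- locating the simple pole of $1/\Theta_{p}(x)$ at $x=1$, computing $\Theta_{p}'(1)=-(p;p)_{\infty}^{3}$, and exhibiting the delta function as the coefficientwise discrepancy $c_{n}-c_{n}'$ between the Laurent expansions in the two adjacent annuli $|p|<|x|<1$ and $1<|x|<|p|^{-1}$ --- then you get (3.19) for free from quasi-periodicity, $\Theta_{p}(px)=-x^{-1}\Theta_{p}(x)=\Theta_{p}(x^{-1})$, and deduce (3.18) as a corollary of (3.17) and (3.19), reversing the paper's logical order. Both computations produce the same constant, and your residue bookkeeping ($c_{n}-c_{n}'=-\operatorname{Res}_{x=1}x^{-n-1}/\Theta_{p}(x)=1/(p;p)_{\infty}^{3}$) is accurate. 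What your approach buys is transparency: it shows that $1/(p;p)_{\infty}^{3}$ is exactly $-1/\Theta_{p}'(1)$, i.e. minus the residue at the pole crossed between the two expansion regions, and it makes literal the Sato-hyperfunction reading that the paper only gestures at in Remark 3.6; the cost is that you need convergence and contour arguments, whereas the paper's manipulation is purely algebraic and never leaves the formal-series setting. One small correction to an aside: the route via Ramanujan's ${}_{1}\psi_{1}$ summation is what the paper uses for Lemma 4.1(2) (the expansion of $\Theta_{p}(az)/\Theta_{p}(z)$), not for this lemma, so it is not "the route of the appendix" for Lemma 3.5.
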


\begin{proof}[\textit{Proof}]
To prove the relation (3.17), let us recall the formal expression of the delta function as
\begin{align*}
\delta(x)=\sum_{n\in{\mathbf{Z}}}x^{n}=\frac{1}{1-x}+\frac{x^{-1}}{1-x^{-1}}.
\end{align*}
By this expression, we have the following :
\begin{align*}
\frac{1}{\Theta_{p}(x)}&=\frac{1}{(p;p)_{\infty}(x;p)_{\infty}(px^{-1};p)_{\infty}}\\
&=\frac{1}{(p;p)_{\infty}}\frac{1}{(1-x)(px;p)_{\infty}(px^{-1};p)_{\infty}}\\
&=\frac{1}{(p;p)_{\infty}}\Big(\delta(x)-\frac{x^{-1}}{1-x^{-1}}\Big)\frac{1}{(px;p)_{\infty}(px^{-1};p)_{\infty}}\\
&=\frac{1}{(p;p)_{\infty}^{3}}\delta(x)-\frac{x^{-1}}{\Theta_{p}(x^{-1})}. 
\end{align*}
The relation (3.18) is shown in the similar way :
\begin{align*}
\frac{1}{\Theta_{p}(px)}&=\frac{1}{(p;p)_{\infty}(px;p)_{\infty}(x^{-1};p)_{\infty}} \\
&=\frac{1}{(p;p)_{\infty}(px;p)_{\infty}}\frac{1}{(1-x^{-1})(px^{-1};p)_{\infty}} \\
&=\frac{1}{(p;p)_{\infty}(px;p)_{\infty}}\Big(\delta(x)-\frac{x}{1-x}\Big)\frac{1}{(px^{-1};p)_{\infty}} \\
&=\frac{1}{(p;p)_{\infty}^{3}}\delta(x)-\frac{x}{\Theta_{p}(x)}.
\end{align*}
By the subtraction $(3.17)-(3.18)$, we have $1/\Theta_{p}(px)=1/\Theta_{p}(x^{-1})$. \quad $\Box$ 
\end{proof}

\begin{remark}
The relation (3.17), (3.18) should be also recognized in the context of the Sato hyperfunction [5].
\end{remark}

From this lemma, we can calculate $[\eta(p;z),\xi(p;w)]$ as follows.

\begin{prop}[\textbf{Commutator $[\eta(p;z),\xi(p;w)]$}] 
Let $\varphi^{\pm}(p;z) : \mathcal{F} \to \mathcal{F}\otimes \mathbf{C}[[z,z^{-1}]]$ be operators defined as
\begin{align}
\varphi^{+}(p;z):=\bm{:}\eta(p;\gamma^{1/2}z)\xi(p;\gamma^{-1/2}z)\bm{:}, \quad \varphi^{-}(p;z):=\bm{:}\eta(p;\gamma^{-1/2}z)\xi(p;\gamma^{1/2}z)\bm{:}.
\end{align}
Then the relation holds :
\begin{align}
[\eta(p;z),\xi(p;w)]=\frac{\Theta_{p}(q)\Theta_{p}(t^{-1})}{(p;p)_{\infty}^{3}\Theta_{p}(qt^{-1})}\bigg\{\delta\Big(\gamma\frac{w}{z}\Big)\varphi^{+}(p;\gamma^{1/2}w)-\delta\Big(\gamma^{-1}\frac{w}{z}\Big)\varphi^{-}(p;\gamma^{-1/2}w)\bigg\}.
\end{align}
\end{prop}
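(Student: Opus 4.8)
The plan is to evaluate the two orderings $\eta(p;z)\xi(p;w)$ and $\xi(p;w)\eta(p;z)$ separately by Wick's theorem, subtract them, and localize the resulting scalar difference onto delta functions by Lemma 3.6, mirroring the trigonometric mechanism behind Proposition 2.8. First I would run Wick's theorem exactly as in Proposition 3.3 ii). Since $[a_{m},b_{n}]=0$, the $a$-modes and $b$-modes contract independently: the $a$-contraction produces the positive-power part and the $b$-contraction, carrying the extra factor $p^{|m|}$, produces the negative-power part, in parallel with the elliptic deformation procedure of the Introduction. Writing $v:=w/z$, the combined prefactor of the common normal-ordered product $\bm{:}\eta(p;z)\xi(p;w)\bm{:}$ has exponent
\[
\sum_{m>0}\frac{(1-q^{m})(1-t^{-m})\gamma^{m}}{(1-p^{m})m}\bigl(v^{m}+p^{m}v^{-m}\bigr).
\]

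Using $\exp\bigl(\sum_{m>0}\frac{c^{m}}{(1-p^{m})m}v^{m}\bigr)=(cv;p)_{\infty}^{-1}$ and its $v^{-m}$ analogue, the four values $c\in\{\gamma,\,q\gamma,\,t^{-1}\gamma,\,\gamma^{-1}\}$ (note $qt^{-1}\gamma=\gamma^{-1}$) produce eight $(\,\cdot\,;p)_{\infty}$ factors. The identity $q\gamma=t\gamma^{-1}$, i.e. $\gamma^{2}=t/q$, lets them recombine through $\Theta_{p}(x)=(p;p)_{\infty}(x;p)_{\infty}(px^{-1};p)_{\infty}$ into
\[
\eta(p;z)\xi(p;w)=\frac{\Theta_{p}(q\gamma v)\,\Theta_{p}(t^{-1}\gamma v)}{\Theta_{p}(\gamma v)\,\Theta_{p}(\gamma^{-1}v)}\bm{:}\eta(p;z)\xi(p;w)\bm{:},
\]
while $\xi(p;w)\eta(p;z)$ gives the same normal-ordered product with $v$ replaced by $v^{-1}$. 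As a consistency check, the limit $p\to0$ with $\Theta_{p}(x)\to1-x$ recovers the trigonometric prefactors.

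I would then write $[\eta(p;z),\xi(p;w)]=(f-\tilde f)\bm{:}\eta(p;z)\xi(p;w)\bm{:}$ with $f=f(v)$ the displayed prefactor and $\tilde f=f(v^{-1})$. These are expansions of a single meromorphic function in conjugate annuli, so the difference is supported where their regions meet, namely at the poles $\gamma v=1$ and $\gamma^{-1}v=1$. To extract the first I isolate $f=R(v)/\Theta_{p}(\gamma v)$ with $R(v)=\Theta_{p}(q\gamma v)\Theta_{p}(t^{-1}\gamma v)/\Theta_{p}(\gamma^{-1}v)$, observe that the matching factor of $\tilde f$ is $\tilde R(v)/\Theta_{p}((\gamma v)^{-1})$, and apply Lemma 3.6 in the form $1/\Theta_{p}(x)=\delta(x)/(p;p)_{\infty}^{3}-x^{-1}/\Theta_{p}(x^{-1})$ with $x=\gamma v$. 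The same step at $\gamma^{-1}v=1$ handles the second pole.

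The decisive computation is that the non-delta remainder at each pole is regular. At $v=\gamma^{-1}$ one finds $R(\gamma^{-1})=\Theta_{p}(q)\Theta_{p}(t^{-1})/\Theta_{p}(qt^{-1})$ and, for the companion term, $\Theta_{p}(t)\Theta_{p}(q^{-1})/\Theta_{p}(q^{-1}t)$, which by $\Theta_{p}(x^{-1})=-x^{-1}\Theta_{p}(x)$ equals $-R(\gamma^{-1})$; hence the remainder's numerator vanishes and no spurious delta survives. Since $\delta(\gamma v)$ localizes $z=\gamma w$, I replace $R(v)$ by $R(\gamma^{-1})$ and $\bm{:}\eta(p;z)\xi(p;w)\bm{:}$ by $\bm{:}\eta(p;\gamma w)\xi(p;w)\bm{:}=\varphi^{+}(p;\gamma^{1/2}w)$; the $v=\gamma$ pole yields $-R(\gamma^{-1})\delta(\gamma^{-1}v)\varphi^{-}(p;\gamma^{-1/2}w)$ after the analogous cancellation. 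Collecting the two contributions, together with the factor $(p;p)_{\infty}^{-3}$ from Lemma 3.6, gives precisely (3.22). I expect the localization to be the main obstacle: one must justify that $f$ and $\tilde f$ are boundary values of one meromorphic function so that Lemma 3.6 applies factor by factor, and that the remaining theta zeros, away from $v\in\{\gamma^{-1},\gamma\}$, produce no extra delta terms.
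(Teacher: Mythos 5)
Your proposal is correct and follows essentially the same route as the paper's proof: Wick's theorem for the two orderings (your prefactor $\Theta_{p}(q\gamma v)\Theta_{p}(t^{-1}\gamma v)/\Theta_{p}(\gamma v)\Theta_{p}(\gamma^{-1}v)$ agrees with the paper's, since $t^{-1}\gamma=q^{-1}\gamma^{-1}$), then delta-extraction via the theta--delta identity (which is Lemma 3.5, not 3.6), then evaluation of the coefficient $\Theta_{p}(q)\Theta_{p}(t^{-1})/\Theta_{p}(qt^{-1})$ on the delta support and the identifications $\bm{:}\eta(p;\gamma^{\pm1}w)\xi(p;w)\bm{:}=\varphi^{\pm}(p;\gamma^{\pm1/2}w)$. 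The localization obstacle you flag at the end is sidestepped in the paper by a purely algebraic device: it first uses $\Theta_{p}(x)=-x\Theta_{p}(x^{-1})$ to pull out the common numerator $\Theta_{p}(q\gamma w/z)\Theta_{p}(q^{-1}\gamma^{-1}w/z)$ (at the cost of a factor $(z/w)^{2}$), and then writes the resulting difference of denominator terms as an exact telescoping sum in which the cross terms cancel identically, so both delta functions appear at once and there is no remainder whose regularity needs to be argued.
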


\begin{proof}[\textit{Proof}]
By Wick's theorem, we have the following :
\begin{align*}
&\quad \eta(p;z)\xi(p;w) \\
&=\exp\bigg(-\sum_{m>0}\frac{1-t^{-m}}{1-p^{m}}p^{m}\frac{1-t^{m}}{1-p^{m}}\gamma^{-m}p^{m}\cdot m\frac{1-p^{m}}{(qt^{-1}p)^{m}}\frac{1-q^{m}}{1-t^{m}}\frac{(z/w)^{m}}{m(-m)}\bigg) \\
&\quad\times\exp\bigg(-\sum_{m>0}\frac{1-t^{m}}{1-p^{m}}\frac{1-t^{-m}}{1-p^{m}}\gamma^{m}\cdot m(1-p^{m})\frac{1-q^{m}}{1-t^{m}}\frac{(w/z)^{m}}{m(-m)}\bigg)\bm{:}\eta(p;z)\xi(p;w)\bm{:}\\
&=\exp\bigg(\sum_{m>0}\frac{(1-q^{m})(1-t^{-m})}{1-p^{m}}\gamma^{m}p^{m}\frac{(z/w)^{m}}{m}\bigg)\\
&\quad \times\exp\bigg(\sum_{m>0}\frac{(1-q^{m})(1-t^{-m})}{1-p^{m}}\gamma^{m}\frac{(w/z)^{m}}{m}\bigg)\bm{:}\eta(p;z)\xi(p;w)\bm{:}\\
&=\frac{\Theta_{p}(q\gamma w/z)\Theta_{p}(q^{-1}\gamma^{-1}w/z)}{\Theta_{p}(\gamma w/z)\Theta_{p}(\gamma^{-1}w/z)}\bm{:}\eta(p;z)\xi(p;w)\bm{:}, \\
\end{align*}
\begin{align*}
&\quad \xi(p;w)\eta(p;z) \\
&=\exp\bigg(-\sum_{m>0}\frac{1-t^{-m}}{1-p^{m}}\gamma^{-m}p^{m}\frac{1-t^{m}}{1-p^{m}}p^{m}\cdot m\frac{1-p^{m}}{(qt^{-1}p)^{m}}\frac{1-q^{m}}{1-t^{m}}\frac{(w/z)^{m}}{m(-m)}\bigg) \\
&\quad\times\exp\bigg(-\sum_{m>0}\frac{1-t^{m}}{1-p^{m}}\gamma^{m}\frac{1-t^{-m}}{1-p^{m}}\cdot m(1-p^{m})\frac{1-q^{m}}{1-t^{m}}\frac{(z/w)^{m}}{m(-m)}\bigg)\bm{:}\eta(p;z)\xi(p;w)\bm{:}\\
&=\exp\bigg(\sum_{m>0}\frac{(1-q^{m})(1-t^{-m})}{1-p^{m}}\gamma^{m}p^{m}\frac{(w/z)^{m}}{m}\bigg)\\
&\quad \times\exp\bigg(\sum_{m>0}\frac{(1-q^{m})(1-t^{-m})}{1-p^{m}}\gamma^{m}\frac{(z/w)^{m}}{m}\bigg)\bm{:}\eta(p;z)\xi(p;w)\bm{:}\\
&=\frac{\Theta_{p}(q\gamma z/w)\Theta_{p}(q^{-1}\gamma^{-1}z/w)}{\Theta_{p}(\gamma z/w)\Theta_{p}(\gamma^{-1}z/w)}\bm{:}\eta(p;z)\xi(p;w)\bm{:}.
\end{align*}
Then we have
\begin{align*}
&\quad [\eta(p;z),\xi(p;w)] \\
&=\bigg\{\frac{\Theta_{p}(q\gamma w/z)\Theta_{p}(q^{-1}\gamma^{-1}w/z)}{\Theta_{p}(\gamma w/z)\Theta_{p}(\gamma^{-1}w/z)}-\frac{\Theta_{p}(q\gamma z/w)\Theta_{p}(q^{-1}\gamma^{-1}z/w)}{\Theta_{p}(\gamma z/w)\Theta_{p}(\gamma^{-1}z/w)}\bigg\}\bm{:}\eta(p;z)\xi(p;w)\bm{:} \\
&=\Theta_{p}(q\gamma w/z)\Theta_{p}(q^{-1}\gamma^{-1}w/z)\bigg\{\frac{1}{\Theta_{p}(\gamma w/z)\Theta_{p}(\gamma^{-1}w/z)}-\frac{(z/w)^{2}}{\Theta_{p}(\gamma z/w)\Theta_{p}(\gamma^{-1}z/w)}\bigg\}\\
&\hskip 12cm \times\bm{:}\eta(p;z)\xi(p;w)\bm{:}.
\end{align*}
From the lemma 3.5, the following holds :
\begin{align*}
&\quad \frac{1}{\Theta_{p}(\gamma x)\Theta_{p}(\gamma^{-1}x)}-\frac{x^{-2}}{\Theta_{p}(\gamma x^{-1})\Theta_{p}(\gamma^{-1}x^{-1})} \\
&=\bigg\{\frac{1}{\Theta_{p}(\gamma x)}+\frac{\gamma^{-1}x^{-1}}{\Theta_{p}(\gamma^{-1}x^{-1})}\bigg\}\frac{1}{\Theta_{p}(\gamma^{-1}x)}
-\frac{\gamma^{-1}x^{-1}}{\Theta_{p}(\gamma^{-1}x^{-1})}\bigg\{\frac{1}{\Theta_{p}(\gamma^{-1}x)}+\frac{\gamma x^{-1}}{\Theta_{p}(\gamma x^{-1})}\bigg\} \\
&=\frac{1}{(p;p)_{\infty}^{3}}\delta(\gamma x)\frac{1}{\Theta_{p}(\gamma^{-1}x)}-\frac{\gamma^{-1}x^{-1}}{\Theta_{p}(\gamma^{-1}x^{-1})}\frac{1}{(p;p)_{\infty}^{3}}\delta(\gamma^{-1}x) \\
&=\frac{1}{(p;p)_{\infty}^{3}\Theta_{p}(qt^{-1})}\{\delta(\gamma x)-\gamma^{-2}\delta(\gamma^{-1}x)\}.
\end{align*}
This leads that
\begin{align*}
\frac{\Theta_{p}(q\gamma x)\Theta_{p}(q^{-1}\gamma^{-1}x)}{\Theta_{p}(\gamma x)\Theta_{p}(\gamma^{-1}x)}-\frac{\Theta_{p}(q\gamma x^{-1})\Theta_{p}(q^{-1}\gamma^{-1}x^{-1})}{\Theta_{p}(\gamma x^{-1})\Theta_{p}(\gamma^{-1}x^{-1})}
=\frac{\Theta_{p}(q)\Theta_{p}(t^{-1})}{(p;p)_{\infty}^{3}\Theta_{p}(qt^{-1})}\{\delta(\gamma x)-\delta(\gamma^{-1}x)\}.
\end{align*}
By this relation and the definition of $\varphi^{\pm}(p;z)$, we have (3.21). \quad $\Box$
\end{proof}

The commutation relation (3.21) is also an elliptic analog of the trigonometric case in (2.17). Furthermore, we can show the following theorem by Wick's theorem.

\begin{theorem}[\textbf{Relations of $\eta(p;z),\,\xi(p;z)$ and $\varphi^{\pm}(p;z)$}] 
We define the structure function $g_{p}(x)$ as
\begin{align}
g_{p}(x):=\frac{\Theta_{p}(qx)\Theta_{p}(t^{-1}x)\Theta_{p}(q^{-1}tx)}{\Theta_{p}(q^{-1}x)\Theta_{p}(tx)\Theta_{p}(qt^{-1}x)}.
\end{align}
Then $\eta(p;z),\,\xi(p;z)$ and $\varphi^{\pm}(p;z)$ satisfy the relations :
\begin{align}
&\hskip 0.5cm [\varphi^{\pm}(p;z), \varphi^{\pm}(p;w)]=0, \quad \varphi^{+}(p;z)\varphi^{-}(p;w)=\frac{g_{p}(\gamma z/w)}{g_{p}(\gamma^{-1}z/w)}\varphi^{-}(p;w)\varphi^{+}(p;z),\\
&\hskip 3cm \varphi^{\pm}(p;z)\eta(p;w)=g_{p}\Big(\gamma^{\pm\frac{1}{2}}\frac{z}{w}\Big)\eta(p;w)\varphi^{\pm}(p;z),\\
&\hskip 3cm \varphi^{\pm}(p;z)\xi(p;w)=g_{p}\Big(\gamma^{\mp\frac{1}{2}}\frac{z}{w}\Big)^{-1}\xi(p;w)\varphi^{\pm}(p;z),\\
&\hskip 3cm \eta(p;z)\eta(p;w)=g_{p}\Big(\frac{z}{w}\Big)\eta(p;w)\eta(p;z),\\
&\hskip 3cm \xi(p;z)\xi(p;w)=g_{p}\Big(\frac{z}{w}\Big)^{-1}\xi(p;w)\xi(p;z),\\
&[\eta(p;z),\xi(p;w)]
=\frac{\Theta_{p}(q)\Theta_{p}(t^{-1})}{(p;p)_{\infty}^{3}\Theta_{p}(qt^{-1})}\bigg\{\delta\Big(\gamma\frac{w}{z}\Big)\varphi^{+}(p;\gamma^{1/2}w)-\delta\Big(\gamma^{-1}\frac{w}{z}\Big)\varphi^{-}(p;\gamma^{-1/2}w)\bigg\}.
\end{align}
\end{theorem}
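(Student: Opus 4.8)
The plan is to obtain every relation by computing the relevant operator product expansions with Wick's theorem (proved in the appendix) and then rewriting the resulting quotients of theta functions in terms of the structure function $g_{p}$ by means of the inversion relation $\Theta_{p}(x)=-x\Theta_{p}(x^{-1})$ recorded on page 2. Several of the elementary contractions are already available: Proposition 3.3 ii) and Proposition 3.4 ii) give the OPE coefficients of $\eta(p;z)\eta(p;w)$ and $\xi(p;z)\xi(p;w)$, the proof of Proposition 3.7 records those of $\eta(p;z)\xi(p;w)$ and $\xi(p;w)\eta(p;z)$, and the last relation of the theorem is precisely Proposition 3.7. Thus the remaining work is to assemble these building blocks for the composite currents $\varphi^{\pm}(p;z)$ and to carry out the theta-function algebra.

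First I would treat the two relations $\eta(p;z)\eta(p;w)=g_{p}(z/w)\eta(p;w)\eta(p;z)$ and $\xi(p;z)\xi(p;w)=g_{p}(z/w)^{-1}\xi(p;w)\xi(p;z)$. Since the normal-ordered product $\bm{:}\eta(p;z)\eta(p;w)\bm{:}$ is symmetric in $z$ and $w$, dividing the OPE of Proposition 3.3 ii) by its image under $z\leftrightarrow w$ gives
\begin{align*}
\frac{\eta(p;z)\eta(p;w)}{\eta(p;w)\eta(p;z)}=\frac{\Theta_{p}(w/z)\Theta_{p}(qt^{-1}w/z)}{\Theta_{p}(qw/z)\Theta_{p}(t^{-1}w/z)}\cdot\frac{\Theta_{p}(qz/w)\Theta_{p}(t^{-1}z/w)}{\Theta_{p}(z/w)\Theta_{p}(qt^{-1}z/w)}.
\end{align*}
Applying $\Theta_{p}(x)=-x\Theta_{p}(x^{-1})$ to each factor carrying $w/z$, the monomial prefactors cancel in pairs, one common theta factor drops out, and the right-hand side collapses to exactly $g_{p}(z/w)$. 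The same computation starting from Proposition 3.4 ii) yields $g_{p}(z/w)^{-1}$ for $\xi$.

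For the mixed relations I would use that $\varphi^{\pm}(p;z)=\bm{:}\eta(p;\gamma^{\pm1/2}z)\xi(p;\gamma^{\mp1/2}z)\bm{:}$, so Wick's theorem factors the contraction of $\varphi^{\pm}(p;z)$ against any of $\eta(p;w)$, $\xi(p;w)$, $\varphi^{\mp}(p;w)$ into a product of the elementary two-point coefficients listed above. For instance $\varphi^{+}(p;z)\eta(p;w)$ is the product of the $\eta(p;\gamma^{1/2}z)\eta(p;w)$ coefficient and the $\xi(p;\gamma^{-1/2}z)\eta(p;w)$ coefficient, times the symmetric normal-ordered product; forming the ratio against $\eta(p;w)\varphi^{+}(p;z)$, substituting $\gamma=(qt^{-1})^{-1/2}$, and simplifying with the inversion relation produces $g_{p}(\gamma^{1/2}z/w)$, and similarly for the other sign and for $\xi$. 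The relations $[\varphi^{\pm}(p;z),\varphi^{\pm}(p;w)]=0$ and the $\varphi^{+}\varphi^{-}$ braiding follow from the four-fold contractions of $\varphi^{\pm}(p;z)\varphi^{\pm}(p;w)$: in the commuting case one checks that the combined coefficient is invariant under $z\leftrightarrow w$, and in the $\varphi^{+}\varphi^{-}$ case the four elementary factors reorganize into $g_{p}(\gamma z/w)/g_{p}(\gamma^{-1}z/w)$. The final commutator $[\eta(p;z),\xi(p;w)]$ is Proposition 3.7 verbatim.

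The main obstacle is the theta-function bookkeeping rather than anything conceptual: the raw OPE ratios are quotients of many theta functions with shifted and inverted arguments, and they reduce to $g_{p}$ only after the $-x$ prefactors coming from $\Theta_{p}(x)=-x\Theta_{p}(x^{-1})$ cancel exactly. Keeping track of how the $\gamma^{\pm1/2}$ shifts in $\varphi^{\pm}$ interact with the $q$-, $t$- and $\gamma$-shifts in the elementary coefficients, so that everything collapses to a single $g_{p}$ with the correct argument, is the delicate part, and the $\varphi^{+}\varphi^{-}$ relation, with four contractions, is the most laborious case.
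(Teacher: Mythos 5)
Your proposal is correct: the Wick factorization you invoke is legitimate because each $\varphi^{\pm}(p;z)$ is itself a single normal-ordered exponential, so its contraction against $\eta$, $\xi$, or another $\varphi$ is additive in the exponent and hence multiplicative in the coefficient, and your $\eta\eta$/$\xi\xi$ computation via $\Theta_{p}(x)=-x\Theta_{p}(x^{-1})$ is exactly right (the monomial prefactors cancel and the common factor $\Theta_{p}(z/w)$ drops out). However, for the $\varphi$-relations your route genuinely differs from the paper's. The paper does not decompose $\varphi^{\pm}$ into two-point blocks; instead it first derives the explicit mode expansions (3.29)--(3.30), which show that $\varphi^{+}(p;z)$ contains only annihilation operators and $\varphi^{-}(p;z)$ only creation operators, and the exponential-series form (3.31) of $g_{p}(x)$ together with $g_{p}(x^{-1})=g_{p}(x)^{-1}$; all remaining relations are then read off by matching exponents mode by mode, with no theta-function manipulation at all. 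This buys two things your approach must pay for: $[\varphi^{\pm}(p;z),\varphi^{\pm}(p;w)]=0$ becomes literally trivial (purely annihilation operators commute among themselves, since $[a_{m},a_{n}]\propto\delta_{m+n,0}$), whereas in your scheme you must check that the product of four theta ratios is symmetric under $z\leftrightarrow w$ (it in fact collapses identically to $1$, reflecting the absence of contractions); and the identification of the answer with $g_{p}$ is automatic at the level of power series, whereas you must track how the $\gamma^{\pm 1/2}$ shifts recombine under repeated theta inversion --- the part you correctly flag as delicate. Conversely, your approach buys economy: it reuses the OPE coefficients already established in Propositions 3.3, 3.4 and 3.7 (including the function $A(x)=\Theta_{p}(q\gamma x)\Theta_{p}(q^{-1}\gamma^{-1}x)/\Theta_{p}(\gamma x)\Theta_{p}(\gamma^{-1}x)$, which the paper itself later exploits in Section 5) and never requires writing down the mode expansions of $\varphi^{\pm}$. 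Both proofs are sound; the paper's yields structural by-products (the pure annihilation/creation form of $\varphi^{\pm}$, used again in the remark that $[\varphi^{+}(p;z)]_{1}\neq[\varphi^{-}(p;z)]_{1}$), while yours stays entirely at the level of theta-function OPE data.
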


\begin{proof}[\textit{Proof}]
Relations (3.26) and (3.27) follow from (3.13), (3.16). By the definition of $\varphi^{\pm}(p;z)$, they take forms as follows :
\begin{align}
&\quad \varphi^{+}(p;z)\notag\\
&=\exp\bigg(-\sum_{n>0}\frac{1-t^{-n}}{1-p^{n}}p^{n}(1-\gamma^{-2n})\gamma^{n/2}b_{n}\frac{z^{n}}{n}\bigg)\exp\bigg(-\sum_{n>0}\frac{1-t^{n}}{1-p^{n}}(1-\gamma^{2n})\gamma^{-n/2}a_{n}\frac{z^{-n}}{n}\bigg),\\
&\quad \varphi^{-}(p;z)\notag\\
&=\exp\bigg(-\sum_{n<0}\frac{1-t^{-n}}{1-p^{-n}}p^{-n}(1-\gamma^{2n})\gamma^{-n/2}b_{n}\frac{z^{n}}{n}\bigg)\exp\bigg(-\sum_{n<0}\frac{1-t^{n}}{1-p^{-n}}(1-\gamma^{-2n})\gamma^{n/2}a_{n}\frac{z^{-n}}{n}\bigg).
\end{align}
By these expressions, the relation $[\varphi^{\pm}(p;z),\varphi^{\pm}(p;w)]=0$ is trivial. Next we show the relation (3.23). Here we can check that $g_{p}(x)$ takes the form as follows :
\begin{align}
&\quad g_{p}(x) \notag\\
&=\exp\bigg(-\sum_{n>0}\frac{(1-q^{n})(1-t^{-n})(1-\gamma^{2n})}{1-p^{n}}p^{n}\frac{x^{-n}}{n}\bigg)\exp\bigg(\sum_{n>0}\frac{(1-q^{n})(1-t^{-n})(1-\gamma^{2n})}{1-p^{n}}\frac{x^{n}}{n}\bigg).
\end{align}
We can also check $g_{p}(x^{-1})=g_{p}(x)^{-1}$. From these facts, we have the following :
\begin{align*}
&\quad \varphi^{+}(p;z)\varphi^{-}(p;w) \\
&=\exp\bigg(\sum_{m>0}\frac{1-t^{-m}}{1-p^{m}}p^{m}(1-\gamma^{-2m})\gamma^{m/2}\frac{1-t^{m}}{1-p^{m}}p^{m}(1-\gamma^{-2m})\gamma^{m/2}m\frac{1-p^{m}}{(qt^{-1}p)^{m}}\frac{1-q^{m}}{1-t^{m}}\frac{(z/w)^{m}}{m(-m)}\bigg) \\
&\quad\times\exp\bigg(\sum_{m>0}\frac{1-t^{m}}{1-p^{m}}(1-\gamma^{2m})\gamma^{-m/2}\frac{1-t^{-m}}{1-p^{m}}(1-\gamma^{2m})\gamma^{-m/2}m(1-p^{m})\frac{1-q^{m}}{1-t^{m}}\frac{(w/z)^{m}}{m(-m)}\bigg)\\
&\hskip 13cm \times\varphi^{-}(p;w)\varphi^{+}(p;z) \\
&=\exp\bigg(-\sum_{m>0}\frac{(1-q^{m})(1-t^{-m})(1-\gamma^{2m})}{1-p^{m}}p^{m}(\gamma^{-m}-\gamma^{m})\frac{(z/w)^{m}}{m}\bigg)\\
&\quad \times\exp\bigg(\sum_{m>0}\frac{(1-q^{m})(1-t^{-m})(1-\gamma^{2m})}{1-p^{m}}(\gamma^{m}-\gamma^{-m})\frac{(w/z)^{m}}{m}\bigg)\varphi^{-}(p;w)\varphi^{+}(p;z) \\
&=\frac{g_{p}(\gamma w/z)}{g_{p}(\gamma^{-1}w/z)}\varphi^{-}(p;w)\varphi^{+}(p;z)=\frac{g_{p}(\gamma z/w)}{g_{p}(\gamma^{-1}z/w)}\varphi^{-}(p;w)\varphi^{+}(p;z) \quad( \because \, g_{p}(x^{-1})=g_{p}(x)^{-1}).
\end{align*}

Next we show the relations (3.24). By Wick's theorem, we have
\begin{align*}
&\quad \varphi^{+}(p;z)\eta(p;z) \\
&=\exp\bigg(\sum_{m>0}\frac{1-t^{-m}}{1-p^{m}}p^{m}(1-\gamma^{-2m})\gamma^{m/2}\frac{1-t^{m}}{1-p^{m}}p^{m}\cdot m\frac{1-p^{m}}{(qt^{-1}p)^{m}}\frac{1-q^{m}}{1-t^{m}}\frac{(z/w)^{m}}{m(-m)}\bigg) \\
&\quad\times\exp\bigg(\sum_{m>0}\frac{1-t^{m}}{1-p^{m}}(1-\gamma^{2m})\gamma^{-m/2}\frac{1-t^{-m}}{1-p^{m}}\cdot m(1-p^{m})\frac{1-q^{m}}{1-t^{m}}\frac{(w/z)^{m}}{m(-m)}\bigg)\eta(p;w)\varphi^{+}(p;z) \\
&=\exp\bigg(\sum_{m>0}\frac{(1-q^{m})(1-t^{-m})(1-\gamma^{2m})}{1-p^{m}}p^{m}\gamma^{m/2}\frac{(z/w)^{m}}{m}\bigg) \\
&\quad \times\exp\bigg(-\sum_{m>0}\frac{(1-q^{m})(1-t^{-m})(1-\gamma^{2m})}{1-p^{m}}\gamma^{-m/2}\frac{(w/z)^{m}}{m}\bigg)\eta(p;w)\varphi^{+}(p;z) \\
&=g_{p}\Big(\gamma^{-1/2}\frac{w}{z}\Big)^{-1}\eta(p;w)\varphi^{+}(p;z)=g_{p}\Big(\gamma^{1/2}\frac{z}{w}\Big)\eta(p;w)\varphi^{+}(p;z).
\end{align*}
Similarly, we have
\begin{align*}
&\quad \eta(p;w)\varphi^{-}(p;z) \\
&=\exp\bigg(\sum_{m>0}\frac{1-t^{-m}}{1-p^{m}}p^{m}\frac{1-t^{m}}{1-p^{m}}p^{m}(1-\gamma^{-2m})\gamma^{m/2}\cdot m\frac{1-p^{m}}{(qt^{-1}p)^{m}}\frac{1-q^{m}}{1-t^{m}}\frac{(w/z)^{m}}{m(-m)}\bigg) \\
&\quad\times\exp\bigg(\sum_{m>0}\frac{1-t^{m}}{1-p^{m}}\frac{1-t^{-m}}{1-p^{m}}(1-\gamma^{2m})\gamma^{-m/2}\cdot m(1-p^{m})\frac{1-q^{m}}{1-t^{m}}\frac{(z/w)^{m}}{m(-m)}\bigg)\varphi^{-}(p;z) \eta(p;w)\\
&=\exp\bigg(\sum_{m>0}\frac{(1-q^{m})(1-t^{-m})(1-\gamma^{2m})}{1-p^{m}}p^{m}\gamma^{m/2}\frac{(w/z)^{m}}{m}\bigg) \\
&\quad \times\exp\bigg(-\sum_{m>0}\frac{(1-q^{m})(1-t^{-m})(1-\gamma^{2m})}{1-p^{m}}\gamma^{-m/2}\frac{(z/w)^{m}}{m}\bigg)\varphi^{-}(p;z)\eta(p;w) \\
&=g_{p}\Big(\gamma^{-1/2}\frac{z}{w}\Big)^{-1}\varphi^{-}(p;z)\eta(p;w).
\end{align*}
Consequently we have $\varphi^{\pm}(p;z)\eta(p;w)=g_{p}\Big(\gamma^{\pm\frac{1}{2}}\displaystyle \frac{z}{w}\Big)\eta(p;w)\varphi^{\pm}(p;z)$.

Finally we show (3.25). Similar to the above calculations, we have the following :
\begin{align*}
&\quad \varphi^{+}(p;z)\xi(p;z) \\
&=\exp\bigg(-\sum_{m>0}\frac{1-t^{-m}}{1-p^{m}}p^{m}(1-\gamma^{-2m})\gamma^{m/2}\frac{1-t^{m}}{1-p^{m}}\gamma^{-m}p^{m}\cdot m\frac{1-p^{m}}{(qt^{-1}p)^{m}}\frac{1-q^{m}}{1-t^{m}}\frac{(z/w)^{m}}{m(-m)}\bigg) \\
&\quad\times\exp\bigg(-\sum_{m>0}\frac{1-t^{m}}{1-p^{m}}(1-\gamma^{2m})\gamma^{-m/2}\frac{1-t^{-m}}{1-p^{m}}\gamma^{m}\cdot m(1-p^{m})\frac{1-q^{m}}{1-t^{m}}\frac{(w/z)^{m}}{m(-m)}\bigg)\\
&\hskip 13cm \times\xi(p;w)\varphi^{+}(p;z) \\
&=\exp\bigg(-\sum_{m>0}\frac{(1-q^{m})(1-t^{-m})(1-\gamma^{2m})}{1-p^{m}}p^{m}\gamma^{-m/2}\frac{(z/w)^{m}}{m}\bigg) \\
&\quad \times\exp\bigg(\sum_{m>0}\frac{(1-q^{m})(1-t^{-m})(1-\gamma^{2m})}{1-p^{m}}\gamma^{m/2}\frac{(w/z)^{m}}{m}\bigg)\xi(p;w)\varphi^{+}(p;z) \\
&=g_{p}\Big(\gamma^{1/2}\frac{w}{z}\Big)\xi(p;w)\varphi^{+}(p;z)=g_{p}\Big(\gamma^{-1/2}\frac{z}{w}\Big)^{-1}\xi(p;w)\varphi^{+}(p;z),
\end{align*}
\begin{align*}
&\quad \xi(p;w)\varphi^{-}(p;z) \\
&=\exp\bigg(-\sum_{m>0}\frac{1-t^{-m}}{1-p^{m}}\gamma^{-m}p^{m}\frac{1-t^{m}}{1-p^{m}}p^{m}(1-\gamma^{-2m})\gamma^{m/2}\cdot m\frac{1-p^{m}}{(qt^{-1}p)^{m}}\frac{1-q^{m}}{1-t^{m}}\frac{(w/z)^{m}}{m(-m)}\bigg) \\
&\quad\times\exp\bigg(-\sum_{m>0}\frac{1-t^{m}}{1-p^{m}}\gamma^{m}\frac{1-t^{-m}}{1-p^{m}}(1-\gamma^{2m})\gamma^{-m/2}\cdot m(1-p^{m})\frac{1-q^{m}}{1-t^{m}}\frac{(z/w)^{m}}{m(-m)}\bigg)\\
&\hskip 13cm \times\varphi^{-}(p;z)\xi(p;w) \\
&=\exp\bigg(-\sum_{m>0}\frac{(1-q^{m})(1-t^{-m})(1-\gamma^{2m})}{1-p^{m}}p^{m}\gamma^{-m/2}\frac{(w/z)^{m}}{m}\bigg) \\
&\quad \times\exp\bigg(\sum_{m>0}\frac{(1-q^{m})(1-t^{-m})(1-\gamma^{2m})}{1-p^{m}}\gamma^{m/2}\frac{(z/w)^{m}}{m}\bigg)\varphi^{-}(p;z)\xi(p;w) \\
&=g_{p}\Big(\gamma^{1/2}\frac{z}{w}\Big)\varphi^{-}(p;z)\xi(p;w).
\end{align*}
Therefore we have $\varphi^{\pm}(p;z)\xi(p;w)=g_{p}\Big(\gamma^{\mp\frac{1}{2}}\displaystyle \frac{z}{w}\Big)^{-1}\xi(p;w)\varphi^{\pm}(p;z)$, here the proof of the theorem is complete. \quad $\Box$
\end{proof}

\subsection{Elliptic Ding-Iohara algebra $\mathcal{U}(q,t,p)$}
After the theorem 3.8, we can define the elliptic Ding-Iohara algebra.

\begin{definition}[\textbf{Elliptic Ding-Iohara algebra $\mathcal{U}(q,t,p)$}] 
Let $g_{p}(x)$ be the structure function defined by (3.22) :
\begin{align*}
g_{p}(x)=\frac{\Theta_{p}(qx)\Theta_{p}(t^{-1}x)\Theta_{p}(q^{-1}tx)}{\Theta_{p}(q^{-1}x)\Theta_{p}(tx)\Theta_{p}(qt^{-1}x)}.
\end{align*}
Let $\gamma$ be the central, invertible element and currents $x^{\pm}(p;z):=\sum_{n\in\mathbf{Z}}x^{\pm}_{n}(p)z^{-n}$, $\psi^{\pm}(p;z):=\sum_{n\in\mathbf{Z}}\psi^{\pm}_{n}(p)z^{-n}$ be operators subject to the defining relations listed below.
\begin{align}
&\hskip 1cm [\psi^{\pm}(p;z), \psi^{\pm}(p;w)]=0, \quad \psi^{+}(p;z)\psi^{-}(p;w)=\frac{g_{p}(\gamma z/w)}{g_{p}(\gamma^{-1}z/w)}\psi^{-}(p;w)\psi^{+}(p;z),\notag\\
&\hskip 3cm \psi^{\pm}(p;z)x^{+}(p;w)=g_{p}\Big(\gamma^{\pm\frac{1}{2}}\frac{z}{w}\Big)x^{+}(p;w)\psi^{\pm}(p;z),\notag\\
&\hskip 3cm \psi^{\pm}(p;z)x^{-}(p;w)=g_{p}\Big(\gamma^{\mp\frac{1}{2}}\frac{z}{w}\Big)^{-1}x^{-}(p;w)\psi^{\pm}(p;z),\notag\\
&\hskip 3cm x^{\pm}(p;z)x^{\pm}(p;w)=g_{p}\Big(\frac{z}{w}\Big)^{\pm 1}x^{\pm}(p;w)x^{\pm}(p;z),\notag\\
&[x^{+}(p;z),x^{-}(p;w)]
=\frac{\Theta_{p}(q)\Theta_{p}(t^{-1})}{(p;p)_{\infty}^{3}\Theta_{p}(qt^{-1})}\bigg\{\delta\Big(\gamma\frac{w}{z}\Big)\psi^{+}(p;\gamma^{1/2}w)-\delta\Big(\gamma^{-1}\frac{w}{z}\Big)\psi^{-}(p;\gamma^{-1/2}w)\bigg\}.
\end{align}
Then we define the elliptic Ding-Iohara algebra $\mathcal{U}(q,t,p)$ to be the associative $\mathbf{C}$-algebra generated by $\{x^{\pm}_{n}(p)\}_{n\in{\mathbf{Z}}}$, $\{\psi^{\pm}_{n}(p)\}_{n\in{\mathbf{Z}}}$ and $\gamma$.
\end{definition}

Similar to the trigonometric case, the map defined by
\begin{align*}
\gamma \mapsto (qt^{-1})^{-1/2}, \quad x^{+}(p;z) \mapsto \eta(p;z), \quad x^{-}(p;z) \mapsto \xi(p;z), \quad \psi^{\pm}(p;z) \mapsto \varphi^{\pm}(p;z)
\end{align*}
gives a representation, or the free field realization, of the elliptic Ding-Iohara algebra $\mathcal{U}(q,t,p)$ (Theorem 1.2 in section 1).

\begin{remark}
(1) By the definition, the trigonometric limit $p \to 0$ of the elliptic Ding-Iohara algebra $\mathcal{U}(q,t,p)$ degenerates to the Ding-Iohara algebra $\mathcal{U}(q,t)$.

(2) Since the relations (3.32) take the same forms of the trigonometric case (2.18), we can define the coproduct $\Delta : \mathcal{U}(q,t,p) \to \mathcal{U}(q,t,p)\otimes\mathcal{U}(q,t,p)$ similar to the trigonometric case :
\begin{align}
&\Delta(\gamma^{\pm 1})=\gamma^{\pm 1}\otimes\gamma^{\pm 1}, \quad \Delta(\psi^{\pm}(p;z))=\psi^{\pm}(p;\gamma^{\pm1/2}_{(2)}z)\otimes\psi^{\pm}(p;\gamma^{\mp1/2}_{(1)}z), \notag\\
&\Delta(x^{+}(p;z))=x^{+}(p;z)\otimes 1+\psi^{-}(p;\gamma^{1/2}_{(1)}z)\otimes x^{+}(p;\gamma_{(1)}z), \notag\\
&\Delta(x^{-}(p;z))=x^{-}(p;\gamma_{(2)}z)\otimes \psi^{+}(p;\gamma^{1/2}_{(2)}z)+1\otimes x^{-}(p;z).
\end{align}

(3) In [17], another elliptic Ding-Iohara algebra is defined based on the idea of the quasi-Hopf deformation. Then the same structure function defined by (3.22) arises.
\end{remark}

\section{Free field realization of the elliptic Macdonald operator}
In this section, we study the relations between the elliptic currents $\eta(p;z)$, $\xi(p;z)$ and the elliptic Macdonald operators $H_{N}(q,t,p)$, $H_{N}(q^{-1},t^{-1},p)$.

\subsection{Preparations}
The elliptic Macdonald operator $H_{N}(q,t,p)$ $(N\in{\mathbf{Z}_{>0}})$ is defined as follows.
\begin{align*}
H_{N}(q,t,p):=\sum_{i=1}^{N}\prod_{j\neq{i}}\frac{\Theta_{p}(tx_{i}/x_{j})}{\Theta_{p}(x_{i}/x_{j})}T_{q,x_{i}}.
\end{align*}
First, we need a lemma to calculate the constant term of a product of the theta functions.

\begin{lemma}
(1) We have a partial fraction expansion formula of the product of the theta functions as follows.
\begin{align}
\prod_{i=1}^{N}\frac{\Theta_{p}(t^{-1}x_{i}z)}{\Theta_{p}(x_{i}z)}=\frac{\Theta_{p}(t)}{\Theta_{p}(t^{N})}\sum_{i=1}^{N}\frac{\Theta_{p}(t^{-N}x_{i}z)}{\Theta_{p}(x_{i}z)}\prod_{j\neq{i}}\frac{\Theta_{p}(tx_{i}/x_{j})}{\Theta_{p}(x_{i}/x_{j})}.
\end{align}
(2) From Ramanujan's summation formula :
\begin{align}
\sum_{n\in{\mathbf{Z}}}\frac{(a;p)_{n}}{(b;p)_{n}}z^{n}=\frac{(az;p)_{\infty}(p/az;p)_{\infty}(b/a;p)_{\infty}(p;p)_{\infty}}{(z;p)_{\infty}(b/az;p)_{\infty}(p/a;p)_{\infty}(b;p)_{\infty}} \quad (|a^{-1}b|<|z|<1),
\end{align}
\textit{we have an expansion of $\Theta_{p}(az)/\Theta_{p}(z)$ as follows :}
\begin{align}
\frac{\Theta_{p}(az)}{\Theta_{p}(z)}=\frac{\Theta_{p}(a)}{(p;p)_{\infty}^{3}}\sum_{n\in{\mathbf{Z}}}\frac{z^{n}}{1-ap^{n}} \quad (|p|<|z|<1).
\end{align}
\end{lemma}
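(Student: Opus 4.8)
The plan is to treat the two parts separately: part (2) is a direct specialization of Ramanujan's formula (4.2), whereas part (1) is an identity of elliptic functions that I would establish by a Liouville-type uniqueness argument.

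For (2), I would set $b=pa$ in (4.2). Then $a^{-1}b=p$, so the convergence range $|a^{-1}b|<|z|<1$ becomes exactly $|p|<|z|<1$, as required. Writing $(a;p)_{n}=(a;p)_{\infty}/(ap^{n};p)_{\infty}$ and $(a;p)_{\infty}=(1-a)(pa;p)_{\infty}$ gives $(a;p)_{n}/(pa;p)_{n}=(1-a)/(1-ap^{n})$, so the left side of (4.2) equals $(1-a)\sum_{n\in\mathbf{Z}}z^{n}/(1-ap^{n})$. On the right side the choice $b=pa$ leaves the factors $(az;p)_{\infty}(p/az;p)_{\infty}$, $(z;p)_{\infty}(p/z;p)_{\infty}$, and $(pa;p)_{\infty}(p/a;p)_{\infty}$, which I would convert using $\Theta_{p}(x)=(p;p)_{\infty}(x;p)_{\infty}(px^{-1};p)_{\infty}$ into $\Theta_{p}(az)/(p;p)_{\infty}$, $\Theta_{p}(z)/(p;p)_{\infty}$, and $\Theta_{p}(a)/\{(p;p)_{\infty}(1-a)\}$ respectively. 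Collecting the powers of $(p;p)_{\infty}$ and cancelling the common factor $1-a$ yields (4.4). This part is essentially mechanical.

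For (1), I would regard both sides as meromorphic functions of $z$ on $\mathbf{C}^{\times}$. The first step is quasi-periodicity: from $\Theta_{p}(px)=-x^{-1}\Theta_{p}(x)$ one checks that $\Theta_{p}(t^{-1}x_{i}z)/\Theta_{p}(x_{i}z)$ is multiplied by $t$ under $z\mapsto pz$, while $\Theta_{p}(t^{-N}x_{i}z)/\Theta_{p}(x_{i}z)$ is multiplied by $t^{N}$; hence the left side and every summand on the right both scale by $t^{N}$, so their difference $G(z)$ satisfies $G(pz)=t^{N}G(z)$. Since $\Theta_{p}$ is holomorphic on $\mathbf{C}^{\times}$ with simple zeros precisely on $p^{\mathbf{Z}}$, for generic $x_{i}$ both sides have only simple poles, at $z\in p^{\mathbf{Z}}/x_{i}$ for $1\le i\le N$.

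The second step is to match residues at $z=1/x_{i}$: on the left only the $i$-th factor is singular and on the right only the $i$-th summand is singular, so the two residues are $\frac{\Theta_{p}(t^{-1})}{x_{i}\Theta'_{p}(1)}\prod_{j\neq i}\frac{\Theta_{p}(t^{-1}x_{j}/x_{i})}{\Theta_{p}(x_{j}/x_{i})}$ and $\frac{\Theta_{p}(t)\Theta_{p}(t^{-N})}{\Theta_{p}(t^{N})}\frac{1}{x_{i}\Theta'_{p}(1)}\prod_{j\neq i}\frac{\Theta_{p}(tx_{i}/x_{j})}{\Theta_{p}(x_{i}/x_{j})}$. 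Applying $\Theta_{p}(x)=-x\Theta_{p}(x^{-1})$ turns $\Theta_{p}(t^{-1})$ into $-t^{-1}\Theta_{p}(t)$, $\Theta_{p}(t^{-N})$ into $-t^{-N}\Theta_{p}(t^{N})$, and each ratio $\Theta_{p}(t^{-1}x_{j}/x_{i})/\Theta_{p}(x_{j}/x_{i})$ into $t^{-1}\Theta_{p}(tx_{i}/x_{j})/\Theta_{p}(x_{i}/x_{j})$, after which both residues collapse to $-t^{-N}\Theta_{p}(t)\prod_{j\neq i}\frac{\Theta_{p}(tx_{i}/x_{j})}{\Theta_{p}(x_{i}/x_{j})}$ divided by $x_{i}\Theta'_{p}(1)$. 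With residues matched at each $z=1/x_{i}$, quasi-periodicity propagates the cancellation to all poles $z\in p^{\mathbf{Z}}/x_{i}$, so $G$ is holomorphic on $\mathbf{C}^{\times}$; expanding $G(z)=\sum_{n\in\mathbf{Z}}g_{n}z^{n}$, the relation $G(pz)=t^{N}G(z)$ forces $g_{n}(p^{n}-t^{N})=0$ for every $n$, and for generic $t$ no $p^{n}$ equals $t^{N}$, whence $G\equiv 0$ and the identity holds for all $t$ by continuity. The main obstacle is exactly this residue bookkeeping: one must carry the prefactor $\Theta_{p}(t)/\Theta_{p}(t^{N})$ together with $\Theta_{p}(t^{\mp N})$ and reduce the two theta-ratio products against one another, and the identity survives only because the reflection formula produces precisely the power $t^{-N}$ that cancels the $\Theta_{p}(t^{N})$ normalization.
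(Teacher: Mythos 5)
Your proof of part (2) is essentially the paper's own: both set $b=pa$ in Ramanujan's formula (4.2), reduce the left-hand side to $(1-a)\sum_{n\in\mathbf{Z}}z^{n}/(1-ap^{n})$, and assemble the right-hand side into theta functions via $\Theta_{p}(x)=(p;p)_{\infty}(x;p)_{\infty}(px^{-1};p)_{\infty}$, cancelling the common factor $1-a$. For part (1), however, you take a genuinely different route, and it is correct. The paper obtains (4.1) as a specialization ($t_{j}=t^{-1}$, $x_{j}\to x_{j}^{-1}$, plus the reflection $\Theta_{p}(x)=-x\Theta_{p}(x^{-1})$) of the general partial-fraction identity (6.19) in Appendix B, which is itself proved in additive variables by induction on $N$, the induction step resting on the Riemann relation for $[u]=-z^{-1/2}\Theta_{p}(z)$; only that Riemann relation is proved by a Liouville argument. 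You instead run the Liouville-type argument directly on the full $N$-variable identity: the difference $G(z)$ of the two sides satisfies $G(pz)=t^{N}G(z)$, has at most simple poles located at $z\in p^{\mathbf{Z}}/x_{i}$ for generic parameters, and your residue bookkeeping at $z=1/x_{i}$ is right --- the reflection formula turns $\Theta_{p}(t^{-1})$ into $-t^{-1}\Theta_{p}(t)$, $\Theta_{p}(t^{-N})$ into $-t^{-N}\Theta_{p}(t^{N})$, and each factor $\Theta_{p}(t^{-1}x_{j}/x_{i})/\Theta_{p}(x_{j}/x_{i})$ into $t^{-1}\Theta_{p}(tx_{i}/x_{j})/\Theta_{p}(x_{i}/x_{j})$, so both residues collapse to $-t^{-N}\Theta_{p}(t)\prod_{j\neq i}\Theta_{p}(tx_{i}/x_{j})/\Theta_{p}(x_{i}/x_{j})$ over $x_{i}\Theta_{p}'(1)$ --- after which quasi-periodicity forces every Laurent coefficient of the now-holomorphic $G$ to satisfy $g_{n}(p^{n}-t^{N})=0$, giving $G\equiv 0$ for generic $t$ and then for all $t$ by continuity. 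What your approach buys is economy and transparency: a single self-contained argument replaces the induction and the appendix machinery, and it shows exactly why the normalization $\Theta_{p}(t)/\Theta_{p}(t^{N})$ is forced (it is what matches residues). What the paper's approach buys is the stronger statement (6.19) with independent parameters $t_{1},\dots,t_{N}$, which is not idle generality: Remark 4.2 uses (6.19) with unequal $t_{i}$'s to derive the identity (4.4), so the paper needs the general formula anyway, whereas your argument as written only delivers the equal-parameter case (though it would extend to distinct $t_{i}$'s with the same residue computation).
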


\begin{proof}[\textit{Proof}]
(1) In the partial fraction expansion (6.19) in Appendix B as
\begin{align*}
\prod_{i=1}^{N}\frac{\Theta_{p}(t_{i}x_{i}^{-1}z)}{\Theta_{p}(x_{i}^{-1}z)}=\sum_{i=1}^{N}\frac{\Theta_{p}(t_{i})}{\Theta_{p}(t_{(N)})}\frac{\Theta_{p}(t_{(N)}x_{i}^{-1}z)}{\Theta_{p}(x_{i}^{-1}z)}\prod_{j\neq{i}}\frac{\Theta_{p}(t_{j}x_{i}/x_{j})}{\Theta_{p}(x_{i}/x_{j})},
\end{align*}
by setting $t_{j}=t^{-1}$ and substitution $x_{j} \to x_{j}^{-1}$, we obtain (4.1).

(2) In Ramanujan's summation formula (4.2) (the proof of this formula is in Appendix B), by setting $b/a=p$ we have
\begin{align*}
&\text{(Left hand side of (4.2))}=\sum_{n\in{\mathbf{Z}}}\frac{(a;p)_{n}}{(ap;p)_{n}}z^{n}=(1-a)\sum_{n\in{\mathbf{Z}}}\frac{z^{n}}{1-ap^{n}}, \\
&\text{(Right hand side of (4.2))}=\frac{(az;p)_{\infty}(p(az)^{-1};p)_{\infty}(p;p)_{\infty}^{2}}{(z;p)_{\infty}(pz;p)_{\infty}(pa^{-1};p)_{\infty}(pa;p)_{\infty}}=\frac{(1-a)(p;p)_{\infty}^{3}}{\Theta_{p}(a)}\frac{\Theta_{p}(az)}{\Theta_{p}(z)}. \quad \Box
\end{align*}
\end{proof}

\begin{remark}
Using the equation (6.19) and the relation between the theta function and the delta function, we have the following :
\begin{align*}
\prod_{i=1}^{N}\frac{\Theta_{p}(t_{i}x_{i}^{-1}z)}{\Theta_{p}(x_{i}^{-1}z)}
&=\sum_{i=1}^{N}\frac{\Theta_{p}(t_{i})}{(p;p)_{\infty}^{3}}\prod_{j\neq{i}}\frac{\Theta_{p}(t_{j}x_{i}/x_{j})}{\Theta_{p}(x_{i}/x_{j})}\delta(x_{i}^{-1}z)+t_{(N)}\prod_{i=1}^{N}\frac{\Theta_{p}(t_{i}^{-1}x_{i}/z)}{\Theta_{p}(x_{i}/z)} \\
&=\sum_{i=1}^{N}\frac{\Theta_{p}(t_{i})}{(p;p)_{\infty}^{3}}\prod_{j\neq{i}}\frac{\Theta_{p}(t_{j}x_{i}/x_{j})}{\Theta_{p}(x_{i}/x_{j})}\delta(x_{i}^{-1}z)+t_{(N)}\prod_{i=1}^{N}\frac{\Theta_{p}(pt_{i}x_{i}^{-1}z)}{\Theta_{p}(px_{i}^{-1}z)},
\end{align*}
where we use the relation $1/\Theta_{p}(px)=1/\Theta_{p}(x^{-1})$. Taking the constant term of the above relation in $z$, we have the identity as follows.
\begin{align}
\sum_{i=1}^{N}\Theta_{p}(t_{i})\prod_{j\neq{i}}\frac{\Theta_{p}(t_{j}x_{i}/x_{j})}{\Theta_{p}(x_{i}/x_{j})}=(1-t_{(N)})(p;p)_{\infty}^{3}\left[\prod_{i=1}^{N}\frac{\Theta_{p}(t_{i}x_{i}^{-1}z)}{\Theta_{p}(x_{i}^{-1}z)}\right]_{1}.
\end{align}
\end{remark}

\subsection{The case of using $\eta(p;z)$}

\begin{theorem}[\textbf{Free field realization of the elliptic Macdonald operator}] 
We use the notation $\phi_{N}(p;x):=\prod_{j=1}^{N}\phi(p;x_{j})$ $(N\in\mathbf{Z}_{>0})$. Then the elliptic current $\eta(p;z)$ reproduces the elliptic Macdonald operator $H_{N}(q,t,p)$ as follows :
\begin{align}
[\eta(p;z)-t^{-N}(\eta(p;z))_{-}(\eta(p;p^{-1}z))_{+}]_{1}\phi_{N}(p;x)|0 \rangle 
=\frac{t^{-N+1}\Theta_{p}(t^{-1})}{(p;p)_{\infty}^{3}}H_{N}(q,t,p)\phi_{N}(p;x)|0 \rangle.
\end{align}
\end{theorem}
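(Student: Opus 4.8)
The plan is to mirror the trigonometric computation of Proposition 2.7, replacing the rational prefactors by theta quotients and the elementary splitting $\frac{1}{1-x}+\frac{x^{-1}}{1-x^{-1}}=\delta(x)$ by the theta--delta relation of Lemma 3.5. The conceptual point is that the two summands $\eta(p;z)$ and $t^{-N}(\eta(p;z))_{-}(\eta(p;p^{-1}z))_{+}$, when acting on $\phi_{N}(p;x)|0\rangle$, will carry the \emph{same} meromorphic theta prefactor but expanded in two different annular regions; their difference is therefore a genuine sum of delta functions that localizes onto the $q$-shifts building $H_{N}(q,t,p)$.

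First I would record the relevant operator product expansion. A Wick computation in the style of Proposition 3.3 gives $\eta(p;z)\phi(p;w)=\frac{\Theta_{p}(w/z)}{\Theta_{p}(tw/z)}\bm{:}\eta(p;z)\phi(p;w)\bm{:}$, so that, since $(\eta(p;z))_{-}$ consists only of creation modes commuting with every $\phi(p;x_{j})$ while $(\eta(p;z))_{+}|0\rangle=|0\rangle$,
\begin{align*}
\eta(p;z)\phi_{N}(p;x)|0\rangle=\prod_{i=1}^{N}\frac{\Theta_{p}(x_{i}/z)}{\Theta_{p}(tx_{i}/z)}\,(\eta(p;z))_{-}\phi_{N}(p;x)|0\rangle.
\end{align*}
Replacing $z$ by $p^{-1}z$ in the annihilation part only and invoking $\Theta_{p}(px)=-x^{-1}\Theta_{p}(x)$ together with the identity $1/\Theta_{p}(px)=1/\Theta_{p}(x^{-1})$ of (3.19), the same computation yields
\begin{align*}
t^{-N}(\eta(p;z))_{-}(\eta(p;p^{-1}z))_{+}\phi_{N}(p;x)|0\rangle=t^{-N}\prod_{i=1}^{N}\frac{\Theta_{p}(px_{i}/z)}{\Theta_{p}(tpx_{i}/z)}\,(\eta(p;z))_{-}\phi_{N}(p;x)|0\rangle.
\end{align*}
As a theta quotient the second prefactor equals the first, but its series expansion sits in the opposite region precisely because each $1/\Theta_{p}(tpx_{i}/z)$ is $1/\Theta_{p}((tx_{i}/z)^{-1})$.

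Subtracting, the operator $(\eta(p;z))_{-}$ factors out and I am left with the difference of the two expansions of the single function $\prod_{i}\Theta_{p}(x_{i}/z)/\Theta_{p}(tx_{i}/z)$. To evaluate it I would apply the partial fraction formula of Lemma 4.1(1) (after the substitutions $x_{i}\mapsto tx_{i}$, $z\mapsto z^{-1}$) to write this product as $\frac{\Theta_{p}(t)}{\Theta_{p}(t^{N})}\sum_{i}\frac{\Theta_{p}(t^{-N+1}x_{i}/z)}{\Theta_{p}(tx_{i}/z)}\prod_{j\neq i}\frac{\Theta_{p}(tx_{i}/x_{j})}{\Theta_{p}(x_{i}/x_{j})}$, and apply the same identity to the $p$-shifted product. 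For a single term, setting $w=tx_{i}/z$ and using $\Theta_{p}(py)=-y^{-1}\Theta_{p}(y)$ on the numerator, the region-A piece minus the region-B piece becomes $\Theta_{p}(t^{-N}w)\bigl(\tfrac{1}{\Theta_{p}(w)}+\tfrac{w^{-1}}{\Theta_{p}(w^{-1})}\bigr)$, which by Lemma 3.5 equals $\Theta_{p}(t^{-N})(p;p)_{\infty}^{-3}\delta(tx_{i}/z)$. Collecting the scalar factors via $\Theta_{p}(t^{-N})=-t^{-N}\Theta_{p}(t^{N})$ and $\Theta_{p}(t)=-t\,\Theta_{p}(t^{-1})$ shows that the overall coefficient is exactly $\frac{t^{-N+1}\Theta_{p}(t^{-1})}{(p;p)_{\infty}^{3}}$, so the left-hand side becomes $\frac{t^{-N+1}\Theta_{p}(t^{-1})}{(p;p)_{\infty}^{3}}\sum_{i}\delta(tx_{i}/z)\prod_{j\neq i}\frac{\Theta_{p}(tx_{i}/x_{j})}{\Theta_{p}(x_{i}/x_{j})}(\eta(p;z))_{-}\phi_{N}(p;x)|0\rangle$. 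Finally, taking the constant term in $z$ and using $[\delta(tx_{i}/z)f(z)]_{1}=f(tx_{i})$, the delta function localizes $z=tx_{i}$ and replaces $(\eta(p;z))_{-}$ by $(\eta(p;tx_{i}))_{-}$; since creation modes commute, this may be moved next to $\phi(p;x_{i})$, where the relation $(\eta(p;tx_{i}))_{-}\phi(p;x_{i})=\phi(p;qx_{i})$ of Proposition 3.3 turns it into $T_{q,x_{i}}$, and summing over $i$ reconstructs $H_{N}(q,t,p)$.

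The main obstacle I expect is conceptual rather than computational: making precise, and justifying rigorously, that $\eta(p;z)$ and its corrected partner furnish two \emph{different} expansions of one and the same theta quotient, so that their difference is the delta-function term of Lemma 3.5 and not an identically vanishing expression. The $p^{-1}$-shift in the annihilation part, read through $1/\Theta_{p}(px)=1/\Theta_{p}(x^{-1})$, is exactly the device implementing this change of region, and the careful bookkeeping of the reflection formulas $\Theta_{p}(x)=-x\Theta_{p}(x^{-1})$ and $\Theta_{p}(px)=-x^{-1}\Theta_{p}(x)$ in the prefactors is where the proof demands the most attention.
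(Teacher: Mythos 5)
Your proposal is correct and follows essentially the same route as the paper's proof: the OPE $\eta(p;z)\phi(p;w)=\frac{\Theta_{p}(w/z)}{\Theta_{p}(tw/z)}\bm{:}\eta(p;z)\phi(p;w)\bm{:}$, the partial fraction formula of Lemma 4.1(1), the theta--delta relation of Lemma 3.5, the reflection identity $1/\Theta_{p}(px)=1/\Theta_{p}(x^{-1})$ to identify the $t^{-N}(\eta(p;z))_{-}(\eta(p;p^{-1}z))_{+}$ term, and the localization $(\eta(p;tx_{i}))_{-}\phi(p;x_{i})=\phi(p;qx_{i})$ giving $T_{q,x_{i}}$. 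The only difference is organizational (you subtract the two operator actions first and then expand, whereas the paper expands $\prod_{i}\Theta_{p}(x_{i}/z)/\Theta_{p}(tx_{i}/z)$ first and then recognizes the leftover term as the correction operator), and in doing so you actually spell out the scalar identity that the paper merely asserts ``can be checked'' from (3.17) and (4.1).
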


\begin{proof}[\textit{Proof}]
First, we show the relation 
\begin{align}
\eta(p;z)\phi(p;w)=\frac{\Theta_{p}(w/z)}{\Theta_{p}(tw/z)}\bm{:}\eta(p;z)\phi(p;w)\bm{:}.
\end{align}
This is shown by Wick's theorem :
\begin{align*}
&\quad \eta(p;z)\phi(p;w) \\
&=\exp\bigg(-\sum_{m>0}\frac{1-t^{-m}}{1-p^{m}}p^{m}\frac{(1-t^{m})(qt^{-1}p)^{m}}{(1-q^{m})(1-p^{m})}\cdot m\frac{1-p^{m}}{(qt^{-1}p)^{m}}\frac{1-q^{m}}{1-t^{m}}\frac{(z/w)^{m}}{m\cdot m}\bigg) \\
&\times\exp\bigg(-\sum_{m>0}\frac{1-t^{m}}{1-p^{m}}\frac{1-t^{m}}{(1-q^{m})(1-p^{m})}\cdot m(1-p^{m})\frac{1-q^{m}}{1-t^{m}}\frac{(w/z)^{m}}{m\cdot m}\bigg)\bm{:}\eta(p;z)\phi(p;w)\bm{:} \\
&=\exp\bigg(-\sum_{m>0}\frac{1-t^{-m}}{1-p^{m}}p^{m}\frac{(z/w)^{m}}{m}\bigg)\exp\bigg(-\sum_{m>0}\frac{1-t^{m}}{1-p^{m}}\frac{(w/z)^{m}}{m}\bigg)\bm{:}\eta(p;z)\phi(p;w)\bm{:} \\
&=\frac{(pz/w;p)_{\infty}}{(t^{-1}pz/w;p)_{\infty}}\frac{(w/z;p)_{\infty}}{(tw/z;p)_{\infty}}\bm{:}\eta(p;z)\phi(p;w)\bm{:} \\
&=\frac{\Theta_{p}(w/z)}{\Theta_{p}(tw/z)}\bm{:}\eta(p;z)\phi(p;w)\bm{:}.
\end{align*}
By this relation, we have
\begin{align}
\eta(p;z)\phi_{N}(p;x)=\prod_{i=1}^{N}\frac{\Theta_{p}(x_{i}/z)}{\Theta_{p}(tx_{i}/z)}\bm{:}\eta(p;z)\phi_{N}(p;x)\bm{:}.
\end{align}
Using (3.17) and (4.1), we can check the following relation
\begin{align*}
\prod_{i=1}^{N}\frac{\Theta_{p}(x_{i}/z)}{\Theta_{p}(tx_{i}/z)}
=\frac{t^{-N+1}\Theta_{p}(t^{-1})}{(p;p)_{\infty}^{3}}\sum_{i=1}^{N}\prod_{j\neq{i}}\frac{\Theta_{p}(tx_{i}/x_{j})}{\Theta_{p}(x_{i}/x_{j})}\delta\Big(t\frac{x_{i}}{z}\Big)+t^{-N}\prod_{i=1}^{N}\frac{\Theta_{p}(z/x_{i})}{\Theta_{p}(t^{-1}z/x_{i})}.
\end{align*}
By these relations we have the following :
\begin{align}
&\quad \eta(p;z)\phi_{N}(p;x)|0 \rangle 
=\prod_{i=1}^{N}\frac{\Theta_{p}(x_{i}/z)}{\Theta_{p}(tx_{i}/z)}(\eta(p;z))_{-}\phi_{N}(p;x)|0 \rangle \notag\\
&=\frac{t^{-N+1}\Theta_{p}(t^{-1})}{(p;p)_{\infty}^{3}}\sum_{i=1}^{N}\prod_{j\neq{i}}\frac{\Theta_{p}(tx_{i}/x_{j})}{\Theta_{p}(x_{i}/x_{j})}\delta\Big(t\frac{x_{i}}{z}\Big)(\eta(p;tx_{i}))_{-}\phi_{N}(p;x)|0 \rangle \notag\\
&\hskip 5cm +t^{-N}\prod_{i=1}^{N}\frac{\Theta_{p}(z/x_{i})}{\Theta_{p}(t^{-1}z/x_{i})}(\eta(p;z))_{-}\phi_{N}(p;x)|0 \rangle \notag\\
&=\frac{t^{-N+1}\Theta_{p}(t^{-1})}{(p;p)_{\infty}^{3}}\sum_{i=1}^{N}\prod_{j\neq{i}}\frac{\Theta_{p}(tx_{i}/x_{j})}{\Theta_{p}(x_{i}/x_{j})}\delta\Big(t\frac{x_{i}}{z}\Big)T_{q,x_{i}}\phi_{N}(p;x)|0 \rangle \notag\\
&\hskip 5cm +t^{-N}\prod_{i=1}^{N}\frac{\Theta_{p}(z/x_{i})}{\Theta_{p}(t^{-1}z/x_{i})}(\eta(p;z))_{-}\phi_{N}(p;x)|0 \rangle,
\end{align}
where we use the relation $(\eta(p;tz))_{-}\phi(p;z)=\phi(p;qz)=T_{q,z}\phi(p;z)$. Let us recall the relation $1/\Theta_{p}(px)=1/\Theta_{p}(x^{-1})$. This leads that
\begin{align*}
\prod_{i=1}^{N}\frac{\Theta_{p}(z/x_{i})}{\Theta_{p}(t^{-1}z/x_{i})}=\prod_{i=1}^{N}\frac{\Theta_{p}(px_{i}/z)}{\Theta_{p}(ptx_{i}/z)}.
\end{align*}
Hence we have the following.
\begin{align}
\prod_{i=1}^{N}\frac{\Theta_{p}(z/x_{i})}{\Theta_{p}(t^{-1}z/x_{i})}(\eta(p;z))_{-}\phi_{N}(p;x)|0 \rangle 
=(\eta(p;z))_{-}(\eta(p;p^{-1}z))_{+}\phi_{N}(p;x)|0 \rangle.
\end{align}
Finally we have
\begin{align*}
[\eta(p;z)-t^{-N}(\eta(p;z))_{-}(\eta(p;p^{-1}z))_{+}]_{1}\phi_{N}(p;x)|0 \rangle 
=\frac{t^{-N+1}\Theta_{p}(t^{-1})}{(p;p)_{\infty}^{3}}H_{N}(q,t,p)\phi_{N}(p;x)|0 \rangle. \quad \Box
\end{align*}
\end{proof}

\begin{remark}
Let us define $C_{N}(p;x,y)$ by
\begin{align*}
C_{N}(p;x,y):=\langle 0|\phi^{\ast}_{N}(p;x)[(\eta(p;z))_{-}(\eta(p;p^{-1}z))_{+}]_{1}\phi_{N}(p;x)|0 \rangle \Big/\Pi(q,t,p)(x,y). 
\end{align*}
By Wick's theorem, we have
\begin{align}
C_{N}(p;x,y)=\left[\prod_{i=1}^{N}\frac{\Theta_{p}(t^{-1}x_{i}z)\Theta_{p}(z/y_{i})}{\Theta_{p}(x_{i}z)\Theta_{p}(t^{-1}z/y_{i})}\right]_{1}.
\end{align}
By the relation (4.3) which is obtained from Ramanujan's summation formula, the explicit form of $C_{N}(p;x,y)$ takes the following :
\begin{align}
C_{N}(p;x,y)=\bigg(\frac{t^{-N+1}\Theta_{p}(t^{-1})}{(p;p)_{\infty}^{3}}\bigg)^{2}\sum_{\genfrac{}{}{0pt}{1}{1\leq{i}\leq{N}}{1\leq{k}\leq{N}}}\prod_{j\neq{i}}\frac{\Theta_{p}(tx_{i}/x_{j})}{\Theta_{p}(x_{i}/x_{j})}\prod_{\ell\neq{k}}\frac{\Theta_{p}(ty_{k}/y_{\ell})}{\Theta_{p}(y_{k}/y_{\ell})}\sum_{m\in\mathbf{Z}}\frac{(tpx_{i}y_{k})^{m}}{(1-t^{-N}p^{m})^{2}}.
\end{align}
In the trigonometric limit, $C_{N}(p;x,y)$ degenerates to $1$ : $C_{N}(p;x,y) \xrightarrow[p \to 0]{} 1$.
\end{remark}

\subsection{The case of using $\xi(p;z)$}
Instead of using $\eta(p;z)$, we can carry out similar calculations by using $\xi(p;z)$. Then we have the following theorem. 

\begin{theorem}[\textbf{Free field realization of the elliptic Macdonald operator by $\xi(p;z)$}] 
The elliptic current $\xi(p;z)$ reproduces the elliptic Macdonald operator $H_{N}(q^{-1},t^{-1},p)$ as follows :
\begin{align}
[\xi(p;z)-t^{N}(\xi(p;z))_{-}(\xi(p;p^{-1}z))_{+}]_{1}\phi_{N}(p;x)|0 \rangle 
=\frac{t^{N-1}\Theta_{p}(t)}{(p;p)_{\infty}^{3}}H_{N}(q^{-1},t^{-1},p)\phi_{N}(p;x)|0 \rangle.
\end{align}
\end{theorem}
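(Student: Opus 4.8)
The plan is to run the argument proving Theorem 4.4 verbatim, with $\eta(p;z)$ replaced by $\xi(p;z)$; the parameter changes $q\mapsto q^{-1}$, $t\mapsto t^{-1}$ on the Macdonald side will emerge automatically. First I would record the operator product expansion of $\xi(p;z)$ against $\phi(p;w)$. By Wick's theorem, contracting the two positive modes of $\xi(p;z)$ with the two negative modes of $\phi(p;w)$ (and rewriting the resulting infinite products as theta functions exactly as in the computation of (4.4)) gives
\begin{align*}
\xi(p;z)\phi(p;w)=\frac{\Theta_{p}(t\gamma w/z)}{\Theta_{p}(\gamma w/z)}\bm{:}\xi(p;z)\phi(p;w)\bm{:},
\end{align*}
so that $\xi(p;z)\phi_{N}(p;x)=\prod_{i=1}^{N}\frac{\Theta_{p}(t\gamma x_{i}/z)}{\Theta_{p}(\gamma x_{i}/z)}\bm{:}\xi(p;z)\phi_{N}(p;x)\bm{:}$, and on the vacuum $\bm{:}\xi(p;z)\phi_{N}(p;x)\bm{:}|0\rangle=(\xi(p;z))_{-}\phi_{N}(p;x)|0\rangle$ since the annihilation part kills $|0\rangle$.

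Next I would expand the scalar prefactor $\prod_{i=1}^{N}\frac{\Theta_{p}(t\gamma x_{i}/z)}{\Theta_{p}(\gamma x_{i}/z)}$. Applying the partial fraction formula (6.19) with all $t_{i}=t$ together with the substitution $x_{j}\mapsto x_{j}^{-1}$, and using $\Theta_{p}(x)=-x\Theta_{p}(x^{-1})$ to rewrite each $\Theta_{p}(tx_{j}/x_{i})/\Theta_{p}(x_{j}/x_{i})$ as $t\,\Theta_{p}(t^{-1}x_{i}/x_{j})/\Theta_{p}(x_{i}/x_{j})$, produces precisely the factors $\prod_{j\neq i}\frac{\Theta_{p}(t^{-1}x_{i}/x_{j})}{\Theta_{p}(x_{i}/x_{j})}$ of $H_{N}(q^{-1},t^{-1},p)$. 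Then, exactly as in the trigonometric Proposition 2.7 and in the $\eta$-case, I would invoke Lemma 3.5 in the form $\frac{1}{\Theta_{p}(v)}=\frac{1}{(p;p)_{\infty}^{3}}\delta(v)-\frac{v^{-1}}{\Theta_{p}(v^{-1})}$ to split each single-variable factor $\Theta_{p}(t^{N}v)/\Theta_{p}(v)$ (with $v=\gamma x_{i}/z$) into a delta part supported at $v=1$ and a regular remainder. A short computation with $\Theta_{p}(x)=-x\Theta_{p}(x^{-1})$ collapses the constant $t^{N-1}\frac{\Theta_{p}(t)}{\Theta_{p}(t^{N})}\cdot\frac{\Theta_{p}(t^{N})}{(p;p)_{\infty}^{3}}$ to $\frac{t^{N-1}\Theta_{p}(t)}{(p;p)_{\infty}^{3}}$, yielding
\begin{align*}
\prod_{i=1}^{N}\frac{\Theta_{p}(t\gamma x_{i}/z)}{\Theta_{p}(\gamma x_{i}/z)}=\frac{t^{N-1}\Theta_{p}(t)}{(p;p)_{\infty}^{3}}\sum_{i=1}^{N}\prod_{j\neq i}\frac{\Theta_{p}(t^{-1}x_{i}/x_{j})}{\Theta_{p}(x_{i}/x_{j})}\delta\Big(\gamma\frac{x_{i}}{z}\Big)+t^{N}\prod_{i=1}^{N}\frac{\Theta_{p}(t^{-1}\gamma^{-1}z/x_{i})}{\Theta_{p}(\gamma^{-1}z/x_{i})}.
\end{align*}

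Having this, I would act on $\phi_{N}(p;x)|0\rangle$. The delta $\delta(\gamma x_{i}/z)$ evaluates $(\xi(p;z))_{-}$ at $z=\gamma x_{i}$; since $(\xi(p;z))_{-}$ and $\phi_{N}(p;x)$ are built from mutually commuting creation modes, the relation $(\xi(p;\gamma z))_{-}\phi(p;z)=\phi(p;q^{-1}z)$ of Proposition 3.4 promotes to $(\xi(p;\gamma x_{i}))_{-}\phi_{N}(p;x)=T_{q^{-1},x_{i}}\phi_{N}(p;x)$, the extra $\xi$-exponent supplying exactly the difference between $\phi(p;x_{i})$ and $\phi(p;q^{-1}x_{i})$ while leaving the other factors untouched. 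The delta part thus assembles into $\frac{t^{N-1}\Theta_{p}(t)}{(p;p)_{\infty}^{3}}H_{N}(q^{-1},t^{-1},p)\phi_{N}(p;x)|0\rangle$. For the regular part I would use $1/\Theta_{p}(px)=1/\Theta_{p}(x^{-1})$ and recognise, via the Wick contraction of $(\xi(p;p^{-1}z))_{+}$ against $\phi_{N}(p;x)$, that $\prod_{i}\frac{\Theta_{p}(t^{-1}\gamma^{-1}z/x_{i})}{\Theta_{p}(\gamma^{-1}z/x_{i})}(\xi(p;z))_{-}\phi_{N}(p;x)|0\rangle=(\xi(p;z))_{-}(\xi(p;p^{-1}z))_{+}\phi_{N}(p;x)|0\rangle$; taking the constant term in $z$ and moving this term to the left-hand side produces (4.16).

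The main obstacle is the middle step: establishing the displayed delta decomposition, and in particular verifying that the regular remainders coming from the partial fraction reassemble into the single clean product $t^{N}\prod_{i}\frac{\Theta_{p}(t^{-1}\gamma^{-1}z/x_{i})}{\Theta_{p}(\gamma^{-1}z/x_{i})}$. This reverse partial fraction, together with the careful tracking of the $\gamma$-shifts and of the region of expansion (which is exactly what the reflection $1/\Theta_{p}(px)=1/\Theta_{p}(x^{-1})$ encodes), is the delicate bookkeeping; everything else is the $\eta$-computation of Theorem 4.4 read with $t\mapsto t^{-1}$ and $q\mapsto q^{-1}$.
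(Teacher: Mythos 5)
Your proposal is correct and is precisely the argument the paper intends: the paper's proof of Theorem 4.5 is just the remark that it is ``similar to the theorem 4.3,'' and your computation mirrors that proof step by step with the right modifications — the OPE factor $\Theta_{p}(t\gamma w/z)/\Theta_{p}(\gamma w/z)$, the partial-fraction/delta decomposition with the correct constants $t^{N-1}\Theta_{p}(t)/(p;p)_{\infty}^{3}$ and $t^{N}$ (your rewriting $\Theta_{p}(tx_{j}/x_{i})/\Theta_{p}(x_{j}/x_{i})=t\,\Theta_{p}(t^{-1}x_{i}/x_{j})/\Theta_{p}(x_{i}/x_{j})$ and the reverse partial fraction both check out), the shift relation $(\xi(p;\gamma x_{i}))_{-}\phi_{N}(p;x)=T_{q^{-1},x_{i}}\phi_{N}(p;x)$ from Proposition 3.4, and the reflection $1/\Theta_{p}(px)=1/\Theta_{p}(x^{-1})$ identifying the regular remainder with $(\xi(p;z))_{-}(\xi(p;p^{-1}z))_{+}$. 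No gaps.
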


The proof of the theorem 4.5 is similar to the theorem 4.3.

\subsection{Another forms of the theorem 4.3, 4.5}
Let us introduce the zero mode generators $a_{0},\,Q$ satisfying the following :
\begin{align}
 [a_{0},Q]=1,\quad [a_{n},a_{0}]=[b_{n},a_{0}]=0, \quad [a_{n},Q]=[b_{n},Q]=0 \quad (n\in\mathbf{Z}\setminus\{0\}).
\end{align}
For a complex number $\alpha$, we define $|\alpha \rangle:=e^{\alpha Q}|0 \rangle$. Then we have $a_{0}|\alpha \rangle=\alpha|\alpha \rangle$.

By using the zero modes, we can reformulate the free field realization of the elliptic Macdonald operator as follows.

\begin{theorem}
Set $\widetilde{\eta}(p;z):=(\eta(p;z))_{-}(\eta(p;p^{-1}z))_{+}$, $\widetilde{\xi}(p;z):=(\xi(p;z))_{-}(\xi(p;p^{-1}z))_{+}$. We define operators $E(p;z)$, $F(p;z)$ as follows :
\begin{align}
E(p;z):=\eta(p;z)-\widetilde{\eta}(p;z)t^{-a_{0}}, \quad F(p;z):=\xi(p;z)-\widetilde{\xi}(p;z)t^{a_{0}}.
\end{align}
Then the elliptic Macdonald operators $H_{N}(q,t,p)$, $H_{N}(q^{-1},t^{-1},p)$ are reproduced from the operators $E(p;z)$, $F(p;z)$ as follows :
\begin{align}
&[E(p;z)]_{1}\phi_{N}(p;x)|N \rangle=\frac{t^{-N+1}\Theta_{p}(t^{-1})}{(p;p)_{\infty}^{3}}H_{N}(q,t,p)\phi_{N}(p;x)|N \rangle, \\
&[F(p;z)]_{1}\phi_{N}(p;x)|N \rangle=\frac{t^{N-1}\Theta_{p}(t)}{(p;p)_{\infty}^{3}}H_{N}(q^{-1},t^{-1},p)\phi_{N}(p;x)|N \rangle.
\end{align}
\end{theorem}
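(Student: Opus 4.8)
The plan is to show that $E(p;z)$ and $F(p;z)$ act on $\phi_N(p;x)|N\rangle$ exactly as the operators of Theorem 4.3 and Theorem 4.5 act on $\phi_N(p;x)|0\rangle$, the scalar prefactors $t^{\mp N}$ being reproduced by the zero-mode operators $t^{\mp a_0}$. Everything rests on the relations (4.13): since $a_0$ and $Q$ commute with every $a_n$ and $b_n$ ($n\in\mathbf{Z}\setminus\{0\}$), they commute with $\eta(p;z)$, $\xi(p;z)$, $\widetilde{\eta}(p;z)$, $\widetilde{\xi}(p;z)$ and with $\phi_N(p;x)$; they also commute with $H_N(q,t,p)$ and $H_N(q^{-1},t^{-1},p)$, which act only on the variables $x_i$.

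First I would record the eigenvalue computation. Because $a_0|N\rangle=N|N\rangle$ while $a_0$ commutes with $\phi_N(p;x)$ and with $\widetilde{\eta}(p;z)$,
\[
\widetilde{\eta}(p;z)\,t^{-a_0}\,\phi_N(p;x)|N\rangle=\widetilde{\eta}(p;z)\,\phi_N(p;x)\,t^{-a_0}|N\rangle=t^{-N}\,\widetilde{\eta}(p;z)\,\phi_N(p;x)|N\rangle,
\]
and likewise $\widetilde{\xi}(p;z)\,t^{a_0}\,\phi_N(p;x)|N\rangle=t^{N}\,\widetilde{\xi}(p;z)\,\phi_N(p;x)|N\rangle$. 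Hence, by the definitions (4.14),
\[
[E(p;z)]_1\phi_N(p;x)|N\rangle=[\eta(p;z)-t^{-N}\widetilde{\eta}(p;z)]_1\phi_N(p;x)|N\rangle,
\]
with the analogous identity for $F(p;z)$ in which $t^{-N}\widetilde{\eta}(p;z)$ is replaced by $t^{N}\widetilde{\xi}(p;z)$.

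Next I would transfer Theorem 4.3 and Theorem 4.5 from $|0\rangle$ to $|N\rangle$. Writing $|N\rangle=e^{NQ}|0\rangle$ and using that $e^{NQ}$ commutes with $\eta(p;z)$, $\widetilde{\eta}(p;z)$, $\phi_N(p;x)$ and $H_N(q,t,p)$, one slides $e^{NQ}$ to the far left of every term; thus the identity (4.5) holds verbatim with $|0\rangle$ replaced by $|N\rangle$, namely
\[
[\eta(p;z)-t^{-N}\widetilde{\eta}(p;z)]_1\phi_N(p;x)|N\rangle=\frac{t^{-N+1}\Theta_p(t^{-1})}{(p;p)_\infty^3}H_N(q,t,p)\phi_N(p;x)|N\rangle.
\]
Combining this with the previous display gives (4.15); the identity (4.16) follows in the same way from (4.12) together with the $t^{a_0}$ computation above.

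Since every commutation invoked is immediate from (4.13), there is no genuine obstacle here; the one point requiring care is the bookkeeping of which operators the zero modes commute with, in particular the observation that the difference operators $H_N(q^{\pm 1},t^{\pm 1},p)$ touch only the $x$-variables and so pass freely through $e^{NQ}$ and $t^{\pm a_0}$. The theorem is therefore a repackaging of Theorem 4.3 and Theorem 4.5 in which the explicit $t^{\mp N}$ prefactors are absorbed into the $N$-independent operators $t^{\mp a_0}$, with all $N$-dependence carried by the charged vacuum $|N\rangle$.
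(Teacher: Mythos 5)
Your proof is correct and is precisely the argument the paper leaves implicit: the paper states this theorem without proof, presenting it as an immediate reformulation of Theorems 4.3 and 4.5, and your zero-mode bookkeeping (that $a_{0}$ and $Q$ commute with all nonzero modes, with $\phi_{N}(p;x)$, and with the difference operators $H_{N}(q^{\pm 1},t^{\pm 1},p)$, so that $t^{\mp a_{0}}$ produces the scalars $t^{\mp N}$ on $|N \rangle$ while $e^{NQ}$ slides through every operator involved) is exactly what that reformulation requires. No further content is needed.
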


\section{Some observations and remarks}
To end this paper, we indicate what remains unclear or should be clarified and give some comments on concerned materials.

\subsection{The method of elliptic deformation}
Looking at the construction of the elliptic currents such as $\eta(p;z), \, \xi(p;z)$ again, we can define a procedure of the elliptic deformation as follows.

\begin{definition}[\textbf{The method of elliptic deformation}] 
Suppose $X(z)$ be an operator of the form
\begin{align*}
X(z)=\exp\bigg(\sum_{n<0}X^{-}_{n}a_{n}z^{-n}\bigg)\exp\bigg(\sum_{n>0}X^{+}_{n}a_{n}z^{-n}\bigg) \quad(X^{\pm}_{n}\in\mathbf{C}),
\end{align*}
where $\{a_{n}\}_{n\in{\mathbf{Z}\setminus\{0\}}}$ are boson generators which satisfy the relation :
\begin{align*}
[a_{m},a_{n}]=m\frac{1-q^{|m|}}{1-t^{|m|}}\delta_{m+n,0}.
\end{align*}
Then the method of elliptic deformation is a procedure as follows :

(Step 1). Change boson generators into the ones satisfying the relations :
\begin{align*}
&[a_{m},a_{n}]=m(1-p^{|m|})\frac{1-q^{|m|}}{1-t^{|m|}}\delta_{m+n,0}, \quad [b_{m},b_{n}]=m\frac{1-p^{|m|}}{(qt^{-1}p)^{|m|}}\frac{1-q^{|m|}}{1-t^{|m|}}\delta_{m+n,0}, \notag\\
&[a_{m},b_{n}]=0.
\end{align*}

(Step 2). Set $X(p;z):=X_{b}(p;z)X_{a}(p;z)$, where
\begin{align}
&X_{b}(p;z):=\exp\bigg(-\sum_{n<0}\frac{p^{|n|}}{1-p^{|n|}}X^{-}_{-n}b_{n}z^{n}\bigg)\exp\bigg(-\sum_{n>0}\frac{p^{|n|}}{1-p^{|n|}}X^{+}_{-n}b_{n}z^{n}\bigg), \\
&X_{a}(p;z):=\exp\bigg(\sum_{n<0}\frac{1}{1-p^{|n|}}X^{-}_{n}a_{n}z^{-n}\bigg)\exp\bigg(\sum_{n>0}\frac{1}{1-p^{|n|}}X^{+}_{n}a_{n}z^{-n}\bigg).
\end{align}
\end{definition}

\subsection{Commutator of operators $E(p;z)$, $F(p;z)$}
In section 3, we showed the proposition 3.7 as
\begin{align*}
[\eta(p;z),\xi(p;w)]=\frac{\Theta_{p}(q)\Theta_{p}(t^{-1})}{(p;p)_{\infty}^{3}\Theta_{p}(qt^{-1})}\bigg\{\delta\Big(\gamma\frac{w}{z}\Big)\varphi^{+}(p;\gamma^{1/2}w)-\delta\Big(\gamma^{-1}\frac{w}{z}\Big)\varphi^{-}(p;\gamma^{-1/2}w)\bigg\}.
\end{align*} 
Since $[\varphi^{+}(p;z)]_{1}\neq [\varphi^{-}(p;z)]_{1}$, we have $[[\eta(p;z)]_{1},[\xi(p;w)]_{1}]\neq 0$. Compared to this, operators $E(p;z)$, $F(p;z)$ defined in (4.14) satisfy the following.

\begin{prop}
(1) For operators $E(p;z)$, $F(p;z)$ we have
\begin{align}
E(p;z)E(p;w)&=g_{p}\Big(\frac{z}{w}\Big)E(p;w)E(p;z), \\
F(p;z)F(p;w)&=g_{p}\Big(\frac{z}{w}\Big)^{-1}F(p;w)F(p;z).
\end{align}
(2) The commutator of operators $E(p;z)$, $F(p;z)$ takes the form as
\begin{align}
[E(p;z),F(p;w)]=\frac{\Theta_{p}(q)\Theta_{p}(t^{-1})}{(p;p)_{\infty}^{3}\Theta_{p}(qt^{-1})}\delta\Big(\gamma\frac{w}{z}\Big)\{\varphi^{+}(p;\gamma^{1/2}w)-\varphi^{+}(p;\gamma^{1/2}p^{-1}w)\}.
\end{align}
\end{prop}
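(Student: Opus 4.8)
The plan is to reduce everything to the single-current operator product expansions of Section 3 and to the delta-function identities of Lemma 3.5. Throughout I abbreviate $\eta=\eta(p;\cdot)$, etc., and I use two structural facts: first, that $\widetilde\eta(p;z)=(\eta(p;z))_{-}(\eta(p;p^{-1}z))_{+}$ shifts \emph{only} the annihilation (plus) part by $z\mapsto p^{-1}z$ and leaves the creation (minus) part untouched, and likewise for $\widetilde\xi$; second, that $t^{\pm a_{0}}$ is central with respect to all the nonzero modes $a_{n},b_{n}$.

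For part (1), I would first record that the $\eta$-$\eta$ OPE factor $\Theta_{p}(w/z)\Theta_{p}(qt^{-1}w/z)/\Theta_{p}(qw/z)\Theta_{p}(t^{-1}w/z)$ is invariant under $w/z\mapsto p\,w/z$, which is immediate from $\Theta_{p}(px)=-x^{-1}\Theta_{p}(x)$; the same holds for the $\xi$-$\xi$ factor. Since passing from a current to its tilde only rescales the argument of the relevant OPE factor by a power of $p$, each of the four products $\eta(z)\eta(w)$, $\eta(z)\widetilde\eta(w)$, $\widetilde\eta(z)\eta(w)$, $\widetilde\eta(z)\widetilde\eta(w)$ obeys the same exchange relation with factor $g_{p}(z/w)$, and the central elements $t^{-a_{0}}$ reorder freely. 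Expanding $E(p;z)E(p;w)$ into these four products and collecting then yields $E(p;z)E(p;w)=g_{p}(z/w)E(p;w)E(p;z)$; the computation for $F$ is identical with $g_{p}(z/w)^{-1}$.

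For part (2), I would expand, using centrality of $t^{\pm a_{0}}$,
\[
[E(p;z),F(p;w)]=[\eta,\xi]-[\eta,\widetilde\xi]\,t^{a_{0}}-[\widetilde\eta,\xi]\,t^{-a_{0}}+[\widetilde\eta,\widetilde\xi].
\]
The crucial observation is that the $p^{-1}$-shift of the plus part makes the two orderings $\eta(z)\widetilde\xi(w)$ and $\widetilde\xi(w)\eta(z)$ produce the very same OPE factor, as a formal Laurent series in one common annulus (the shift in the annihilation part of $\widetilde\xi$ reproduces in $\widetilde\xi(w)\eta(z)$ exactly the contraction of $\eta(z)\widetilde\xi(w)$). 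Hence $[\eta,\widetilde\xi]=0$, and likewise $[\widetilde\eta,\xi]=0$, so the zero-mode terms drop out entirely and $[E,F]=[\eta,\xi]+[\widetilde\eta,\widetilde\xi]$.

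It then remains to compute $[\widetilde\eta,\widetilde\xi]$. Its two OPE factors are the same meromorphic functions as for $[\eta,\xi]$ (again by $p$-ellipticity), but expanded in the \emph{opposite} annuli; feeding this into Lemma 3.5 as in the proof of Proposition 3.7 produces the same two delta functions $\delta(\gamma w/z)$ and $\delta(\gamma^{-1}w/z)$ with the opposite overall sign, namely
\[
[\widetilde\eta,\widetilde\xi]=-\frac{\Theta_{p}(q)\Theta_{p}(t^{-1})}{(p;p)_{\infty}^{3}\Theta_{p}(qt^{-1})}\Big\{\delta(\gamma w/z)\,\bm{:}\widetilde\eta(\gamma w)\widetilde\xi(w)\bm{:}-\delta(\gamma^{-1}w/z)\,\bm{:}\widetilde\eta(\gamma^{-1}w)\widetilde\xi(w)\bm{:}\Big\}.
\]
The final step is to identify these mixed normal-ordered products. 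Since $\varphi^{+}$ contains only annihilation modes and $\varphi^{-}$ only creation modes (Theorem 3.8), I would check directly that at each delta support the ``wrong'' half of $\bm{:}\widetilde\eta\,\widetilde\xi\bm{:}$ trivializes: the $\gamma$-shift aligns the creation parts of $\eta(\gamma w)$ and $\xi(w)$ so that their exponents cancel, leaving $\bm{:}\widetilde\eta(\gamma w)\widetilde\xi(w)\bm{:}=\varphi^{+}(\gamma^{1/2}p^{-1}w)$, while at the other support the annihilation parts cancel, leaving $\bm{:}\widetilde\eta(\gamma^{-1}w)\widetilde\xi(w)\bm{:}=\varphi^{-}(\gamma^{-1/2}w)$. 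Adding $[\eta,\xi]$ and $[\widetilde\eta,\widetilde\xi]$, the two $\varphi^{-}$ contributions cancel and the two $\varphi^{+}$ contributions combine into the asserted difference. The main obstacle is the bookkeeping of the convergence annuli that makes the cross-commutators vanish and flips the sign of $[\widetilde\eta,\widetilde\xi]$; once that is secured, the collapse of the mixed products is a short direct computation with the mode expansions.
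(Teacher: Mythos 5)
Your proposal is correct and follows essentially the same route as the paper's proof: part (1) via the $p$-periodicity of the $\eta$-$\eta$ and $\xi$-$\xi$ OPE factors so that all four tilde/non-tilde products exchange with the same $g_{p}(z/w)$, and part (2) via the vanishing of the cross-commutators $[\eta,\widetilde\xi]=[\widetilde\eta,\xi]=0$ (in the paper this appears as identical $A$-factors in $EF$ and $FE$), the sign-flipped delta expansion of $[\widetilde\eta,\widetilde\xi]$ through the identity $A(x)-A(x^{-1})=\frac{\Theta_{p}(q)\Theta_{p}(t^{-1})}{(p;p)_{\infty}^{3}\Theta_{p}(qt^{-1})}\{\delta(\gamma x)-\delta(\gamma^{-1}x)\}$, and the identifications $\bm{:}\widetilde\eta(p;\gamma w)\widetilde\xi(p;w)\bm{:}=\varphi^{+}(p;\gamma^{1/2}p^{-1}w)$, $\bm{:}\widetilde\eta(p;\gamma^{-1}w)\widetilde\xi(p;w)\bm{:}=\varphi^{-}(p;\gamma^{-1/2}w)$ coming from the cancellation of the creation (respectively annihilation) halves. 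The only difference is organizational: the paper computes $E(p;z)F(p;w)$ and $F(p;w)E(p;z)$ in full and subtracts, while you isolate the vanishing cross-commutators first, which is the same computation.
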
 

\begin{proof}
(1) Here we are going to show (5.3). First we can check the following :
\begin{align}
\eta(p;z)\widetilde{\eta}(p;w)&=\frac{\Theta_{p}(w/z)\Theta_{p}(qt^{-1}w/z)}{\Theta_{p}(qw/z)\Theta_{p}(t^{-1}w/z)}\bm{:}\eta(p;z)\widetilde{\eta}(p;w)\bm{:}, \\
\widetilde{\eta}(p;z)\eta(p;w)&=\frac{\Theta_{p}(w/p^{-1}z)\Theta_{p}(qt^{-1}w/p^{-1}z)}{\Theta_{p}(qw/p^{-1}z)\Theta_{p}(t^{-1}w/p^{-1}z)}\bm{:}\widetilde{\eta}(p;z)\widetilde{\eta}(p;w)\bm{:} \notag\\
&=\frac{\Theta_{p}(z/w)\Theta_{p}(q^{-1}tz/w)}{\Theta_{p}(q^{-1}z/w)\Theta_{p}(tz/w)}\bm{:}\widetilde{\eta}(p;z)\eta(p;w)\bm{:}, \\
\widetilde{\eta}(p;z)\widetilde{\eta}(p;w)&=\frac{\Theta_{p}(z/w)\Theta_{p}(q^{-1}tz/w)}{\Theta_{p}(q^{-1}z/w)\Theta_{p}(tz/w)}\bm{:}\widetilde{\eta}(p;z)\widetilde{\eta}(p;w)\bm{:}.
\end{align}
From them we have
\begin{align}
\eta(p;z)\widetilde{\eta}(p;w)&=g_{p}\Big(\frac{z}{w}\Big)\widetilde{\eta}(p;w)\eta(p;z), \\
\widetilde{\eta}(p;z)\eta(p;w)&=g_{p}\Big(\frac{z}{w}\Big)\eta(p;w)\widetilde{\eta}(p;z), \\
\widetilde{\eta}(p;z)\widetilde{\eta}(p;w)&=g_{p}\Big(\frac{z}{w}\Big)\widetilde{\eta}(p;w)\widetilde{\eta}(p;z).
\end{align}
Hence we have the following.
\begin{align*}
&\quad E(p;z)E(p;w)=(\eta(p;z)-\widetilde{\eta}(p;z)t^{-a_{0}})(\eta(p;w)-\widetilde{\eta}(p;w)t^{-a_{0}}) \\
&=\eta(p;z)\eta(p;w)-\eta(p;z)\widetilde{\eta}(p;w)t^{-a_{0}}-\widetilde{\eta}(p;z)\eta(p;w)t^{-a_{0}}+\widetilde{\eta}(p;z)\widetilde{\eta}(p;w)t^{-2a_{0}} \\
&=g_{p}\Big(\frac{z}{w}\Big)(\eta(p;w)\eta(p;z)-\eta(p;w)\widetilde{\eta}(p;z)t^{-a_{0}}-\widetilde{\eta}(p;w)\eta(p;z)t^{-a_{0}}+\widetilde{\eta}(p;w)\widetilde{\eta}(p;z)t^{-2a_{0}}) \\
&=g_{p}\Big(\frac{z}{w}\Big)E(p;w)E(p;z).
\end{align*}

(2) Let us recall the relations shown in the proposition 3.7 as 
\begin{align}
&\eta(p;z)\xi(p;w)=\frac{\Theta_{p}(q\gamma w/z)\Theta_{p}(q^{-1}\gamma^{-1}w/z)}{\Theta_{p}(\gamma w/z)\Theta_{p}(\gamma^{-1}w/z)}\bm{:}\eta(p;z)\xi(p;w)\bm{:}, \\
&\xi(p;w)\eta(p;z)=\frac{\Theta_{p}(q\gamma z/w)\Theta_{p}(q^{-1}\gamma^{-1}z/w)}{\Theta_{p}(\gamma z/w)\Theta_{p}(\gamma^{-1}z/w)}\bm{:}\eta(p;z)\xi(p;w)\bm{:}.
\end{align}
We define $A(x)$ as follows :
\begin{align}
A(x):=\frac{\Theta_{p}(q\gamma x)\Theta_{p}(q^{-1}\gamma^{-1}x)}{\Theta_{p}(\gamma x)\Theta_{p}(\gamma^{-1}x)}.
\end{align}
Then we have
\begin{align*}
&\quad E(p;z)F(p;w)=(\eta(p;z)-\widetilde{\eta}(p;z)t^{-a_{0}})(\xi(p;w)-\widetilde{\xi}(p;w)t^{a_{0}}) \\
&=A\Big(\frac{w}{z}\Big)\bm{:}\eta(p;z)\xi(p;w)\bm{:}-A\Big(\frac{w}{z}\Big)\bm{:}\eta(p;z)\widetilde{\xi}(p;w)\bm{:}t^{a_{0}} \\
&\hskip 2cm -A\Big(\frac{w}{p^{-1}z}\Big)\bm{:}\widetilde{\eta}(p;z)\xi(p;w)\bm{:}t^{-a_{0}}+A\Big(\frac{w}{p^{-1}z}\Big)\bm{:}\widetilde{\eta}(p;z)\widetilde{\xi}(p;w)\bm{:} \\
&=A\Big(\frac{w}{z}\Big)\bm{:}\eta(p;z)\xi(p;w)\bm{:}-A\Big(\frac{w}{z}\Big)\bm{:}\eta(p;z)\widetilde{\xi}(p;w)\bm{:}t^{a_{0}} \\
&\hskip 2cm -A\Big(\frac{z}{w}\Big)\bm{:}\widetilde{\eta}(p;z)\xi(p;w)\bm{:}t^{-a_{0}}+A\Big(\frac{z}{w}\Big)\bm{:}\widetilde{\eta}(p;z)\widetilde{\xi}(p;w)\bm{:},
\end{align*}
\begin{align*}
&\quad F(p;w)E(p;z)=(\xi(p;w)-\widetilde{\xi}(p;w)t^{a_{0}})(\eta(p;z)-\widetilde{\eta}(p;z)t^{-a_{0}}) \\
&=A\Big(\frac{z}{w}\Big)\bm{:}\eta(p;z)\xi(p;w)\bm{:}-A\Big(\frac{z}{w}\Big)\bm{:}\widetilde{\eta}(p;z)\xi(p;w)\bm{:}t^{-a_{0}} \\
&\hskip 2cm -A\Big(\frac{z}{p^{-1}w}\Big)\bm{:}\eta(p;z)\widetilde{\xi}(p;w)\bm{:}t^{a_{0}}+A\Big(\frac{z}{p^{-1}w}\Big)\bm{:}\widetilde{\eta}(p;z)\widetilde{\xi}(p;w)\bm{:} \\
&=A\Big(\frac{z}{w}\Big)\bm{:}\eta(p;z)\xi(p;w)\bm{:}-A\Big(\frac{z}{w}\Big)\bm{:}\widetilde{\eta}(p;z)\xi(p;w)\bm{:}t^{-a_{0}} \\
&\hskip 2cm -A\Big(\frac{w}{z}\Big)\bm{:}\eta(p;z)\widetilde{\xi}(p;w)\bm{:}t^{a_{0}}+A\Big(\frac{w}{z}\Big)\bm{:}\widetilde{\eta}(p;z)\widetilde{\xi}(p;w)\bm{:}. 
\end{align*}
Here we use the relation $A(px)=A(x^{-1})$. From them we have
\begin{align*}
[E(p;z),F(p;w)]=\bigg\{A\Big(\frac{w}{z}\Big)-A\Big(\frac{z}{w}\Big)\bigg\}(\bm{:}\eta(p;z)\xi(p;w)\bm{:}-\bm{:}\widetilde{\eta}(p;z)\widetilde{\xi}(p;w)\bm{:}).
\end{align*}
Let us recall that
\begin{align}
A(x)-A(x^{-1})=\frac{\Theta_{p}(q)\Theta_{p}(t^{-1})}{(p;p)_{\infty}^{3}\Theta_{p}(qt^{-1})}\{\delta(\gamma x)-\delta(\gamma^{-1}x)\}.
\end{align}
Using the relation we have
\begin{align*}
&\quad [E(p;z),F(p;w)] \\
&=\frac{\Theta_{p}(q)\Theta_{p}(t^{-1})}{(p;p)_{\infty}^{3}\Theta_{p}(qt^{-1})}\bigg\{\delta\Big(\gamma\frac{w}{z}\Big)-\delta\Big(\gamma^{-1}\frac{w}{z}\Big)\bigg\}(\bm{:}\eta(p;z)\xi(p;w)\bm{:}-\bm{:}\widetilde{\eta}(p;z)\widetilde{\xi}(p;w)\bm{:}) \\
&=\frac{\Theta_{p}(q)\Theta_{p}(t^{-1})}{(p;p)_{\infty}^{3}\Theta_{p}(qt^{-1})}\delta\Big(\gamma\frac{w}{z}\Big)(\bm{:}\eta(p;\gamma w)\xi(p;w)\bm{:}-\bm{:}\widetilde{\eta}(p;\gamma w)\widetilde{\xi}(p;w)\bm{:}) \\
&\quad -\frac{\Theta_{p}(q)\Theta_{p}(t^{-1})}{(p;p)_{\infty}^{3}\Theta_{p}(qt^{-1})}\delta\Big(\gamma^{-1}\frac{w}{z}\Big)(\bm{:}\eta(p;\gamma^{-1}w)\xi(p;w)\bm{:}-\bm{:}\widetilde{\eta}(p;\gamma^{-1}w)\widetilde{\xi}(p;w)\bm{:}).
\end{align*}
Then we have $\bm{:}\eta(p;\gamma w)\xi(p;w)\bm{:}=\varphi^{+}(p;\gamma^{1/2}w)$, $\bm{:}\eta(p;\gamma^{-1}w)\xi(p;w)\bm{:}=\varphi^{-}(p;\gamma^{-1/2}w)$ and also have
\begin{align*}
\bm{:}\widetilde{\eta}(p;\gamma w)\widetilde{\xi}(p;w)\bm{:}
&=(\eta(p;\gamma w))_{-}(\xi(p;w))_{-}(\eta(p;\gamma p^{-1}w))_{+}(\xi(p; p^{-1}w))_{+} \\
&=\varphi^{+}(p;\gamma^{1/2}p^{-1}w), \\
\bm{:}\widetilde{\eta}(p;\gamma^{-1}w)\widetilde{\xi}(p;w)\bm{:}
&=(\eta(p;\gamma^{-1}w))_{-}(\xi(p;w))_{-}(\eta(p;\gamma^{-1}p^{-1}w))_{+}(\xi(p;p^{-1}w))_{+} \\
&=\varphi^{-}(p;\gamma^{-1/2}w).
\end{align*}
Therefore we have (5.5). \quad $\Box$
\end{proof}

\begin{remark}
From the relation (5.5), we have the commutativity of constant terms $[E(p;z)]_{1}$, $[F(p;z)]_{1}$ : $[[E(p;z)]_{1},[F(p;w)]_{1}]=0$. This corresponds to the commutativity of the elliptic Macdonald operators as $[H_{N}(q,t,p),H_{N}(q^{-1},t^{-1},p)]=0$. It seems that for the free field realization of the elliptic Macdonald operator, we should use the operators $E(p;z)$ and $F(p;z)$.
\end{remark}

\subsection{Perspectives}
In this paper, we have considered an elliptic analog of the Ding-Iohara algebra and a possibility of the free field realization of the elliptic Macdonald operator. In the following, we mention some ideas which can be cultivated in the future.

\subsubsection{Elliptic $q$-Virasoro algebra, elliptic $q$-$W_{N}$ algebra}
As we have shown, starting from the elliptic kernel function $\Pi(q,t,p)(x,y)$ we can construct the elliptic currents $\eta(p;z),\,\xi(p;z)$ and $\varphi^{\pm}(p;z)$ which satisfy the relations of the elliptic Ding-Iohara algebra. Furthermore we obtain the procedure of making elliptic currents, namely the method of elliptic deformation. Actually, we can apply the method of elliptic deformation to the free field realization of the $q$-Virasoro algebra, consequently an elliptic analog of the $q$-Virasoro algebra arises. Similarly, we can also construct free field realization of an elliptic analog of the $q$-$W_{N}$ algebra. In a near future, we would like to report these materials as the continuation of the present paper [26].

\subsubsection{Research of elliptic Macdonald symmetric functions}
To construct an elliptic analog of the Macdonald symmetric functions (in the following, we call it the elliptic Macdonald symmetric functions for short) is required for good understanding and research of some materials, for example the elliptic Ruijsenaars model [1], the superconformal index [22][23][24], etc. To construct the elliptic Macdonald symmetric functions, there would be a possibility to have an elliptic analog of the integral representations of the Macdonald symmetric functions. The integral representations of the Macdonald symmetric functions tells us that the Macdonald symmetric functions can be reproduced by the kernel function $\Pi(q,t)(x,y)$ and the weight function $\Delta(q,t)(x)$ defined by
\begin{align*}
\Delta(q,t)(x):=\prod_{i\neq{j}}\frac{(x_{i}/x_{j};q)_{\infty}}{(tx_{i}/x_{j};q)_{\infty}}
\end{align*}
and the ``seed" of the Macdonald symmetric functions [9]. The seed of the Macdonald symmetric functions are monomials. As is seen in the previous sections, we already have the elliptic kernel function $\Pi(q,t,p)(x,y)$, and the elliptic weight function $\Delta(q,t,p)(x)$ is also known [13] :
\begin{align*}
\Delta(q,t,p)(x):=\prod_{i\neq{j}}\frac{\Gamma_{q,p}(tx_{i}/x_{j})}{\Gamma_{q,p}(x_{i}/x_{j})}.
\end{align*}
But we don't know what is the seed of the elliptic Macdonald symmetric functions, i.e. the most simplest and nontrivial eigen functions of the elliptic Macdonald operator are not known. Therefore to construct an elliptic analog of the integral representations of the Macdonald symmetric functions is not accomplished.

On the other hand, it is known that the singular vectors of the $q$-Virasoro algebra and the $q$-$W_{N}$ algebra corresponds to the Macdonald symmetric functions [10][11][12]. Perhaps there would be a way to make the elliptic Macdonald symmetric functions from the elliptic analog of the $q$-Virasoro algebra. In the continuation paper [26], we construct an elliptic analog of the screening currents of the $q$-Virasoro algebra, and a correlation function of product of the elliptic screening currents reproduces the elliptic kernel function $\Pi(q,t,p)(x,y)$ as well as the elliptic weight function $\Delta(q,t,p)(x)$. But as we mentioned above, an elliptic analog of the integral representations of the Macdonald symmetric functions is not obtained yet.

\medskip
\textbf{Acknowledgement.}
The author would like to thank Koji Hasegawa and Gen Kuroki for helpful discussions and comments.

\section{Appendix}
\subsection{Appendix A : Boson calculus}
In this subsection we review some basic facts of boson calculus.

\begin{prop}
Let $\mathcal{A}$ be an associative $\mathbf{C}$-algebra. For $A\in\mathcal{A}$, we set the exponential of $A$ denoted by $e^{A}$ as follows.
\begin{align*}
e^{A}:=\exp(A):=\sum_{n\geq{0}}\frac{1}{n!}A^{n}.
\end{align*}
\textit{Then for $A,B\in{\mathcal{A}}$, the following holds :}
\begin{align*}
e^{A}Be^{-A}=e^{{\rm ad}(A)}B,
\end{align*}
where we define ${\rm ad}(A)B:=AB-BA$.
\end{prop}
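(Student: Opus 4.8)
The plan is to prove the identity by reducing it to a single combinatorial expansion of iterated commutators together with the Cauchy product of the two exponential series on the left. The guiding heuristic is the parameter-dependent statement: introducing $s$ and setting $F(s):=e^{sA}Be^{-sA}$, one has $F(0)=B$ and, differentiating formally, $F'(s)=Ae^{sA}Be^{-sA}-e^{sA}Be^{-sA}A=\mathrm{ad}(A)F(s)$, so $F$ must coincide with $\sum_{n\geq0}\frac{s^n}{n!}(\mathrm{ad}(A))^nB=e^{s\,\mathrm{ad}(A)}B$, and the case $s=1$ is the assertion. To turn this into a proof that does not presuppose a calculus on $\mathcal{A}$, I would instead match coefficients directly.

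The key step is the lemma
\[
(\mathrm{ad}(A))^nB=\sum_{k=0}^{n}\binom{n}{k}(-1)^{n-k}A^kBA^{n-k},
\]
proved by induction on $n$. The case $n=0$ is immediate, and for the inductive step I would apply $\mathrm{ad}(A)$ to the right-hand side, producing terms $A^{k+1}BA^{n-k}$ and $-A^kBA^{n-k+1}$, and then recombine them using Pascal's rule $\binom{n}{k}+\binom{n}{k-1}=\binom{n+1}{k}$ to recover the formula at level $n+1$.

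Substituting this lemma into $e^{\mathrm{ad}(A)}B=\sum_{n\geq0}\frac1{n!}(\mathrm{ad}(A))^nB$ and using $\binom{n}{k}/n!=1/(k!\,(n-k)!)$ turns the expression into the double sum $\sum_{n\geq0}\sum_{k=0}^{n}\frac{1}{k!}\frac{(-1)^{n-k}}{(n-k)!}A^kBA^{n-k}$. Reindexing by $m=n-k$ decouples the two summations, and the sum factors as $\big(\sum_{k\geq0}\tfrac{A^k}{k!}\big)B\big(\sum_{m\geq0}\tfrac{(-A)^m}{m!}\big)=e^ABe^{-A}$, which is precisely the left-hand side.

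The main obstacle is not any individual computation but justifying the rearrangement of the double series, which presupposes that $e^A$ makes sense as an element of $\mathcal{A}$. In the concrete setting of this paper, where $A$ and $B$ are built from the boson operators $a_n,b_n$, each such series acts with only finitely many nonzero terms on any fixed basis vector $a_{-\lambda}b_{-\mu}|0\rangle$ of $\mathcal{F}$, so the reindexing is valid termwise; in the abstract formulation the same manipulation is to be read formally in a suitable completion of $\mathcal{A}$, where the interchange of summations is automatic.
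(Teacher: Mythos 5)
Your proof is correct, but it takes a genuinely different route from the paper's. The paper argues analytically: it sets $F(t):=e^{tA}Be^{-tA}$, verifies $\frac{d^{n}}{dt^{n}}F(t)\mid_{t=0}=\mathrm{ad}(A)^{n}B$, invokes the Taylor expansion of $F$ around $t=0$ to conclude $F(t)=e^{t\,\mathrm{ad}(A)}B$, and sets $t=1$ --- precisely the parameter-dependent heuristic you state at the outset and then deliberately set aside. Your argument replaces that calculus with pure algebra: the key lemma
\begin{align*}
(\mathrm{ad}(A))^{n}B=\sum_{k=0}^{n}\binom{n}{k}(-1)^{n-k}A^{k}BA^{n-k},
\end{align*}
proved by induction via Pascal's rule, followed by a Cauchy-product reindexing ($m=n-k$) that factors the double sum into $e^{A}Be^{-A}$; I checked the induction and the reindexing and both are sound. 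As for what each approach buys: the paper's proof is shorter and yields the $t$-dependent identity for free, but it silently presupposes that $t\mapsto e^{tA}Be^{-tA}$ is differentiable and equal to its Taylor series, which is not justified in a bare associative algebra; your proof trades that analytic machinery for a binomial identity, produces the explicit expansion of $(\mathrm{ad}(A))^{n}B$ as a by-product, and isolates the only genuine issue exactly where it lives --- the rearrangement of the double series, i.e.\ the very meaning of $e^{A}$. Your closing remark is the right way to discharge that issue in the setting where the paper actually uses the proposition (Wick's theorem on $\mathcal{F}$): there the annihilation parts act locally nilpotently on each vector $a_{-\lambda}b_{-\mu}|0\rangle$, and the creation parts are handled degree-by-degree, so the manipulations are valid in the graded completion; the paper's own proof needs the same caveat but never states it.
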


\begin{proof}[\textit{Proof}]
Let us define $F(t):=e^{tA}Be^{-tA} \, (t\in\mathbf{C})$, then we can check the following :
\begin{align*}
\frac{d^{n}}{dt^{n}}F(t)\mid_{t=0}=\text{ad}(A)^{n}B \quad (n\geq0).
\end{align*}
By the Taylor expansion of $F(t)$ around $t=0$, we have 
\begin{align*}
F(t)=\sum_{n\geq 0}\frac{t^{n}}{n!}\frac{d^{n}}{dt^{n}}F(t)\mid_{t=0}=\sum_{n\geq 0}\frac{t^{n}}{n!}\text{ad}(A)^{n}B=e^{t\text{ad}(A)}B.
\end{align*}
From this expression of $F(t)$, we have $F(1)=e^{A}Be^{-A}=e^{\text{ad}(A)}B$. \quad $\Box$
\end{proof}

By this proposition, we have $e^{A}e^{B}e^{-A}=\exp(e^{\text{ad}(A)}B)$. Then the corollary holds.

\begin{cor}
For $A,B\in \mathcal{A}$, if $[A,B]\in{\mathbf{C}}$ we have the following :
\begin{align*}
e^{A}e^{B}=e^{[A,B]}e^{B}e^{A}.
\end{align*}
\end{cor}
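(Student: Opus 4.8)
The plan is to derive the corollary directly from Proposition 6.1 and the identity recorded immediately after its proof, namely $e^{A}e^{B}e^{-A}=\exp(e^{\mathrm{ad}(A)}B)$. The only input specific to the corollary is the hypothesis $[A,B]\in\mathbf{C}$, which I will use to collapse the series $e^{\mathrm{ad}(A)}B$ to a two-term expression.

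First I would compute the action of $e^{\mathrm{ad}(A)}$ on $B$. By definition $\mathrm{ad}(A)B=[A,B]$, and since $[A,B]$ is a scalar it is central in $\mathcal{A}$, so $\mathrm{ad}(A)^{2}B=[A,[A,B]]=0$; consequently $\mathrm{ad}(A)^{n}B=0$ for all $n\geq 2$. Therefore the series truncates,
\begin{align*}
e^{\mathrm{ad}(A)}B=\sum_{n\geq 0}\frac{1}{n!}\mathrm{ad}(A)^{n}B=B+[A,B].
\end{align*}

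Next I would substitute this into $e^{A}e^{B}e^{-A}=\exp(e^{\mathrm{ad}(A)}B)$ to get $e^{A}e^{B}e^{-A}=\exp(B+[A,B])$. Because $[A,B]$ is a scalar it commutes with $B$, so the exponential of the sum factors as $\exp(B+[A,B])=e^{[A,B]}e^{B}$. Multiplying both sides on the right by $e^{A}$ then yields $e^{A}e^{B}=e^{[A,B]}e^{B}e^{A}$, which is the claim.

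There is no genuine obstacle here: the argument is a direct specialization of Proposition 6.1. The two points that warrant a line of justification are the truncation of the $\mathrm{ad}(A)$-series (which rests on the centrality of the scalar $[A,B]$) and the factorization $\exp(B+[A,B])=e^{[A,B]}e^{B}$ (again valid precisely because $[A,B]$ commutes with $B$). Both are immediate once the scalarity of $[A,B]$ is invoked, so the proof is short.
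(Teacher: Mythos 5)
Your proof is correct and follows exactly the route the paper intends: the paper derives $e^{A}e^{B}e^{-A}=\exp\bigl(e^{\mathrm{ad}(A)}B\bigr)$ from Proposition 6.1 and then simply asserts ``Then the corollary holds,'' and your argument supplies precisely the two omitted steps (truncation $e^{\mathrm{ad}(A)}B=B+[A,B]$ via centrality of the scalar $[A,B]$, and the factorization $\exp(B+[A,B])=e^{[A,B]}e^{B}$). Nothing to correct.
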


This corollary is essentially the same as Wick's theorem which we use frequently in this paper.

Next we are going to prove Wick's theorem. First we set an associative $\mathbf{C}$-algebra denoted by $\mathcal{B}$ which is generated by $\{a_{n}\}_{n\in{\mathbf{Z}\setminus\{0\}}}$ and the defining relation :
\begin{align}
[a_{m},a_{n}]=A(m)\delta_{m+n,0} \quad (A(m)\in{\mathbf{C}}).
\end{align}
We call this type algebras \textit{bosons.} For example, if we choose $A(m)=m\displaystyle \frac{1-q^{|m|}}{1-t^{|m|}}$ then 
\begin{align}
[a_{m},a_{n}]=m\frac{1-q^{|m|}}{1-t^{|m|}}\delta_{m+n,0},
\end{align}
where this is one of the algebra of boson used in this paper. We define the normal ordering  $\bm{:} \bullet \bm{:}$ by
\begin{align}
\bm{:}a_{m}a_{n}\bm{:}=
\begin{cases}
a_{m}a_{n} \quad (m<n), \\
a_{n}a_{m} \quad (m\geq{n}).
\end{cases}
\end{align}
For $\{X_{n}\}_{n\in{\mathbf{Z}\setminus\{0\}}} \,(X_{n}\in{\mathbf{C}})$, we set $X(z)\in{\mathcal{B}\otimes\mathbf{C}[[z,z^{-1}]]}$ as a formal power series by
\begin{align*}
X(z):=\sum_{n\neq{0}}X_{n}a_{n}z^{-n}.
\end{align*}
We define \textit{the plus part of $X(z)$} denoted by $(X(z))_{+}$ and \textit{the minus part of $X(z)$} denoted by $(X(z))_{-}$ as follows :
\begin{align}
(X(z))_{+}:=\sum_{n>0}X_{n}a_{n}z^{-n}, \quad (X(z))_{-}:=\sum_{n<0}X_{n}a_{n}z^{-n}.
\end{align}
In this notation, we have 
\begin{align*}
\bm{:}\exp(X(z))\bm{:}=\exp((X(z))_{-})\exp((X(z))_{+}).
\end{align*}

\begin{prop}[\textbf{Wick's theorem}]
For boson operators $X(z)\in\mathcal{B}\otimes\mathbf{C}[[z,z^{-1}]]$ and $Y(w)\in\mathcal{B}\otimes\mathbf{C}[[w,w^{-1}]]$, if $[(X(z))_{+},(Y(w))_{-}]\in\mathbf{C}[[(w/z)]]$ exists we define $\langle X(z),Y(w) \rangle$ by
\begin{align}
\langle X(z),Y(w) \rangle:=[(X(z))_{+},(Y(w))_{-}].
\end{align}
Then we have
\begin{align}
\bm{:}\exp(X(z))\bm{:}\bm{:}\exp(Y(w))\bm{:}=\exp(\langle X(z),Y(w) \rangle)\bm{:}\exp(X(z))\exp(Y(w))\bm{:}.
\end{align}
\end{prop}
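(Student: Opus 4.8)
The plan is to reduce the whole identity to the scalar-commutator relation of the Corollary (the one reading $e^{A}e^{B}=e^{[A,B]}e^{B}e^{A}$), since the only genuine reordering involved is moving the annihilation (plus) part of $X(z)$ past the creation (minus) part of $Y(w)$. First I would expand both sides using the definition of normal ordering, $\bm{:}\exp(X(z))\bm{:}=\exp((X(z))_{-})\exp((X(z))_{+})$ and likewise for $Y(w)$. The left-hand side then becomes the four-fold product
\[
\exp((X(z))_{-})\exp((X(z))_{+})\exp((Y(w))_{-})\exp((Y(w))_{+}),
\]
so that the assertion is entirely about swapping the middle two factors and tracking the scalar that this swap produces.

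Next I would check that $[(X(z))_{+},(Y(w))_{-}]$ is central. Expanding it term by term with $[a_{m},a_{n}]=A(m)\delta_{m+n,0}$ gives $\sum_{n>0}X_{n}Y_{-n}A(n)(w/z)^{n}$, which is a scalar formal series in $w/z$; this is precisely $\langle X(z),Y(w)\rangle$, and the hypothesis guarantees it lies in $\mathbf{C}[[w/z]]$. Since a series in $w/z$ carries no boson content, it commutes with every factor, so the proof of the Corollary applies verbatim with $A=(X(z))_{+}$, $B=(Y(w))_{-}$, yielding
\[
\exp((X(z))_{+})\exp((Y(w))_{-})=\exp(\langle X(z),Y(w)\rangle)\exp((Y(w))_{-})\exp((X(z))_{+}).
\]
Because $\langle X(z),Y(w)\rangle$ is scalar it may then be pulled out to the front of the whole expression.

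Finally I would recognize the leftover product $\exp((X(z))_{-})\exp((Y(w))_{-})\exp((X(z))_{+})\exp((Y(w))_{+})$ as $\bm{:}\exp(X(z))\exp(Y(w))\bm{:}$. The point is that creation operators commute among themselves and annihilation operators commute among themselves: for $m,n<0$ (or both $>0$) one has $m+n\neq 0$, whence $[a_{m},a_{n}]=0$. Therefore $\exp((X(z))_{-})\exp((Y(w))_{-})=\exp((X(z))_{-}+(Y(w))_{-})$ and similarly for the plus parts, and this is exactly the factorized form of the normal-ordered exponential of the product. Assembling the three steps gives the claimed identity.

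The computation is routine once the Corollary is available; the only step that needs genuine care is verifying that $[(X(z))_{+},(Y(w))_{-}]$ is truly central rather than merely well defined, since centrality is what simultaneously licenses the Corollary and allows the prefactor $\exp(\langle X(z),Y(w)\rangle)$ to be extracted. This is secured by the standing hypothesis $[(X(z))_{+},(Y(w))_{-}]\in\mathbf{C}[[w/z]]$ together with the diagonal form $\delta_{m+n,0}$ of the commutation relation, which forces the surviving terms to pair $a_{n}$ with $a_{-n}$ and hence to collapse to scalars.
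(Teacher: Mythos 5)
Your proof is correct and takes essentially the same route the paper intends: the paper proves the Corollary $e^{A}e^{B}=e^{[A,B]}e^{B}e^{A}$, remarks that it is ``essentially the same as Wick's theorem,'' and leaves the reduction implicit, which is exactly the reduction you carry out. Your verification that $[(X(z))_{+},(Y(w))_{-}]$ collapses to the scalar series $\sum_{n>0}X_{n}Y_{-n}A(n)(w/z)^{n}$ (so the Corollary applies and the prefactor can be pulled out), together with the observation that same-sign modes commute because $\delta_{m+n,0}$ vanishes, supplies precisely the details the paper omits.
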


As an example of how to use Wick's theorem, we consider the boson algebra (6.2) and define $\eta(z)$ by
\begin{align*}
\eta(z):=\bm{:}\exp\bigg(-\sum_{n\neq{0}}(1-t^{n})a_{n}\frac{z^{-n}}{n}\bigg)\bm{:}.
\end{align*}
Let us show the following :
\begin{align*}
\eta(z)\eta(w)=\frac{(1-w/z)(1-qt^{-1}w/z)}{(1-qw/z)(1-t^{-1}w/z)}\bm{:}\eta(z)\eta(w)\bm{:}.
\end{align*}
By Wick's theorem, we have 
\begin{align*}
\eta(z)\eta(w)
&=\exp\bigg(\sum_{m>0}\sum_{n<0}(1-t^{m})(1-t^{n})[a_{m},a_{n}]\frac{z^{-m}w^{-n}}{mn}\bigg)\bm{:}\eta(z)\eta(w)\bm{:} \\
&=\exp\bigg(\sum_{m>0}\sum_{n<0}(1-t^{m})(1-t^{n})m\frac{1-q^{|m|}}{1-t^{|m|}}\delta_{m+n,0}\frac{z^{-m}w^{-n}}{mn}\bigg)\bm{:}\eta(z)\eta(w)\bm{:} \\
&=\exp\bigg(\sum_{m>0}(1-t^{m})(1-t^{-m})m\frac{1-q^{m}}{1-t^{m}}\frac{z^{-m}w^{m}}{m(-m)}\bigg)\bm{:}\eta(z)\eta(w)\bm{:} \\
&=\exp\bigg(-\sum_{m>0}(1-q^{m})(1-t^{-m})\frac{(w/z)^{m}}{m}\bigg)\bm{:}\eta(z)\eta(w)\bm{:} \\
&=\frac{(1-w/z)(1-qt^{-1}w/z)}{(1-qw/z)(1-t^{-1}w/z)}\bm{:}\eta(z)\eta(w)\bm{:},
\end{align*}
where we use $\log(1-x)=-\displaystyle \sum_{n>0}\frac{x^{n}}{n} \, (|x|<1)$.

\subsection{Appendix B : Some formulas}
In this subsection, we show some formulas which are used in this paper.

\subsubsection{Ramanujan's summation formula}
We show Ramanujan's summation formula which is used in section 4. As a preparation, we show the $q$-binomial theorem. In the following, we assume that the base $q\in{\mathbf{C}}$ satisfies $|q|<1$. Let us define
\begin{align*}
(x;q)_{\infty}:=\prod_{n\geq{0}}(1-xq^{n}), \quad (x;q)_{n}:=\frac{(x;q)_{\infty}}{(q^{n}x;q)_{\infty}} \quad (n\in{\mathbf{Z}}).
\end{align*}

\begin{prop}[\textbf{$q$-Binomial theorem}] 
For $a\in{\mathbf{C}}$, the following holds :
\begin{align*}
\frac{(az;q)_{\infty}}{(z;q)_{\infty}}=\sum_{n\geq{0}}\frac{(a;q)_{n}}{(q;q)_{n}}z^{n} \quad (|z|<1).
\end{align*}
\end{prop}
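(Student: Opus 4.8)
The plan is to use the standard functional-equation method: identify the left-hand side as a function holomorphic on the disc $|z|<1$, derive a first-order $q$-difference equation that it satisfies, and then solve the resulting recursion for its Taylor coefficients.

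First I would set $f(z):=(az;q)_{\infty}/(z;q)_{\infty}$. Since $(z;q)_{\infty}=\prod_{n\geq 0}(1-zq^{n})$ vanishes only at $z=q^{-n}$ $(n\geq 0)$, all of whose moduli are $\geq 1$ because $|q|<1$, while $(az;q)_{\infty}$ is entire in $z$, the quotient $f(z)$ is holomorphic on $|z|<1$. Hence it admits a convergent Taylor expansion $f(z)=\sum_{n\geq 0}c_{n}z^{n}$ there, with $c_{0}=f(0)=1$.

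Next I would extract the $q$-difference equation. Using the elementary factorization $(x;q)_{\infty}=(1-x)(qx;q)_{\infty}$ for both numerator and denominator, one computes
$$f(qz)=\frac{(aqz;q)_{\infty}}{(qz;q)_{\infty}}=\frac{(az;q)_{\infty}/(1-az)}{(z;q)_{\infty}/(1-z)}=\frac{1-z}{1-az}\,f(z),$$
so that $f$ satisfies $(1-az)f(qz)=(1-z)f(z)$. I would then substitute the Taylor series into this relation and compare coefficients. Since $f(qz)=\sum_{n\geq 0}c_{n}q^{n}z^{n}$, reading off the coefficient of $z^{n}$ for $n\geq 1$ in $(1-az)f(qz)=(1-z)f(z)$ gives $c_{n}q^{n}-a\,c_{n-1}q^{n-1}=c_{n}-c_{n-1}$, that is,
$$c_{n}=\frac{1-aq^{n-1}}{1-q^{n}}\,c_{n-1}.$$
Iterating from $c_{0}=1$ yields $c_{n}=\prod_{k=1}^{n}\frac{1-aq^{k-1}}{1-q^{k}}=\frac{(a;q)_{n}}{(q;q)_{n}}$, which is exactly the claimed coefficient, completing the proof.

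The computation here is entirely routine; the only point requiring a word of care is the justification that $f$ is genuinely holomorphic on $|z|<1$, so that the coefficient comparison is legitimate and the series produced has radius of convergence at least $1$. This is precisely what the location of the zeros of $(z;q)_{\infty}$ noted above guarantees, so I do not expect any substantive obstacle beyond this bookkeeping.
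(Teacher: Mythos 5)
Your proposal is correct and follows essentially the same route as the paper: both derive the $q$-difference equation $(1-az)f(qz)=(1-z)f(z)$ from the factorization $(x;q)_{\infty}=(1-x)(qx;q)_{\infty}$, compare coefficients to obtain the recursion $c_{n}=\frac{1-aq^{n-1}}{1-q^{n}}c_{n-1}$, and iterate from $c_{0}=1$. Your added justification that $f$ is holomorphic on $|z|<1$ (via the location of the zeros of $(z;q)_{\infty}$) is a small refinement the paper leaves implicit, but it does not change the argument.
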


\begin{proof}[\textit{Proof}]
First, we expand $(az;q)_{\infty}/(z;q)_{\infty}$ as follows :
\begin{align*}
\frac{(az;q)_{\infty}}{(z;q)_{\infty}}=\sum_{n\geq 0}c_{n}z^{n}.
\end{align*}
Therefore what we have to show is $c_{n}=(a;q)_{n}/(q;q)_{n}$. Then by the equation
\begin{align*}
\frac{(aqz;q)_{\infty}}{(qz;q)_{\infty}}=\frac{1-z}{1-az}\frac{(az;q)_{\infty}}{(z;q)_{\infty}},
\end{align*}
we have the following :
\begin{align*}
q^{n}c_{n}-aq^{n-1}c_{n-1}=c_{n}-c_{n-1} \Longleftrightarrow c_{n}=\frac{1-aq^{n-1}}{1-q^{n}}c_{n-1}.
\end{align*}
By using this relation repeatedly, we have
\begin{align*}
c_{n}=\frac{1-aq^{n-1}}{1-q^{n}}\frac{1-aq^{n-2}}{1-q^{n-1}}\cdots\frac{1-a}{1-q}c_{0}=\frac{(a;q)_{n}}{(q;q)_{n}}c_{0}.
\end{align*}
Then $c_{0}=1$, hence we have $c_{n}=(a;q)_{n}/(q;q)_{n}$. \quad $\Box$
\end{proof}

In the $q$-binomial theorem, setting $a=0$ we have
\begin{align}
\frac{1}{(z;q)_{\infty}}=\sum_{n\geq 0}\frac{1}{(q;q)_{n}}z^{n}.
\end{align}
Similar to the proof of the $q$-binomial theorem, Euler's formula is shown :
\begin{align}
(z;q)_{\infty}=\sum_{n\geq 0}\frac{(-1)^{n}q^{\frac{n(n-1)}{2}}}{(q;q)_{n}}z^{n}.
\end{align}

Before giving the proof of Ramanujan's summation formula, we show Jacobi's triple product formula by using Euler's formula and the $q$-binomial theorem.

\begin{prop}[\textbf{Jacobi's triple product formula}]
\begin{align}
(q;q)_{\infty}(z;q)_{\infty}(qz^{-1};q)_{\infty}=\sum_{n\in{\mathbf{Z}}}(-1)^{n}z^{n}q^{\frac{n(n-1)}{2}}.
\end{align}
\end{prop}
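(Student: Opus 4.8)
The plan is to prove the identity by regarding the infinite product on the left, stripped of its $z$-independent factor $(q;q)_{\infty}$, as a Laurent series in $z$ and pinning it down through a $q$-difference functional equation. Accordingly I set
\begin{align*}
f(z):=(z;q)_{\infty}(qz^{-1};q)_{\infty},
\end{align*}
which converges for all $z\neq 0$ and so admits a Laurent expansion $f(z)=\sum_{n\in\mathbf{Z}}c_{n}z^{n}$ whose coefficients $c_{n}=c_{n}(q)$ are the unknowns to determine.

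First I would establish the functional equation. Using the elementary shift relation $(z;q)_{\infty}=(1-z)(qz;q)_{\infty}$, immediate from the definition of the $q$-infinite product, one computes
\begin{align*}
f(qz)=(qz;q)_{\infty}(z^{-1};q)_{\infty}=\frac{(z;q)_{\infty}}{1-z}(1-z^{-1})(qz^{-1};q)_{\infty}=-z^{-1}f(z).
\end{align*}
Substituting the Laurent expansion into $f(qz)=-z^{-1}f(z)$ and matching the coefficient of $z^{n}$ yields the recursion $c_{n+1}=-q^{n}c_{n}$, valid for every $n\in\mathbf{Z}$. Solving it gives $c_{n}=(-1)^{n}q^{\frac{n(n-1)}{2}}c_{0}$, so that
\begin{align*}
(z;q)_{\infty}(qz^{-1};q)_{\infty}=c_{0}(q)\sum_{n\in\mathbf{Z}}(-1)^{n}q^{\frac{n(n-1)}{2}}z^{n}.
\end{align*}

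The remaining and genuinely delicate step is to identify the constant $c_{0}(q)$ with $1/(q;q)_{\infty}$; this is where I expect the main difficulty, since the recursion fixes the coefficients only up to the global factor $c_{0}$. I would proceed by setting $C(q):=(q;q)_{\infty}c_{0}(q)$ and showing $C(q)=1$ as follows. Letting $q\to 0$ in the displayed identity, the left-hand side tends to $1-z$ while on the right only the terms $n=0$ and $n=1$ survive, giving $C(0)=1$. To propagate this to all $q$ I would derive a self-similarity relation of the form $C(q)=C(q^{k})$ for some fixed $k\geq 2$ by a suitable substitution in the product identity, and then iterate $q\mapsto q^{k}\mapsto q^{k^{2}}\mapsto\cdots$, using continuity at $0$ to conclude $C(q)=C(0)=1$. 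Multiplying the displayed identity through by $(q;q)_{\infty}$ then gives exactly
\begin{align*}
(q;q)_{\infty}(z;q)_{\infty}(qz^{-1};q)_{\infty}=\sum_{n\in\mathbf{Z}}(-1)^{n}q^{\frac{n(n-1)}{2}}z^{n},
\end{align*}
as required. An alternative to the self-similarity argument, should it prove awkward, is to expand both $(z;q)_{\infty}$ and $(qz^{-1};q)_{\infty}$ by Euler's formula (6.11), collect the coefficient of $z^{n}$, and reduce the claim to the Durfee-square identity $\sum_{k\geq 0}q^{k(k+n)}/\big((q;q)_{k}(q;q)_{k+n}\big)=1/(q;q)_{\infty}$; but proving that auxiliary identity is itself comparable in difficulty, so I would favour the functional-equation route.
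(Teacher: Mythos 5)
Your route is genuinely different from the paper's: the paper never determines an undetermined constant at all, but instead expands $(z;q)_{\infty}$ by Euler's formula (6.9), extends the sum to $n\in\mathbf{Z}$ using $(q^{n+1};q)_{\infty}=0$ for $n<0$, applies Euler's formula a second time to $(q^{n+1};q)_{\infty}$, re-indexes $n\to n-r$, and resums the $r$-sum by the $a=0$ case of the $q$-binomial theorem (6.8); the identity then falls out in one string of equalities. The parts of your argument that you actually carry out are correct: $f(qz)=-z^{-1}f(z)$ does hold, it forces $c_{n+1}=-q^{n}c_{n}$, hence $c_{n}=(-1)^{n}q^{n(n-1)/2}c_{0}$ for all $n\in\mathbf{Z}$, and the limit $q\to 0$ does give $C(0)=1$.

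However, there is a genuine gap exactly where you flag the ``delicate'' step: the self-similarity $C(q)=C(q^{k})$ is asserted, not derived — you name neither the substitution nor the value of $k$, and this is the crux of the functional-equation proof, not a routine detail. To carry it out one must (i) reparametrize, say $q=p^{2}$, $z=-p\zeta$, so the series becomes $\sum_{n\in\mathbf{Z}}p^{n^{2}}\zeta^{n}$; (ii) set $\zeta$ equal to a primitive fourth root of unity, so the odd terms cancel in pairs and the even terms give $\sum_{m\in\mathbf{Z}}(-1)^{m}p^{4m^{2}}$, which is the $\zeta=-1$ instance of the same family with modulus $p^{4}$; and (iii) match the two product sides, which requires nontrivial infinite-product identities such as
\begin{align*}
(x;x)_{\infty}(-x;x^{2})_{\infty}=(x^{4};x^{4})_{\infty}(x^{2};x^{4})_{\infty}^{2},
\end{align*}
yielding $C(p^{2})=C(p^{8})$, i.e.\ $k=4$. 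None of this appears in your proposal, and without it you have only shown that the two sides of Jacobi's formula agree up to an unknown factor $C(q)$ with $C(0)=1$; your fallback via the Durfee-square identity stops at the same point, as you admit. So either supply step (i)--(iii) above, or adopt the paper's double-Euler argument, which is self-contained, purely formal, and sidesteps the constant problem entirely.
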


\begin{proof}[\textit{Proof}]
First, we rewrite $(z;q)_{\infty}$ as 
\begin{align*}
(z;q)_{\infty}&=\sum_{n\geq{0}}\frac{(-1)^{n}q^{\frac{n(n-1)}{2}}}{(q;q)_{n}}z^{n}=\sum_{n\geq{0}}(-1)^{n}q^{\frac{n(n-1)}{2}}\frac{(q^{n+1};q)_{\infty}}{(q;q)_{\infty}}z^{n}\\
&=\frac{1}{(q;q)_{\infty}}\sum_{n\in{\mathbf{Z}}}(-1)^{n}q^{\frac{n(n-1)}{2}}(q^{n+1};q)_{\infty}z^{n} \quad (\because \text{For $n<0$, $(q^{n+1};q)_{\infty}=0$}).
\end{align*}
Furthermore, by applying Euler's formula to $(q^{n+1};q)_{\infty}$ we have
\begin{align*}
(z;q)_{\infty}&=\frac{1}{(q;q)_{\infty}}\sum_{n\in{\mathbf{Z}}}(-1)^{n}q^{\frac{n(n-1)}{2}}\sum_{r\geq{0}}\frac{(-1)^{r}q^{\frac{r(r-1)}{2}}}{(q;q)_{r}}(q^{n+1})^{r}z^{n}\\
&=\frac{1}{(q;q)_{\infty}}\sum_{\genfrac{}{}{0pt}{1}{n\in{\mathbf{Z}}}{r\geq{0}}}(-1)^{n+r}z^{n+r}q^{\frac{(n+r)(n+r-1)}{2}}\frac{1}{(q;q)_{r}}(qz^{-1})^{r} \\
&=\frac{1}{(q;q)_{\infty}}\sum_{r\geq{0}}\frac{1}{(q;q)_{r}}(qz^{-1})^{r}\sum_{n\in\mathbf{Z}}(-1)^{n+r}z^{n+r}q^{\frac{(n+r)(n+r-1)}{2}}.
\end{align*}
Then by substitution $n \to n-r$, we have
\begin{align*}
(z;q)_{\infty}=\frac{1}{(q;q)_{\infty}}\sum_{r\geq{0}}\frac{1}{(q;q)_{r}}(qz^{-1})^{r}\sum_{n\in\mathbf{Z}}(-1)^{n}z^{n}q^{\frac{n(n-1)}{2}}
=\frac{1}{(q;q)_{\infty}(qz^{-1};q)_{\infty}}\sum_{n\in{\mathbf{Z}}}(-1)^{n}z^{n}q^{\frac{n(n-1)}{2}}.
\end{align*}
Finally we have the following :
\begin{align*}
(q;q)_{\infty}(z;q)_{\infty}(qz^{-1};q)_{\infty}=\sum_{n\in{\mathbf{Z}}}(-1)^{n}z^{n}q^{\frac{n(n-1)}{2}}. \quad \Box
\end{align*}
\end{proof}

Jacobi's triple product formula means that
\begin{align*}
\Theta_{p}(z)=(p;p)_{\infty}(z;p)_{\infty}(pz^{-1};p)_{\infty}=\sum_{n\in{\mathbf{Z}}}(-1)^{n}z^{n}p^{\frac{n(n-1)}{2}}.
\end{align*}

Next let us prove Ramanujan's summation formula. We define the bilateral series $_{1}\psi_{1}\left(\genfrac{}{}{0pt}{0}{a}{b};z\right)$ by
\begin{align}
_{1}\psi_{1}\left(\genfrac{}{}{0pt}{0}{a}{b};z\right):=\sum_{n\in{\mathbf{Z}}}\frac{(a;q)_{n}}{(b;q)_{n}}z^{n}.
\end{align}
Then we can check the relation
\begin{align*}
(a;q)_{n}&=(1-a)(1-aq)\cdots(1-aq^{n-1}),\\
(a;q)_{-n}&=\frac{1}{(1-aq^{-n})(1-aq^{-n+1})\cdots(1-aq^{-1})}=\frac{q^{\frac{n(n+1)}{2}}}{(-1)^{n}a^{n}(q/a;q)_{n}} \quad (n>0).
\end{align*}
By these relations, the series $_{1}\psi_{1}\left(\genfrac{}{}{0pt}{0}{a}{b};z\right)$ can be rewritten as follows :
\begin{align*}
\sum_{n\in{\mathbf{Z}}}\frac{(a;q)_{n}}{(b;q)_{n}}z^{n}=\sum_{n\geq{0}}\frac{(a;q)_{n}}{(b;q)_{n}}z^{n}+\sum_{n\geq{1}}\frac{(q/b;q)_{n}}{(q/a;q)_{n}}\bigg(\frac{b}{az}\bigg)^{n}.
\end{align*}
Therefore the series $_{1}\psi_{1}\left(\genfrac{}{}{0pt}{0}{a}{b};z\right)$ converges in $|a^{-1}b|<|z|<1$.

\begin{prop}[\textbf{Ramanujan's summation formula}] 
For parameters $a,b\in{\mathbf{C}}$, the following holds :
\begin{align}
\sum_{n\in{\mathbf{Z}}}\frac{(a;q)_{n}}{(b;q)_{n}}z^{n}=\frac{(az;q)_{\infty}(q/az;q)_{\infty}(b/a;q)_{\infty}(q;q)_{\infty}}{(z;q)_{\infty}(b/az;q)_{\infty}(q/a;q)_{\infty}(b;q)_{\infty}} \quad (|a^{-1}b|<|z|<1).
\end{align}
\end{prop}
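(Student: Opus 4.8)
The plan is to prove the identity by analytic continuation in the parameter $b$, taking the $q$-binomial theorem as the only nontrivial input. Write $L(b):=\sum_{n\in\mathbf{Z}}\frac{(a;q)_{n}}{(b;q)_{n}}z^{n}$ for the left-hand side and $R(b)$ for the right-hand side, fixing $a,z,q$ with $|q|<1$, $z\neq0$, $|z|<1$, and regarding $b$ as the variable in the disk $|b|<|az|$ (equivalently $|a^{-1}b|<|z|$). First I would check that $L(b)$ and $R(b)$ are both holomorphic near $b=0$. For $R(b)$ this is immediate, since the only $b$-dependent factors are $(b/a;q)_{\infty}$, $(b/az;q)_{\infty}$ and $(b;q)_{\infty}$, the latter two equalling $1$ at $b=0$ and nonvanishing nearby. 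For $L(b)$ I would use the splitting recorded just above the statement,
\[
L(b)=\sum_{n\geq0}\frac{(a;q)_{n}}{(b;q)_{n}}z^{n}+\sum_{n\geq1}\frac{(q/b;q)_{n}}{(q/a;q)_{n}}\Big(\frac{b}{az}\Big)^{n},
\]
and note that each term of the second sum tends to the finite limit $\frac{(-1)^{n}q^{n(n+1)/2}}{(q/a;q)_{n}(az)^{n}}$ as $b\to0$, because the factor $b^{-n}$ produced by $(q/b;q)_{n}$ cancels the $b^{n}$. Both series then converge uniformly on a small disk $|b|<\varepsilon$ with $\varepsilon<|az|$, so $L$ extends holomorphically across $b=0$.

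The key step is to evaluate $L$ at the points $b=q^{m+1}$, $m\in\mathbf{Z}_{\geq0}$. At such a point the negative-index terms truncate: since $1/(q^{m+1};q)_{-k}=\prod_{j=1}^{k}(1-q^{m+1-j})$ acquires the vanishing factor $1-q^{0}$ as soon as $k\geq m+1$, only the terms with $n\geq-m$ survive. Reindexing $n=k-m$ and using $(x;q)_{k-m}=(x;q)_{-m}(xq^{-m};q)_{k}$ together with $(q^{m+1};q)_{k-m}=(q^{m+1};q)_{-m}(q;q)_{k}$, I would factor
\[
L(q^{m+1})=z^{-m}\frac{(a;q)_{-m}}{(q^{m+1};q)_{-m}}\sum_{k\geq0}\frac{(aq^{-m};q)_{k}}{(q;q)_{k}}z^{k},
\]
and the inner sum is evaluated by the $q$-binomial theorem to $\frac{(aq^{-m}z;q)_{\infty}}{(z;q)_{\infty}}$. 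It then remains to verify, by a direct manipulation of $q$-shifted Pochhammer symbols, that this closed form coincides with $R(q^{m+1})$; the case $m=0$ already collapses $R(q)$ exactly to $\frac{(az;q)_{\infty}}{(z;q)_{\infty}}$, which serves as a sanity check.

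Finally, since $q^{m+1}\to0$ as $m\to\infty$, the set $\{q^{m+1}:m\geq0\}$ accumulates at $b=0$, an interior point of the common domain of holomorphy; by the identity theorem $L(b)\equiv R(b)$ near the origin, and as both sides are meromorphic in $b$ on $|b|<|az|$ the identity propagates to the full stated region, giving the claim for all admissible $z$. I expect the main obstacle to be the bookkeeping in the evaluation step: one must treat the $q$-Pochhammer symbols of negative index correctly via $(x;q)_{n}=(x;q)_{\infty}/(xq^{n};q)_{\infty}$, justify the truncation rigorously, and carry out the $q$-Pochhammer algebra matching $z^{-m}(a;q)_{-m}(q^{m+1};q)_{-m}^{-1}(aq^{-m}z;q)_{\infty}/(z;q)_{\infty}$ to the explicit product $R(q^{m+1})$. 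The analyticity estimates near $b=0$ are routine but must be stated, since the legitimacy of the continuation is what the whole argument rests upon.
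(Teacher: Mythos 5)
Your proposal is correct, but it takes a genuinely different route from the paper. The paper (following Gasper--Rahman) derives the contiguous relation $f(b)=\frac{1-b/a}{(1-b)(1-b/az)}f(qb)$ for $f(b)={}_{1}\psi_{1}\left(\genfrac{}{}{0pt}{0}{a}{b};z\right)$, iterates it to get $f(b)=\frac{(b/a;q)_{\infty}}{(b;q)_{\infty}(b/az;q)_{\infty}}f(0)$, and then pins down $f(0)$ by the single evaluation $f(q)=\frac{(az;q)_{\infty}}{(z;q)_{\infty}}$ via the $q$-binomial theorem. You instead evaluate at the whole sequence $b=q^{m+1}$, where the bilateral sum truncates to a unilateral one summable by the $q$-binomial theorem, and conclude by the identity theorem at the accumulation point $b=0$. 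Your key computation does check out: using $(a;q)_{-m}=(-1)^{m}a^{-m}q^{m(m+1)/2}/(q/a;q)_{m}$, $(q^{m+1};q)_{-m}=1/(q;q)_{m}$ and $(aq^{-m}z;q)_{m}=(-az)^{m}q^{-m(m+1)/2}(q/az;q)_{m}$, one finds
\begin{align*}
z^{-m}\frac{(a;q)_{-m}}{(q^{m+1};q)_{-m}}\frac{(aq^{-m}z;q)_{\infty}}{(z;q)_{\infty}}
=\frac{(q;q)_{m}(q/az;q)_{m}}{(q/a;q)_{m}}\frac{(az;q)_{\infty}}{(z;q)_{\infty}}=R(q^{m+1}),
\end{align*}
so the step you flagged as bookkeeping is indeed only bookkeeping. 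What the two approaches buy: the paper's argument needs just one $q$-binomial evaluation and stays close to formal series manipulation, but its iteration step silently uses $f(q^{n}b)\to f(0)$, i.e.\ continuity of $f$ at $b=0$, which it never justifies; your holomorphy analysis near $b=0$ is exactly what fills that gap, at the cost of invoking genuine complex analysis (identity theorem, meromorphic continuation on $|b|<|az|$) and of carrying out the Pochhammer matching for every $m$ rather than only $m=0$. Two minor points you should state explicitly if you write this up: the evaluation points $q^{m+1}$ lie in the disk $|b|<|az|$ only for $m$ large (which suffices, as they still accumulate at $0$), and the argument assumes $a$ generic (e.g.\ $a\notin q^{\mathbf{Z}_{>0}}$, $a\neq 0$) so that $(q/a;q)_{m}\neq 0$ --- the same genericity the paper implicitly assumes when dividing by these factors.
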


\begin{proof}[\textit{Proof}]
We are going to follow the proof due to Gasper and Rahman [4]. To prove this, we set 
\begin{align*}
f(b):={}_{1}\psi_{1}\left(\genfrac{}{}{0pt}{0}{a}{b};z\right)=\sum_{n\in{\mathbf{Z}}}\frac{(a;q)_{n}}{(b;q)_{n}}z^{n}
\end{align*}
as a function of $b$. We are going to show (6.11) by using a difference equation of $f(b)$. First, we have
\begin{align*}
_{1}\psi_{1}\left(\genfrac{}{}{0pt}{0}{a}{b};z\right)-a{}\,_{1}\psi_{1}\left(\genfrac{}{}{0pt}{0}{a}{b};qz\right)
&=\sum_{n\in{\mathbf{Z}}}\left\{\frac{(a;q)_{n}}{(b;q)_{n}}-a\frac{(a;q)_{n}}{(b;q)_{n}}q^{n}\right\}z^{n}
=\sum_{n\in{\mathbf{Z}}}\frac{(a;q)_{n+1}}{(b;q)_{n}}z^{n}\\
&=(1-b/q)\sum_{n\in{\mathbf{Z}}}\frac{(a;q)_{n+1}}{(b/q;q)_{n+1}}z^{n}
=(1-b/q)z^{-1}\sum_{n\in{\mathbf{Z}}}\frac{(a;q)_{n+1}}{(b/q;q)_{n+1}}z^{n+1}\\
&=(1-b/q)z^{-1}{}_{1}\psi_{1}\left(\genfrac{}{}{0pt}{0}{a}{b/q};z\right),
\end{align*}
therefore the following relation holds :
\begin{align*}
f(b)-(1-b/q)z^{-1}f(q^{-1}b)=a{}\,_{1}\psi_{1}\left(\genfrac{}{}{0pt}{0}{a}{b};qz\right).
\end{align*}
Taking a substitution $b \to qb$, we have the following :
\begin{align*}
f(qb)-(1-b)z^{-1}f(b)=a{}\,_{1}\psi_{1}\left(\genfrac{}{}{0pt}{0}{a}{qb};qz\right).
\end{align*}
Second, for $a{}\,_{1}\psi_{1}\left(\genfrac{}{}{0pt}{0}{a}{qb};qz\right)$, we have
\begin{align*}
a{}\,_{1}\psi_{1}\left(\genfrac{}{}{0pt}{0}{a}{qb};qz\right)
&=a\sum_{n\in{\mathbf{Z}}}\frac{(a;q)_{n}}{(qb;q)_{n}}q^{n}z^{n}\\
&=-ab^{-1}\sum_{n\in{\mathbf{Z}}}\frac{(a;q)_{n}(1-bq^{n}-1)}{(qb;q)_{n}}z^{n}\\
&=-ab^{-1}\sum_{n\in{\mathbf{Z}}}\frac{(a;q)_{n}(1-bq^{n})}{(qb;q)_{n}}z^{n}+ab^{-1}\sum_{n\in{\mathbf{Z}}}\frac{(a;q)_{n}}{(qb;q)_{n}}z^{n}\\
&=-ab^{-1}(1-b)\sum_{n\in{\mathbf{Z}}}\frac{(a;q)_{n}}{(b;q)_{n}}z^{n}+ab^{-1}f(qb)\\
&=-ab^{-1}(1-b)f(b)+ab^{-1}f(qb).
\end{align*}
Therefore we have 
\begin{align*}
f(qb)-(1-b)z^{-1}f(b)=-ab^{-1}(1-b)f(b)+ab^{-1}f(qb) 
\Longleftrightarrow \, f(b)=\frac{1-b/a}{(1-b)(1-b/az)}f(qb).
\end{align*}
By using this relation repeatedly, we have the following :
\begin{align*}
f(b)=\frac{(b/a;q)_{\infty}}{(b;q)_{\infty}(b/az;q)_{\infty}}f(0).
\end{align*}
Instead of $f(0)$, we determine $f(q)$. Then we have
\begin{align*}
f(q)&=\sum_{n\in{\mathbf{Z}}}\frac{(a;q)_{n}}{(q;q)_{n}}z^{n}=\sum_{n\geq{0}}\frac{(a;q)_{n}}{(q;q)_{n}}z^{n}\stackrel{\genfrac{}{}{0pt}{1}{\text{$q$-binomial}}{\text{theorem}}}{=}\frac{(az;q)_{\infty}}{(z;q)_{\infty}} \quad \left(\because \text{For $n<0$, \,$\frac{1}{(q;q)_{n}}=0$}\right) \\
&=\frac{(q/a;q)_{\infty}}{(q;q)_{\infty}(q/az;q)_{\infty}}f(0).
\end{align*}
Thus we have 
\begin{align*}
f(0)=\frac{(az;q)_{\infty}(q/az;q)_{\infty}(q;q)_{\infty}}{(z;q)_{\infty}(q/a;q)_{\infty}}.
\end{align*}
Consequently, we have the following :
\begin{align*}
f(b)=\sum_{n\in{\mathbf{Z}}}\frac{(a;q)_{n}}{(b;q)_{n}}z^{n}=\frac{(az;q)_{\infty}(q/az;q)_{\infty}(b/a;q)_{\infty}(q;q)_{\infty}}{(z;q)_{\infty}(b/az;q)_{\infty}(b;q)_{\infty}(q/a;q)_{\infty}}. \quad \Box
\end{align*}
\end{proof}

\subsubsection{Partial fraction expansion formula}
\begin{prop}[\textbf{Partial fraction expansion [17]}] 
Let $[u] \,(u\in\mathbf{C})$ be an entire function which satisfies the following relations :
\begin{align}
&(1) \, \text{Odd function} : [-u]=-[u], \\
&(2) \, \text{The Riemann relation} : \notag\\
&[x+z][x-z][y+w][y-w]-[x+w][x-w][y+z][y-z]=[x+y][x-y][z+w][z-w].
\end{align}
For $N\in\mathbf{Z}_{>0}$ and parameters $q_{i}, \,c_{i} \, (1\leq{i}\leq{N})$, set $c_{(N)}:=\sum_{i=1}^{N}c_{i}$. Then the following holds :
\begin{align}
\prod_{i=1}^{N}\frac{[u-q_{i}+c_{i}]}{[u-q_{i}]}=\sum_{i=1}^{N}\frac{[c_{i}]}{[c_{(N)}]}\frac{[u-q_{i}+c_{(N)}]}{[u-q_{i}]}\prod_{j\neq{i}}\frac{[q_{i}-q_{j}+c_{j}]}{[q_{i}-q_{j}]}.
\end{align}
\end{prop}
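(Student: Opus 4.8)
The plan is to fix every parameter except $u$ and regard both sides as meromorphic functions of $u$, then run a Liouville-type argument: I will check that the two sides have the same poles with the same residues and transform identically under the period shifts of $[u]$, so that their difference is a holomorphic theta function forced to vanish. Before that, two low cases orient the argument. For $N=1$ the claim reads $\frac{[u-q_1+c_1]}{[u-q_1]}=\frac{[c_1]}{[c_1]}\frac{[u-q_1+c_1]}{[u-q_1]}$, which is trivial. For $N=2$, clearing the denominators $[u-q_1][u-q_2][c_{(2)}]$ and using $[q_2-q_1]=-[q_1-q_2]$ (oddness) turns the claim into the four-fold product identity $[c_{(2)}][q_1-q_2][u-q_1+c_1][u-q_2+c_2]=[c_1][q_1-q_2+c_2][u-q_1+c_{(2)}][u-q_2]-[c_2][q_2-q_1+c_1][u-q_2+c_{(2)}][u-q_1]$, and this is precisely the Riemann relation under a suitable affine substitution of $x,y,z,w$ in terms of $u,q_1,q_2,c_1,c_2$.

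For general $N$, write $L(u)$ and $R(u)$ for the two sides. Since $[u]$ is odd it vanishes to first order at $u=0$, so for generic parameters $L$ has only simple poles, at $u\equiv q_k$, and each summand of $R$ has a simple pole only at $u\equiv q_k$. Putting $[']:=\frac{d}{du}[u]\big|_{u=0}\neq 0$, the residue of $L$ at $u=q_k$ is $\frac{[c_k]}{[']}\prod_{j\neq k}\frac{[q_k-q_j+c_j]}{[q_k-q_j]}$, while in $R$ only the $i=k$ term contributes, giving $\frac{[c_k]}{[c_{(N)}]}\cdot\frac{[c_{(N)}]}{[']}\prod_{j\neq k}\frac{[q_k-q_j+c_j]}{[q_k-q_j]}$; the factor $[c_{(N)}]$ cancels and the residues agree for every $k$. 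Next, under $u\mapsto u+1$ every ratio $[u-a+c]/[u-a]$ is unchanged, while under $u\mapsto u+\tau$ each such ratio acquires a multiplier depending only on $c$ (namely $e^{-2\pi i c}$ in the standard normalization); summing exponents, both $L$ and $R$ acquire the common factor $e^{-2\pi i c_{(N)}}$, the $L$-side from $\sum_i c_i=c_{(N)}$ and the $R$-side from the single $u$-dependent ratio $\frac{[u-q_i+c_{(N)}]}{[u-q_i]}$ in each term. Hence $H:=L-R$ is holomorphic (the simple poles cancel) and carries the same quasi-periodicities.

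Finally, expanding the $1$-periodic holomorphic function $H$ in a Fourier series in $u$, the $\tau$-quasi-periodicity forces every coefficient to vanish whenever $e^{-2\pi i c_{(N)}}$ is not of the form $e^{2\pi i n\tau}$ with $n\in\mathbf{Z}$, which holds for generic $c_{(N)}$, so $H\equiv 0$; since both sides are meromorphic in all of $u,q_i,c_i$ and agree on a dense parameter set, the identity holds identically. I expect the genuine obstacle to be structural rather than computational: one must know that oddness together with the Riemann relation force $[u]$ to be, up to a Gaussian factor that cancels uniformly inside each ratio, a bona fide theta function with simple zeros on a lattice and exactly the quasi-periodicity used above, and then dispose of the degenerate parameter values by the density argument. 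An alternative that sidesteps complex analysis is to induct on $N$, peeling off the factor indexed by $N$ and recombining the partial-fraction coefficients by repeated use of the Riemann relation; there the delicate point is the re-indexing that upgrades $c_{(N-1)}$ to $c_{(N)}$.
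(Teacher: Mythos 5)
Your $N=1$ and $N=2$ checks are fine (the $N=2$ case, cleared of denominators and reduced to the Riemann relation by an affine substitution, is exactly the paper's base case). But your main argument for general $N$ has a genuine gap, and you have named it yourself without closing it. The Liouville/Fourier argument needs $[u]$ to have simple zeros precisely on a lattice and the specific quasi-periodicities under $u\mapsto u+1$ and $u\mapsto u+\tau$; none of this is among the hypotheses. The proposition assumes only that $[u]$ is entire, odd, and satisfies the Riemann relation, and this class contains $[u]=u$ and $[u]=\sin\pi u$ (both satisfy the Riemann relation, as one checks from $\sin(x+z)\sin(x-z)=\sin^2x-\sin^2z$), for which there is no second quasi-period at all, as well as $e^{au^2}\sigma(u|\Lambda)$ for an arbitrary lattice $\Lambda$. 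To run your argument you would first have to prove the classification theorem that odd entire solutions of the Riemann relation are exactly Gaussian multiples of Weierstrass sigma functions or their rational/trigonometric degenerations, and then dispose of the degenerate cases by separate arguments; you assert this structural input (``one must know that\dots'') but never establish it, and as you state it (``a bona fide theta function with simple zeros on a lattice'') it is false for the admissible examples above. The residue bookkeeping inherits the same problem: locating \emph{all} poles of the two sides and matching residues at translated poles $u=q_k+\lambda$ presupposes knowledge of the full zero set of $[u]$ and its quasi-periodicity, which you do not have.

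The paper's proof avoids complex analysis entirely: it is an induction on $N$ whose only input is the Riemann relation. The base case is your $N=2$ identity; in the step one peels off the factor $[u-q_{N+1}+c_{N+1}]/[u-q_{N+1}]$, applies the inductive hypothesis to the remaining product, uses the $N=2$ identity to merge $\frac{[u-q_{N+1}+c_{N+1}]}{[u-q_{N+1}]}\cdot\frac{[u-q_i+c_{(N)}]}{[u-q_i]}$ into terms involving $c_{(N+1)}$, and then --- this is precisely the re-indexing you flagged as delicate --- applies the inductive hypothesis a second time, now evaluated at $u=q_{N+1}$, to collapse the sum $\sum_{i=1}^{N}\frac{[c_i]}{[c_{(N)}]}\frac{[q_{N+1}-q_i+c_{(N)}]}{[q_{N+1}-q_i]}\prod_{j\neq i}\frac{[q_i-q_j+c_j]}{[q_i-q_j]}$ back into the single product $\prod_{j=1}^{N}\frac{[q_{N+1}-q_j+c_j]}{[q_{N+1}-q_j]}$. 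That second application is the real content of the inductive step, and it is exactly what your one-sentence alternative leaves undone. As it stands, neither of your routes is a proof: the first rests on an unproved (and, for this hypothesis class, false-as-stated) structural assumption, and the second is the paper's argument minus its key step.
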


\begin{proof}[\textit{Proof}]
The proposition is shown by the Riemann relation and the induction of $N$. In the case $N=2$, we are going to show 
\begin{align*}
&\quad\frac{[u-q_{1}+c_{1}]}{[u-q_{1}]}\frac{[u-q_{2}+c_{2}]}{[u-q_{2}]} \\
&=\frac{[c_{1}]}{[c_{(2)}]}\frac{[u-q_{1}+c_{(2)}]}{[u-q_{1}]}\frac{[q_{1}-q_{2}+c_{2}]}{[q_{1}-q_{2}]}+\frac{[c_{2}]}{[c_{(2)}]}\frac{[u-q_{2}+c_{(2)}]}{[u-q_{2}]}\frac{[q_{2}-q_{1}+c_{1}]}{[q_{2}-q_{1}]}.
\end{align*}
By multiplying the both hand sides by $[c_{(2)}][u-q_{1}][u-q_{2}][q_{1}-q_{2}]$, the equation we have to show takes the form as follows :
\begin{align}
&\quad [c_{(2)}][q_{1}-q_{2}][u-q_{1}+c_{1}][u-q_{2}+c_{2}] \notag\\
&=[c_{1}][u-q_{1}+c_{(2)}][u-q_{2}][q_{1}-q_{2}+c_{2}]-[c_{2}][u-q_{2}+c_{(2)}][u-q_{1}][q_{2}-q_{1}+c_{1}].
\end{align}
Here we show (Right hand side of (6.15)) $\to$ (Left hand side of (6.15)) by the Riemann relation. Then we define $x,y,z,w$ by
\begin{align*}
x+y=c_{(2)}, \quad x-y=q_{1}-q_{2}, \quad z+w=u-q_{1}+c_{1}, \quad z-w=u-q_{2}+c_{2}.
\end{align*}
It is clear that $\text{(Left hand side of (6.15))}=[x+y][x-y][z+w][z-w]$. On the other hand, $x,y,z,w$ are
\begin{align*}
x=\frac{q_{1}-q_{2}+c_{(2)}}{2}, \quad y=\frac{q_{2}-q_{1}+c_{(2)}}{2}, \quad z=\frac{2u-q_{1}-q_{2}+c_{(2)}}{2}, \quad w=\frac{-q_{1}+q_{2}+c_{1}-c_{2}}{2},
\end{align*}
hence we have 
\begin{align*}
\text{(Right hand side of (6.15))}
&\quad=[x+w][y+z][z-y][x-w]-[y-w][x+z][z-x][y+w] \\
&\quad=[x+z][x-z][y+w][y-w]-[x+w][x-w][y+z][y-z] \\
&\stackrel{\genfrac{}{}{0pt}{1}{\text{Riemann}}{\text{relation}}}{=}[x+y][x-y][z+w][z-w],
\end{align*}
therefore (6.15) is satisfied.

Next we suppose that the relation (6.14) holds for $\exists N\geq 2$. Then in the case of $N+1$, we have the following :
\begin{align*}
&\quad \prod_{i=1}^{N+1}\frac{[u-q_{i}+c_{i}]}{[u-q_{i}]} \\
&=\frac{[u-q_{N+1}+c_{N+1}]}{[u-q_{N+1}]}\prod_{i=1}^{N}\frac{[u-q_{i}+c_{i}]}{[u-q_{i}]} \\
&=\frac{[u-q_{N+1}+c_{N+1}]}{[u-q_{N+1}]}\sum_{i=1}^{N}\frac{[c_{i}]}{[c_{(N)}]}\frac{[u-q_{i}+c_{(N)}]}{[u-q_{i}]}\prod_{1\leq{j}\leq{N},\,j\neq{i}}\frac{[q_{i}-q_{j}+c_{j}]}{[q_{i}-q_{j}]} \\
&=\sum_{i=1}^{N}\frac{[c_{i}]}{[c_{(N)}]}\bigg\{\frac{[c_{N+1}]}{[c_{(N+1)}]}\frac{[u-q_{N+1}+c_{(N+1)}]}{[u-q_{N+1}]}\frac{[q_{N+1}-q_{i}+c_{(N)}]}{[q_{N+1}-q_{i}]} \\
&\hskip 3cm +\frac{[c_{(N)}]}{[c_{(N+1)}]}\frac{[u-q_{i}+c_{(N+1)}]}{[u-q_{i}]}\frac{[q_{i}-q_{N+1}+c_{N+1}]}{[q_{i}-q_{N+1}]}\bigg\}\prod_{1\leq{j}\leq{N},\,j\neq{i}}\frac{[q_{i}-q_{j}+c_{j}]}{[q_{i}-q_{j}]} \\
&=\frac{[c_{N+1}]}{[c_{(N+1)}]}\frac{[u-q_{N+1}+c_{(N+1)}]}{[u-q_{N+1}]}\sum_{i=1}^{N}\frac{[c_{i}]}{[c_{(N)}]}\frac{[q_{N+1}-q_{i}+c_{(N)}]}{[q_{N+1}-q_{i}]}\prod_{1\leq{j}\leq{N},\,j\neq{i}}\frac{[q_{i}-q_{j}+c_{j}]}{[q_{i}-q_{j}]} \\
&\hskip 3cm +\sum_{i=1}^{N}\frac{[c_{i}]}{[c_{(N+1)}]}\frac{[u-q_{i}+c_{(N+1)}]}{[u-q_{i}]}\prod_{1\leq{j}\leq{N+1},\,j\neq{i}}\frac{[q_{i}-q_{j}+c_{j}]}{[q_{i}-q_{j}]}.
\end{align*} 
From the hypothesis of the induction, we have
\begin{align*}
\sum_{i=1}^{N}\frac{[c_{i}]}{[c_{(N)}]}\frac{[q_{N+1}-q_{i}+c_{(N)}]}{[q_{N+1}-q_{i}]}\prod_{1\leq{j}\leq{N},\,j\neq{i}}\frac{[q_{i}-q_{j}+c_{j}]}{[q_{i}-q_{j}]}
=\prod_{j=1}^{N}\frac{[q_{N+1}-q_{j}+c_{j}]}{[q_{N+1}-q_{j}]}.
\end{align*}
Therefore we have 
\begin{align*}
\prod_{i=1}^{N+1}\frac{[u-q_{i}+c_{i}]}{[u-q_{i}]}
&=\frac{[c_{N+1}]}{[c_{(N+1)}]}\frac{[u-q_{N+1}+c_{(N+1)}]}{[u-q_{N+1}]}\prod_{j=1}^{N}\frac{[q_{N+1}-q_{j}+c_{j}]}{[q_{N+1}-q_{j}]} \\
&\qquad+\sum_{i=1}^{N}\frac{[c_{i}]}{[c_{(N+1)}]}\frac{[u-q_{i}+c_{(N+1)}]}{[u-q_{i}]}\prod_{1\leq{j}\leq{N+1},\,j\neq{i}}\frac{[q_{i}-q_{j}+c_{j}]}{[q_{i}-q_{j}]} \\
&=\sum_{i=1}^{N+1}\frac{[c_{i}]}{[c_{(N)}]}\frac{[u-q_{i}+c_{(N)}]}{[u-q_{i}]}\prod_{1\leq{j}\leq{N+1},\,j\neq{i}}\frac{[q_{i}-q_{j}+c_{j}]}{[q_{i}-q_{j}]},
\end{align*}
then this shows that the relation (6.14) holds in the case of $N+1$. By the induction the proof of the proposition 6.7 is complete. \quad $\Box$
\end{proof}

The proposition 6.7 is written by additive variables. Let us rewrite it by multiplicative variables. The theta function is defined by
\begin{align*}
\Theta_{p}(x)=(p;p)_{\infty}(x;p)_{\infty}(px^{-1};p)_{\infty}.
\end{align*}

\begin{prop}[\textbf{The Riemann relation of the theta function $\Theta_{p}(x)$}] 
For the theta function $\Theta_{p}(x)$, the Riemann relation holds as follows :
\begin{align}
&\Theta_{p}(xz)\Theta_{p}(x/z)\Theta_{p}(yw)\Theta_{p}(y/w)-\Theta_{p}(xw)\Theta_{p}(x/w)\Theta_{p}(yz)\Theta_{p}(y/z) \notag\\
&\hskip 7cm =\frac{y}{z}\Theta_{p}(xy)\Theta_{p}(x/y)\Theta_{p}(zw)\Theta_{p}(z/w).
\end{align}
\end{prop}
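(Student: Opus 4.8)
The plan is to treat both sides of (6.16) as holomorphic functions of the single variable $z\in\mathbf{C}^{\times}$, holding $x,y,w$ fixed and generic, and to reduce the identity to a statement about elliptic functions via the quasi-periodicity of $\Theta_{p}$. First I would record, from the relation $\Theta_{p}(px)=-x^{-1}\Theta_{p}(x)$ in the preamble (and its consequence $\Theta_{p}(p^{-1}u)=-p^{-1}u\,\Theta_{p}(u)$), how each factor transforms under $z\mapsto pz$. A short computation gives $\Theta_{p}(xz)\Theta_{p}(x/z)\mapsto (pz^{2})^{-1}\Theta_{p}(xz)\Theta_{p}(x/z)$ and likewise for $\Theta_{p}(yz)\Theta_{p}(y/z)$, so the left-hand side $L(z)$ obeys $L(pz)=(pz^{2})^{-1}L(z)$. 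The right-hand side $R(z)$ transforms identically: the prefactor $y/z$ contributes $p^{-1}$ while $\Theta_{p}(zw)\Theta_{p}(z/w)\mapsto z^{-2}\Theta_{p}(zw)\Theta_{p}(z/w)$, so again $R(pz)=(pz^{2})^{-1}R(z)$.

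Consequently the quotient $L(z)/R(z)$ is invariant under $z\mapsto pz$ and descends to a function on the torus $\mathbf{C}^{\times}/p^{\mathbf{Z}}$. Next I would locate the zeros: since $\Theta_{p}(u)=0$ exactly when $u\in p^{\mathbf{Z}}$, the denominator $R(z)$ vanishes in a fundamental annulus precisely at $z=w$ (from $\Theta_{p}(z/w)$) and at $z=w^{-1}$ (from $\Theta_{p}(zw)$), both simple. A direct substitution shows $L(z)$ vanishes at the same two points: at $z=w$ the two products defining $L$ coincide and cancel, and the symmetry $xz\leftrightarrow x/z$, $yz\leftrightarrow y/z$ forces the same cancellation at $z=w^{-1}$. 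Hence $L/R$ has no poles, so it is holomorphic on the compact torus and therefore constant by the elliptic analogue of Liouville's theorem.

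Finally I would fix the constant to be $1$. Using $(x;p)_{\infty}=(1-x)(px;p)_{\infty}$ one gets the local behaviour $\Theta_{p}(u)\sim(p;p)_{\infty}^{3}(1-u)$ as $u\to 1$; dividing both $L(z)$ and $R(z)$ by $\Theta_{p}(z/w)$ and letting $z\to w$ produces the leading coefficients on each side, whose equality determines the constant, the target value from $R$ being $\tfrac{y}{w}\Theta_{p}(xy)\Theta_{p}(x/y)\Theta_{p}(w^{2})$. The main obstacle I anticipate is exactly this last step: extracting the first-order Taylor coefficient of $L(z)$ at $z=w$ (where the naive cancellation makes $L$ vanish) and verifying it matches the value from $R$ is the only genuinely computational part. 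A secondary point is the degenerate configuration $w^{2}\in p^{\mathbf{Z}}$, in which the two simple zeros collide into a double zero; this I would dispose of by a continuity argument, since both sides are analytic in the parameters and the identity already holds on the open dense set where $w$ and $w^{-1}$ are distinct modulo $p^{\mathbf{Z}}$.
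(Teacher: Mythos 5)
Your scheme is, up to a change of variable, the paper's own argument: the paper also forms the ratio of the two sides (as a function of $x$ rather than $z$), checks invariance under multiplication by $p$ via $\Theta_{p}(px)=-x^{-1}\Theta_{p}(x)$, rules out poles on a fundamental annulus, and concludes constancy by Liouville's theorem. Your quasi-periodicity computation, the location of the zeros of $R$, the matching zeros of $L$, and the Liouville step are all correct. The genuine gap is the normalization of the constant, which you yourself flag as the ``main obstacle'' and do not carry out -- and the route you propose cannot be completed by the means you describe. Writing $L(z)=A(z)B(w)-A(w)B(z)$ with $A(z):=\Theta_{p}(xz)\Theta_{p}(x/z)$, $B(z):=\Theta_{p}(yz)\Theta_{p}(y/z)$, and using $\Theta_{p}(u)\sim (p;p)_{\infty}^{3}(1-u)$, your plan requires evaluating
\begin{align*}
\lim_{z\to w}\frac{L(z)}{\Theta_{p}(z/w)}
=-\frac{w}{(p;p)_{\infty}^{3}}\bigl(A'(w)B(w)-A(w)B'(w)\bigr)
\end{align*}
and matching it with the value $\tfrac{y}{w}\Theta_{p}(xy)\Theta_{p}(x/y)\Theta_{p}(w^{2})$ coming from $R$; equivalently, you must prove the Wronskian identity
\begin{align*}
A'(w)B(w)-A(w)B'(w)=-\frac{y\,(p;p)_{\infty}^{3}}{w^{2}}\,\Theta_{p}(xy)\Theta_{p}(x/y)\Theta_{p}(w^{2}).
\end{align*}
But this is precisely the $z$-derivative of (6.16) at $z=w$; given the already-established constancy of $L/R$, it is \emph{equivalent} to the Riemann relation itself. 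Moreover logarithmic derivatives of $\Theta_{p}$ at generic points do not reduce to theta functions, so there is no ``routine computation'' that produces it. As proposed, the last step is circular.

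The missing idea is to normalize at a point where one of the two terms of $L$ vanishes for free, exploiting $\Theta_{p}(1)=0$. Take $z=y$ (parameters generic): the subtracted term of $L$ contains the factor $\Theta_{p}(y/y)=\Theta_{p}(1)=0$, so $L(y)=\Theta_{p}(xy)\Theta_{p}(x/y)\Theta_{p}(yw)\Theta_{p}(y/w)$, while $R(y)=(y/y)\,\Theta_{p}(xy)\Theta_{p}(x/y)\Theta_{p}(yw)\Theta_{p}(y/w)$ is the same nonzero quantity; hence the constant is $1$ with no computation at all. This is exactly the paper's trick: there the ratio is viewed as a function of $x$ and evaluated at $x=w$, where $\Theta_{p}(1)$ kills the second term. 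With this one-line replacement of your normalization step, your proof is complete, and your continuity remark disposes of the degenerate configurations just as stated.
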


\begin{proof}[\textit{Sketch of the proof}]
For $x\in{\mathbf{C}}$, we set $f(x)$ as the ratio of the right hand side of (6.16) and the left hand side of (6.16) :
\begin{align}
f(x):=\frac{(y/z)\Theta_{p}(xy)\Theta_{p}(x/y)\Theta_{p}(zw)\Theta_{p}(z/w)}{\Theta_{p}(xz)\Theta_{p}(x/z)\Theta_{p}(yw)\Theta_{p}(y/w)-\Theta_{p}(xw)\Theta_{p}(x/w)\Theta_{p}(yz)\Theta_{p}(y/z)}. 
\end{align}
Then we can check $f(px)=f(x)$ by the property of the theta function $\Theta_{p}(px)=-x^{-1}\Theta_{p}(x)$. Besides $f(x)$ has no poles in the region $|p|\leq|x|\leq1$. This shows that $f(x)$ is bounded on $\mathbf{C}^{\times}$. By the Liouville theorem $f(x)$ is constant, i.e. the ratio (6.17) does not depends on $x$. Since the ratio (6.17) does not depends on $x$, we have $f(x)=f(w)=1$. Therefore (6.16) holds. \quad $\Box$
\end{proof}

For a variable $z\in{\mathbf{C}}$, we define the additive variable $u\in{\mathbf{C}}$ as $z=e^{2\pi iu}$. In this notation, we set $[u]$ as
\begin{align}
[u]:=-z^{-1/2}\Theta_{p}(z).
\end{align}
Using this notation, the Riemann relation of the theta function (6.16) is rewritten to the form (6.13). Consequently, we have 

\begin{prop}
Set $z,\,x_{j},\,t_{j} \,(1\leq{j}\leq{N})$ by
\begin{align*}
z:=e^{2\pi iu}, \quad x_{j}:=e^{2\pi iq_{j}}, \quad t_{j}:=e^{2\pi ic_{j}}, \quad t_{(N)}:=t_{1}t_{2}\cdots t_{N},
\end{align*}
where $u,\,q_{j},\,c_{j}\,(1\leq{j}\leq{N})$ are variable and parameters in the proposition 6.7. From (6.14), a product of the theta function is decomposed into the partial fraction as follows :
\begin{align}
\prod_{i=1}^{N}\frac{\Theta_{p}(t_{i}x_{i}^{-1}z)}{\Theta_{p}(x_{i}^{-1}z)}=\sum_{i=1}^{N}\frac{\Theta_{p}(t_{i})}{\Theta_{p}(t_{(N)})}\frac{\Theta_{p}(t_{(N)}x_{i}^{-1}z)}{\Theta_{p}(x_{i}^{-1}z)}\prod_{j\neq{i}}\frac{\Theta_{p}(t_{j}x_{i}/x_{j})}{\Theta_{p}(x_{i}/x_{j})}.
\end{align}
\end{prop}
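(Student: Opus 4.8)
The plan is to derive the multiplicative identity (6.19) from the additive partial fraction expansion (6.14) of the proposition 6.7 purely by the substitution (6.18), so that the only genuine work is bookkeeping the half-integer powers of the $t_i$ that the normalization $[u]=-z^{-1/2}\Theta_{p}(z)$ introduces. No new analytic input is required: once (6.14) and (6.18) are granted, the argument is entirely formal.

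First I would record how a single elementary ratio transforms. Writing $z=e^{2\pi iu}$, $x_j=e^{2\pi iq_j}$, $t_j=e^{2\pi ic_j}$, the factor $[u-q_i+c_i]$ has multiplicative argument $t_ix_i^{-1}z$, so by (6.18) one has $[u-q_i+c_i]=-(t_ix_i^{-1}z)^{-1/2}\Theta_{p}(t_ix_i^{-1}z)$ while $[u-q_i]=-(x_i^{-1}z)^{-1/2}\Theta_{p}(x_i^{-1}z)$; the square-root prefactors combine to give
\[
\frac{[u-q_i+c_i]}{[u-q_i]}=t_i^{-1/2}\frac{\Theta_{p}(t_ix_i^{-1}z)}{\Theta_{p}(x_i^{-1}z)}.
\]
In exactly the same way I would compute $[c_i]/[c_{(N)}]=t_i^{-1/2}t_{(N)}^{1/2}\,\Theta_{p}(t_i)/\Theta_{p}(t_{(N)})$, then $[u-q_i+c_{(N)}]/[u-q_i]=t_{(N)}^{-1/2}\,\Theta_{p}(t_{(N)}x_i^{-1}z)/\Theta_{p}(x_i^{-1}z)$, and finally $[q_i-q_j+c_j]/[q_i-q_j]=t_j^{-1/2}\,\Theta_{p}(t_jx_i/x_j)/\Theta_{p}(x_i/x_j)$, each conversion producing one half-power of a $t$ variable.

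Second I would substitute these into (6.14). The left-hand side produces the scalar $\prod_i t_i^{-1/2}=t_{(N)}^{-1/2}$ times $\prod_i\Theta_{p}(t_ix_i^{-1}z)/\Theta_{p}(x_i^{-1}z)$. On the right-hand side the $i$-th summand collects the prefactors $t_i^{-1/2}t_{(N)}^{1/2}$ from $[c_i]/[c_{(N)}]$, $t_{(N)}^{-1/2}$ from the $[u-q_i+c_{(N)}]$ ratio, and $\prod_{j\neq i}t_j^{-1/2}=t_{(N)}^{-1/2}t_i^{1/2}$ from the product over $j\neq i$; their product is $t_{(N)}^{-1/2}$, independently of the summation index $i$. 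Since this common factor $t_{(N)}^{-1/2}$ matches the one on the left, it cancels and what remains is precisely (6.19).

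The step I expect to require the most care is exactly this cancellation. One must verify that the half-powers $t_i^{\pm1/2}$ attached to the three pieces of each right-hand summand collapse to the single $i$-independent constant $t_{(N)}^{-1/2}$ carried by the left, since otherwise the identity would survive only up to a spurious root-of-$t$ factor; the decisive bookkeeping is that the $t_i^{-1/2}$ coming from $\Theta_{p}(t_i)$ is cancelled by the $t_i^{1/2}$ coming from the missing $j=i$ term in $\prod_{j\neq i}$, leaving only powers of $t_{(N)}$. Everything else is routine rewriting.
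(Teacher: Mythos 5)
Your proposal is correct and follows essentially the same route as the paper: the paper applies Proposition 6.7 to $[u]=-z^{-1/2}\Theta_{p}(z)$ (having just noted that the Riemann relation (6.16) becomes (6.13) in this notation) and simply asserts the resulting multiplicative identity, leaving the conversion implicit. Your explicit bookkeeping of the half-powers $t_i^{\pm 1/2}$ and their cancellation to a common factor $t_{(N)}^{-1/2}$ is exactly the computation the paper's ``Consequently, we have'' suppresses.
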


The theta function $\Theta_{p}(x)$ satisfies $\Theta_{p}(x) \xrightarrow[p \to 0]{} 1-x$. From the trigonometric limit of (6.19), hence we have
\begin{align}
\prod_{i=1}^{N}\frac{1-t_{i}x_{i}^{-1}z}{1-x_{i}^{-1}z}=\sum_{i=1}^{N}\frac{1-t_{i}}{1-t_{(N)}}\frac{1-t_{(N)}x_{i}^{-1}z}{1-x_{i}^{-1}z}\prod_{j\neq{i}}\frac{1-t_{j}x_{i}/x_{j}}{1-x_{i}/x_{j}}.
\end{align}

\end{document}